\patchcmd{\section}{\scshape}{\bfseries}{}{}
\renewcommand{\@secnumfont}{\bfseries}
\theoremstyle{plain}
\newcommand{\FF}{\mathbb{F}}    
\newcommand{\NN}{\mathbb{N}} 
\newcommand{\wild}{\mathbb{L}}
\newcommand{\NI}{\operatorname{NI}} 
\newcommand{\Rep}{\operatorname{Rep}}  
\newcommand{\nil}{\operatorname{nil}}
\newcommand{\Hom}{\operatorname{Hom}}
\theoremstyle{definition}
\newtheorem{mydef}{\textbf{Definition}}[section]
\newtheorem{myeg}[mydef]{\textbf{Example}}
\newtheorem{question}[mydef]{\textbf{Question}}
\newtheorem{rmk}[mydef]{\textbf{Remark}}
\newtheorem{construction}[mydef]{\textbf{Construction}}
\theoremstyle{plain}
\newtheorem*{nothma}{\textbf{Theorem A}}
\newtheorem*{nothmb}{\textbf{Theorem B}}
\newtheorem*{nothmc}{\textbf{Theorem C}}
\newtheorem{mythm}[mydef]{\textbf{Theorem}}
\newtheorem{lem}[mydef]{\textbf{Lemma}}
\newtheorem{pro}[mydef]{\textbf{Proposition}}
\newtheorem{cor}[mydef]{\textbf{Corollary}}
\tikzset{main node/.style={circle,fill=black,draw,minimum size=0.3cm,inner sep=0pt},
}
\begin{document}

	\title{On the combinatorics of $\FF_1$-representations of pseudotree quivers}

	\author{Jaiung Jun}
	\address{State University of New York at New Paltz, NY, USA}
	\curraddr{}
	\email{junj@newpaltz.edu}
	
	\author{Jaehoon Kim}
	\address{Department of Mathematical Sciences, KAIST, South Korea.}
	\curraddr{}
	\email{jaehoon.kim@kaist.ac.kr}
	
	\author{Alex Sistko}
	\address{Manhattan College, NY, USA}
	\curraddr{}
	\email{asistko01@manhattan.edu}

	\makeatletter
	\@namedef{subjclassname@2020}{%
		\textup{2020} Mathematics Subject Classification}
	\makeatother
	
	\subjclass[2020]{Primary 16G20; Secondary 05E10, 16G60, 17B35}
	\keywords{quiver representation, coefficient quiver, covering, quiver Grassmannian, Lie algebra, pseudotree}

	\maketitle

	
\begin{abstract}
We investigate quiver representations over $\FF_1$. Coefficient quivers are combinatorial gadgets equivalent to $\FF_1$-representations of quivers. We focus on the case when the quiver $Q$ is a pseudotree. For such quivers, we first use the notion of coefficient quivers to provide a complete classification of asymptotic behaviors of indecomposable representations over $\FF_1$. Then, we prove some fundamental structural results about the Lie algebras associated to pseudotrees. Finally, we construct examples of $\FF_1$-representations $M$ of a quiver $Q$ by using coverings, under which the Euler characteristics of the quiver Grassmannians $\textrm{Gr}^Q_{\underline{d}}(M)$ can be computed in a purely combinatorial way. 
\end{abstract}

	
\section{Introduction}

Quivers are finite directed graphs, and a representation of a quiver $Q$ over a field $k$ is a collection $\mathbb{V}=\{V_i,f_\alpha\}_{i \in Q_0,\alpha \in Q_1}$ of $k$-vector spaces $V_i$ and $k$-linear maps $f_\alpha$, where $Q_0$ (resp.~$Q_1$) is the set of vertices (resp.~arrows). Quivers and their representations are natural objects which arise in various areas of mathematics. For instance, a quiver Grassmannian is a certain moduli space of quiver representations generalizing a Grassmannian (which correspond to the case when the quiver has a single vertex and no arrows). Quiver Grassmannians are projective varieties, and in \cite{reineke2013every}, Reineke proved that any projective variety is indeed a quiver Grassmannian.

The notation $\FF_1$ (``the field with one element'') denotes an idea than a well-defined mathematical object. Depending on literature, algebraic geometry over $\FF_1$ could mean several things. Sometimes, it refers to algebraic geometry over monoids (hence the theory of monoid schemes, for instance, by  Deitmar \cite{deitmar2008f1,Deitmar}, Soul\'e \cite{soule2004varietes}, Connes-Consani-Marcolli \cite{connes2009fun,con1,con2}, and Corti\~{n}as-Haesemayer-Walker-Weibel \cite{cortinas2015toric}). Sometimes it means algebraic geometry over semirings; in the case of idempotent semirings, it makes its connection to tropical geometry \cite{giansiracusa2016equations}. Over the natural numbers $\mathbb{N}$, algebraic geometry over $\mathbb{N}$ brings positivity at a very elementary level \cite{borger2016witt}.

Szczesny \cite{szczesny2011representations} first introduced a notion of quiver representations over $\FF_1$ by considering it as a degenerated combinatorial model of the category $\textrm{Rep}(Q,\FF_q)$ of representations of $Q$ over $\FF_q$; $\FF_q$-vector spaces are replaced by $\FF_1$-vector spaces (finite pointed sets), and $\FF_q$-linear maps are replaced by $\mathbb{F}_1$-linear maps (pointed set maps $f:(V,0_V) \to (W,0_W)$ such that $f|_{V\backslash f^{-1}(\{0_W\})}$ is injective). Szczesny proved that the category $\textrm{Rep}(Q,\FF_1)$ satisfies several nice properties. He further studied the Hall algebra associated to $\textrm{Rep}(Q,\FF_1)$ and its relation to Kac-Moody algebras by seeing it as a degenerate version (at $q=1$) of the theorems by Ringel \cite{ringel1990hall} and Green \cite{green1995hall}.

Despite its finite and combinatorial flavor, one can always obtain functorially a representation of $Q$ over a field $k$ from a representation of $Q$ over $\FF_1$: a finite pointed set $(V,0_V)$ defines a vector space $V_k$ whose basis is $V-\{0_V\}$. This induces a functor which is faithful (but not full in general):
\begin{equation}\label{eq: base change}
	k\otimes_{\FF_1}-:\textrm{Rep}(Q,\FF_1) \to \textrm{Rep}(Q,k).
\end{equation}

In \cite{jun2020quiver, jun2021coefficient}, the first and third authors introduced a notion of coefficient quivers for representations of a quiver over $\FF_1$, and interpreted representations $\mathbb{V} \in \textrm{Rep}(Q,\FF_1)$ as certain quiver maps $\Gamma_{\mathbb{V}} \to Q$, where $\Gamma_{\mathbb{V}}$ is a quiver constructed from $\mathbb{V}$. The quiver $\Gamma_{\mathbb{V}}$ (along with its structure map to $Q$) is said to be a \emph{coefficient quiver} of $Q$ due to its resemblance to the coefficient quivers defined by Ringel \cite{ringel1998exceptional}. The notion of coefficient quivers is our key ingredient to translate various questions on quiver representations of $Q$ over $\FF_1$ (and its extension to $\FF_q$ or $\mathbb{C}$) into purely combinatorial questions. 

\begin{myeg}\label{example: colored quiver}
	Consider the following quiver:
	\[
	Q=	\left( \begin{tikzcd}
		\bullet \arrow[loop left,looseness=15,"\alpha_1"]
		\arrow[loop right, looseness=15,"\alpha_2"]
	\end{tikzcd} \right)
	\]
	Let $\mathbb{V}=\{V_0,f_{\alpha_1},f_{\alpha_2}\}$ be an $\FF_1$-representation of $Q$, where $V_0=\{0_{V_0},1,2,3\}$, $f_{\alpha_1}(n)=\max\{0,n-1\}$, and $f_{\alpha_2}(n)=\max\{0,n-2\}$. Then, one can encode $\mathbb{V}$ as a ``colored quiver'' $\Gamma_\mathbb{V}$. The vertices are nonzero elements of $V_0$ and the arrows tell us where each element is mapped to by $f_{\alpha_1}$ (in dashed blue) and $f_{\alpha_2}$ (in red):
	\[
	\Gamma_\mathbb{V}=\left(\begin{tikzcd}[arrow style=tikz,>=stealth,row sep=1.5em, column sep=1em]
		1 
		&& 2 \arrow[ll,swap, dashed, blue]\\
		& 3 
		\arrow[ul, red]
		\arrow[ur,dashed, blue]
	\end{tikzcd} \right)
	\]
	The coloring of $\Gamma_\mathbb{V}$ can be encoded as the structure map to $Q$ as follows:
	\begin{equation}\label{eq: structure map}
		F: \quad \left(\begin{tikzcd}[arrow style=tikz,>=stealth,row sep=1.5em,column sep=1em]
			1 
			&& 2 \arrow[ll,swap,"B_1"]\\
			& 3 
			\arrow[ul,"R"]
			\arrow[ur,swap,"B_2"]
		\end{tikzcd} \right)   \longrightarrow  \left( \begin{tikzcd}
			\bullet \arrow[loop left,looseness=15,"\alpha_1"]
			\arrow[loop right, looseness=15,"\alpha_2"]
		\end{tikzcd} \right)
	\end{equation}
	where all vertices of $\Gamma_\mathbb{V}$ map to the vertex of $Q$, $R$ maps to $\alpha_2$, and $B_1,B_2$ map to $\alpha_1$. 
\end{myeg}

In fact, our motivation for studying quiver representations over $\FF_1$ is in line of Tits' initial motivation in search for $\FF_1$  \cite{tits1956analogues}; finding some combinatorial cores (if exist) of geometric structures. To be precise, we aim to develop theory of quiver representations over $\FF_1$ to approach problems for quiver representations over $\mathbb{C}$ (or $\FF_q$) via the base change functor \eqref{eq: base change}. For instance, in \cite{jun2021coefficient}, we employ the techniques in \cite{irelli2011quiver} and \cite{haupt2012euler} to compute Euler characteristics of quiver Grassmannians by simply counting (successor-closed) subquivers of a coefficient quiver. In \cite{jun2021coefficient}, we prove that for several classes of quiver Grassmannians, one can compute Euler characteristic in a purely combinatorial way by ``counting $\FF_1$-rational points''. Note that our results strictly include tree and band modules. See Example \ref{example: not band}.
\medskip

In \cite{jun2020quiver}, inspired by tame-wild dichotomy in representation theory of finite dimensional algebras, the first and third authors introduced a growth function counting isomorphism classes of indecomposable, nilpotent representations (Definition \ref{definition: indecomposables growth}). This growth function defines an order $\leq_{nil}$ on quivers, and hence define an equivalence relation $\approx_{\nil}$ on quivers. It was proved in \cite{jun2020quiver} that trees, the one loop quiver $\wild_1$, and two loop quiver $\wild_2$ belong to different equivalence classes. More precisely, the trees form the equivalence class of connected quivers with finite representation type over $\mathbb{F}_1$, and $\wild_1$ along with the type $\tilde{\mathbb{A}}$ affine Dynkin quivers form the equivalence class of all connected quivers with bounded representation type over $\mathbb{F}_1$. Furthermore, for any quiver $Q$ one has $Q \leq_{nil} \wild_2$ and $Q\approx_{\nil} \wild_2$ if $\operatorname{rank}H_1(Q,\mathbb{Z}) \geq 2$. What remained was the unknown territory of proper pseudotrees, i.e. the connected $Q$ that are not of type $\tilde{\mathbb{A}}$ but which still satisfy $H_1(Q,\mathbb{Z})\cong \mathbb{Z}$. It was known that such quivers were all strictly greater than $\wild_1$ and bounded above by $\wild_2$: it was not clear whether any were equivalent to one another, or to $\wild_2$.

In Section \ref{section: growth}, we completely classify quivers upto the equivalence relation $\approx_{\nil}$. We do this by recursively counting the coefficient quivers $\Gamma$ (of a fixed number of vertices) of a proper pseudotree $Q$. As a result, we prove the following.

\begin{nothma}[Corollary \ref{corollary: main cor}]
Let $Q$ be a proper pseudotree. Then 
\[
Q\approx_{\nil} Q', \textrm{ where } 	Q'=\begin{tikzcd}
	\bullet \arrow[r] &	\bullet 
	\arrow[out=30,in=-30,loop]
\end{tikzcd}
\]
\end{nothma}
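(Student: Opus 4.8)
The plan is to deduce Theorem A from a two-sided exponential estimate on the growth function. Write $g_Q(n)$ for the number of isomorphism classes of indecomposable nilpotent $\FF_1$-representations of $Q$ of total dimension $n$. By the dictionary between $\FF_1$-representations and coefficient quivers (an $\FF_1$-representation is indecomposable, resp.\ nilpotent, exactly when its coefficient quiver is connected, resp.\ acyclic), $g_Q(n)$ equals the number of isomorphism classes of connected acyclic coefficient quivers $\Gamma \to Q$ with $|\Gamma_0| = n$. A direct count shows that $g_{Q'}(n)$ has exponential order (indeed $g_{Q'}(n)$ is a Fibonacci number for $n \ge 2$). Hence it suffices to prove that for a proper pseudotree $Q$ there are constants $1 < a \le b$, depending only on $Q$, with $a^n \le g_Q(n) \le b^n$ for all large $n$: the lower bound gives $Q' \le_{\nil} Q$ and the upper bound gives $Q \le_{\nil} Q'$, so $Q \approx_{\nil} Q'$.

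For the lower bound I would exhibit enough coefficient quivers of $Q$ explicitly. Let $C$ be the unique cycle of $Q$, and let $\gamma$ be a pendant arrow (one incident to a leaf of $Q$; it exists because $Q$ is not of type $\tilde{\mathbb{A}}$). For each $k$ there is a connected acyclic coefficient quiver $P_k \to C$ that winds $k$ times around $C$, a ``walk of winding number $k$'' on $\Theta(k)$ vertices, with $\Theta(k)$ of these lying over a fixed vertex from which a copy of $\gamma$ can be hung. For every subset $S$ of those vertices, attach at each vertex of $S$ a fixed small coefficient quiver terminating in a new $\gamma$-leaf. Since $P_k$ has an automorphism group of bounded size, distinct pairs $(k,S)$ yield pairwise non-isomorphic coefficient quivers of $Q$, and counting those with $n$ vertices produces at least $\binom{\Theta(n)}{\Theta(n)/2} \ge a^n$ of them. (This is essentially the mechanism behind the already-known inequality $\wild_1 <_{\nil} Q$; here one only needs that the pendant arrow provides genuinely free binary choices along the winding.)

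The upper bound is the technical heart, carried out in Section \ref{section: growth} by a recursion on $|\Gamma_0|$. The idea is to split a connected acyclic coefficient quiver $\Gamma \to Q$ into (i) the sub-coefficient-quiver $\Gamma_C$ consisting of the arrows lying over $C$, and (ii) the connected components obtained after deleting those arrows, each of which is a connected coefficient quiver over a component of the forest $Q \smallsetminus C$, i.e.\ over a tree. Since a cycle has bounded representation type over $\FF_1$ (it is of type $\tilde{\mathbb{A}}$, or a single loop $\cong \wild_1$), each connected component of $\Gamma_C$ is one of boundedly many ``band-type'' pieces of each given size, so there are at most $c_1^{\,n}$ possibilities for $\Gamma_C$. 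Since a tree has finite representation type over $\FF_1$, each piece in (ii) has at most $M = M(Q)$ vertices and is one of finitely many types, so $\Gamma$ contains at least $n/M$ such pieces, there are at most $c_2^{\,n}$ choices of them, and at most $c_3^{\,n}$ ways to glue them back onto $\Gamma_C$ (each vertex of $\Gamma_C$ absorbing a bounded amount of decoration). Organising this as a vertex-by-vertex recursion shows that each step branches in a bounded number of ways, yielding $g_Q(n) \le b^n$.

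I expect the main obstacle to be precisely the bookkeeping in passing from (ii) back to (i) in the upper bound: turning the rough decomposition above into an honest recursion with constant branching factor, avoiding double-counting of coefficient quivers containing many repeated tree-pieces, and checking that the gluing data is genuinely counted by an exponential rather than by something like $n^{\Theta(n)}$. (The same recursion presumably also pins down the exact asymptotics of $g_Q(n)$, which is the content of the ``classification of asymptotic behaviors'' advertised in the abstract, but only the two-sided bound is needed for Theorem A.) Granting both bounds, one has $Q' \le_{\nil} Q \le_{\nil} Q'$, hence $Q \approx_{\nil} Q'$, which is the assertion of Theorem A.
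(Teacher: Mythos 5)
Your overall strategy (a two-sided exponential estimate, the lower bound giving $Q' \le_{\textrm{nil}} Q$ and the upper bound giving $Q \le_{\textrm{nil}} Q'$) is the same as the paper's, and your lower bound, producing $2^{\Theta(n)}$ pairwise non-isomorphic classes by decorating a subset of the fibre over a fixed cycle vertex of a long winding walk, is a legitimate variant of the paper's route (the paper instead injects $Q'$-indecomposables of dimension $d$ into indecomposables of dimension $nd$ over a subquiver $S\subseteq Q$ and uses the rescaling $y$ built into Definition \ref{definition: order, equivalence relation}). The genuine gap is in the upper bound, and it is exactly the point you flag and then leave unresolved: you never establish that the gluing of the tree pieces onto the part over the central cycle has constant branching factor, so your count could a priori be of order $n^{\Theta(n)}$. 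The ingredient that closes this gap in the paper is the structure of coefficient quivers of nilpotent indecomposables over a pseudotree, display \eqref{eq: treeq}: the preimage of the central cycle in a connected acyclic coefficient quiver is a \emph{single} oriented walk (the spine), and each spine vertex carries at most one rooted subtree of the corresponding branch tree, attached at its root; in particular no branch piece can connect two distinct spine vertices (connected coefficient quivers over a tree are thin, hence meet the fibre over the root in at most one vertex). It is this fact that converts the count into the honest constant-coefficient linear recurrences \eqref{eq: recursion} and the one in Definition \ref{definition: recursion count}, whose characteristic polynomials have a real root strictly greater than $1$, yielding $\NI_Q(n)=O(a^n)$ with $a>1$. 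Without importing (or reproving) this spine description, your ``bounded decoration per vertex of $\Gamma_C$'' claim is an assertion, not an argument.

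A second, smaller omission: your decomposition is insensitive to the orientation of the central cycle, but the spine recursions are only set up for an equioriented cycle (Subsections \ref{subsection: one loop} and \ref{subsection: equiori}); when the cycle is acyclic the walk over it can change direction at sinks and sources, and the coefficient quiver may even contain the entire cycle, i.e.\ $\mathcal{P}_Q(n)\neq\emptyset$. The paper handles this case in Subsection \ref{subsection: reversing arrows} by a different device: the arrow-reversal bijection shows $|\mathcal{T}_Q(n)|$ is independent of orientation, and the surjection $\mathcal{T}_Q(n)\rightarrow\mathcal{P}_Q(n)$ gives $\NI_Q(n)\le 2|\mathcal{T}_Q(n)|=O(a^n)$. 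If you prefer to run your uniform decomposition in all orientations, you must additionally control both the zigzag spines and the full-cycle coefficient quivers; as written, your sketch does not address either.
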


Next, we study the Lie algebras associated to $\mathbb{F}_1$-representations of $Q$. Classically, one attaches a Hall algebra to a finitary abelian category. Dyckerhoff and Kapranov \cite{dyckerhoff2012higher} generalized this construction to proto-exact categories. In particular, one can obtain the Hall algebra of the category $\Rep(Q,\FF_1)$ of $\FF_1$-representations of $Q$. See Section \ref{subsection: Hall algebras} for details. In this case, the Hall algebra of $\Rep(Q,\FF_1)$ is the enveloping algebra of the Lie algebra defined by indecomposable objects. 

We say that a connected quiver $Q$ is a pseudotree (not necessarily proper) if the rank of $H_1(Q,\mathbb{Z})$ is at most one. In other words, a pseudotree is either a tree, a type $\tilde{\mathbb{A}}$ quiver, or a proper pseudotree as defined above. As a direct consequence of the results in \cite{jun2021coefficient}, the coefficient quiver of any nilpotent, indecomposable representation of a pseudotree $Q$ is again a pseudotree. Let $\mathcal{T}_Q(n)$ (resp.~$\mathcal{P}_Q(n)$) be the set of isomorphism classes of $n$-dimensional indecomposable objects in $\Rep(Q,\FF_1)_{\nil}$ (nilpotent representations) whose coefficient quiver is a tree (resp.~a pseudotree, but not a tree). Let $\mathcal{T}_Q := \bigcup_{n \in \mathbb{N}}{\mathcal{T}_Q(n)}$ and  $\mathcal{P}_Q := \bigcup_{n \in \mathbb{N}}{\mathcal{P}_Q(n)}$. Note that in \cite{jun2021coefficient}, the first and third authors studied the Lie algebras associated to trees and affine Dynkin quivers of type $\tilde{\mathbb{A}}$. The following theorem extends these results to proper pseudotrees.

\begin{nothmb}[Theorem \ref{theorem: Lie main thm}]
	Let $Q$ be a pseudotree. Let $\mathfrak{n}$ be the Lie algebra of nilpotent indecomposable representations of $Q$, and $\mathfrak{p}=\textrm{Span}(\mathcal{P}_Q)$. Then the following hold: 
\begin{enumerate} 
	\item As a Lie algebra, $\mathfrak{n}$ is generated by $\mathcal{T}_Q$. 
	\item
$\mathfrak{p}$ is a Lie ideal of $\mathfrak{n}$. As a Lie algebra, the isomorphism class of $\mathfrak{n}/\mathfrak{p}$ only depends on the underlying undirected graph of $Q$. 
\end{enumerate}
\end{nothmb}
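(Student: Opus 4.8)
I would first reduce to the case that $Q$ is a proper pseudotree, since the tree and type $\tilde{\mathbb{A}}$ cases are established in \cite{jun2021coefficient}; so assume $Q$ has a unique undirected cycle. Throughout I use the Hall--Lie bracket in the form $[M,N]=\sum_{L}(g^{L}_{M,N}-g^{L}_{N,M})[L]$, the sum running over isomorphism classes of nilpotent indecomposable representations $L$, where $g^{L}_{M,N}$ is the number of sub-$\FF_1$-representations $N'\le L$ with $N'\cong N$ and $L/N'\cong M$; recall that extensions of nilpotent representations are nilpotent and that the occurring $L$ are again indecomposable. On coefficient quivers such an $N'$ is a vertex subset $S$ of $\Gamma_L$ closed under all arrow maps, with $\Gamma_{N'}=\Gamma_L[S]$ and $\Gamma_{L/N'}=\Gamma_L[V(\Gamma_L)\smallsetminus S]$, so $\Gamma_L$ is the disjoint union $\Gamma_{L/N'}\sqcup\Gamma_{N'}$ together with some ``crossing'' arrows, all of which point from the $\Gamma_{L/N'}$-part to the $\Gamma_{N'}$-part. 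Writing $h(\Gamma):=\operatorname{rank}H_1(\Gamma,\ZZ)$ and applying $h(\Gamma)=|E(\Gamma)|-|V(\Gamma)|+\#\{\text{components}\}$ to this decomposition, my first step is the inequality $h(\Gamma_L)\ge h(\Gamma_M)+h(\Gamma_N)$, valid for every short exact sequence $0\to N\to L\to M\to 0$ with $L$ indecomposable.

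For the ideal statement in part (2) I would combine this with the fact (quoted just before the theorem) that a nilpotent indecomposable representation of a pseudotree has $h\le 1$: if $N\in\mathcal P_Q$ then $h(\Gamma_N)=1$, so every $L$ occurring in $[M,N]$ satisfies $1\le h(\Gamma_M)+1\le h(\Gamma_L)\le 1$, forcing $L\in\mathcal P_Q$ (and $M\in\mathcal T_Q$); when $M,N\in\mathcal P_Q$ no such $L$ exists, so in fact $\mathfrak p$ is an \emph{abelian} Lie ideal. Hence $\mathfrak n/\mathfrak p$ has the classes in $\mathcal T_Q$ as a vector-space basis, and the induced bracket retains exactly the tree-module summands of $[M,N]$.

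To show the isomorphism type of $\mathfrak n/\mathfrak p$ depends only on the underlying graph $G$, I would note that when $M,N,L$ are all tree modules the decomposition above has $h(\Gamma_M)=h(\Gamma_N)=0$, exactly one crossing arrow $e$, and closedness of $S$ is automatic; thus the coefficient of $[L]$ in the bracket of $\mathfrak n/\mathfrak p$ is the signed number of arrows $e=(b\to a)$ of $\Gamma_L$ whose deletion leaves the head-component $\cong\Gamma_N$ and the tail-component $\cong\Gamma_M$, minus the analogous count with $M$ and $N$ interchanged. Now fix $\alpha\in Q_1$, let $Q'$ be $Q$ with $\alpha$ reversed, and let $\phi\colon\mathcal T_Q\to\mathcal T_{Q'}$ be the bijection reversing, in every coefficient quiver, all arrows over $\alpha$ (this preserves underlying graph, connectedness, and acyclicity, hence nilpotency). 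Reversing the crossing arrow $e$ swaps ``head-component'' and ``tail-component'' precisely when $e$ lies over $\alpha$, which happens exactly when $n_\alpha(\Gamma_L)-n_\alpha(\Gamma_M)-n_\alpha(\Gamma_N)=1$, where $n_\alpha(\Gamma)$ counts the arrows of $\Gamma$ over $\alpha$ (and this quantity does not depend on the particular crossing arrow realizing a given $(L,M,N)$). A short sign computation then shows $\psi(M):=(-1)^{n_\alpha(\Gamma_M)}\phi(M)$ is a Lie algebra isomorphism $\mathfrak n(Q)/\mathfrak p(Q)\xrightarrow{\ \sim\ }\mathfrak n(Q')/\mathfrak p(Q')$; iterating over a chain of single-arrow reversals joining any two orientations of $G$ finishes part (2).

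Finally, for part (1): by part (2) we have $\mathfrak n=\langle\mathcal T_Q\rangle+\mathfrak p$, so it suffices to prove $P\in\langle\mathcal T_Q\rangle$ for each $P\in\mathcal P_Q$. Since composing the arrow maps once around a consistently oriented cycle would be a bijection, contradicting nilpotency, the cycle of $\Gamma_P$ must contain a source vertex $v$; cutting $\Gamma_P$ at the two cycle-arrows incident to $v$ produces a short exact sequence $0\to N\to P\to M\to 0$ in which $M$ (the ``ear'' at $v$) and $N$ (the remainder) are tree modules, so $[M,N]=\pm[P]+\sum_{L\ne P}(\cdots)[L]$. The difficulty is that every summand $L\ne P$ lying in $\mathcal P_Q$ has the \emph{same} total dimension as $P$, so induction on dimension does not close the argument by itself. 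I expect the resolution to proceed either by refining the induction with a secondary statistic --- for instance the distance in $\Gamma_P$ between the two re-gluing sites, or the size of the ear at $v$ --- together with a case analysis of how two crossing arrows between two trees may be inserted subject to the $\FF_1$-linearity constraints, showing that each remaining $\mathcal P_Q$-summand is strictly smaller; or by assembling the brackets $[M,N]$ over all admissible cuts into a single linear system and deducing full rank of its coefficient matrix from the recursive enumeration of coefficient quivers of proper pseudotrees carried out in Section~\ref{section: growth}. Disposing of these unwanted equal-dimensional summands is the main obstacle.
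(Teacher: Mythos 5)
Your treatment of part (2) and of the ideal statement is essentially the paper's own argument: the ideal/abelian claims follow from the structure of extensions as in Lemma \ref{lemma: lemma from p1} (your homology-rank inequality is an equivalent repackaging of Lemma \ref{l: ideal}), and your sign-twisted map $\psi(M)=(-1)^{n_\alpha(\Gamma_M)}\phi(M)$ for a single arrow reversal, iterated over a chain of reversals, is exactly the isomorphism $\phi(S)=\epsilon(S)S$ with $\epsilon(S)=(-1)^{|c_S^{-1}(\alpha)|}$ used in the proof of Theorem \ref{theorem: Lie main thm}, with the same case split $\beta\neq\alpha$ versus $\beta=\alpha$.

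The genuine gap is in part (1), and you have named it yourself: with your decomposition (cutting $\Gamma_P$ at a source of its cycle into an ``ear'' $M$ and a remainder $N$, both possibly large trees), the commutator $[M,N]$ can contain other elements of $\mathcal{P}_Q$ of the same dimension as $P$, and neither your proposed secondary statistic nor the linear-system idea is carried out, so the argument does not close. The paper avoids this entirely by a sharper choice of cut: since $\mathcal{P}_Q\neq\emptyset$ forces the central cycle $C$ to be acyclic, $C$ has a sink $v$, and one takes a vertex $u\in c_P^{-1}(v)$ on the cycle of $\Gamma_P$ and removes $u$ together with its two incident cycle-arrows $\tilde{\alpha},\tilde{\beta}$. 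This exhibits $P$ as an extension of a tree representation $T$ by the \emph{simple} representation $T'$ supported at $v$. Because one factor is a single vertex, the only way to produce a cycle when re-gluing is to add back both $\tilde{\alpha}$ and $\tilde{\beta}$, so the product $[T]\cdot[T']$ contains exactly one pseudotree term, namely $[P]$ with coefficient $1$, alongside the split term and the tree terms $[T\cup\tilde{\alpha}]$, $[T\cup\tilde{\beta}]$, while $[T']\cdot[T]$ is the direct sum. Hence $[P]=[[T],[T']]-[T\cup\tilde{\alpha}]-[T\cup\tilde{\beta}]$ lies in the Lie subalgebra generated by $\mathcal{T}_Q$ directly, with no induction on dimension and no control of equal-dimensional summands required. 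If you want to salvage your write-up, replace your ear decomposition by this one-vertex cut at (a lift of) a sink of $C$; the rest of your outline then goes through.
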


Finally, we turn our attention to coverings of quivers. In \cite{jun2021coefficient}, the first and third authors proved that when the coefficient quiver $\Gamma_M$ of an $\FF_1$-representation $M$ is equipped with certain integer-valued functions on vertices satisfying some nice conditions (Definitions \ref{definition: winding},\ref{definition: disting}), one can compute the Euler characteristic of a quiver Grassmannian in a purely combinatorial way by counting the number of subquivers of the coefficient quiver of $M$ (Theorem \ref{theorem: quiver grass thm}). In Section \ref{section: coverings and contractions}, we explore this idea further by considering coverings and contractions of coefficient quivers. In particular, we prove the following among others.

\begin{nothmc}[Proposition \ref{proposition: main proposition}]
Let $Q$ be a quiver. For each $e \in Q_1$, one can construct a covering $c_e:\Gamma_e \to Q$ which has a nice grading distinguishing vertices. 
\end{nothmc}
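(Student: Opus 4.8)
The plan is to realise $\Gamma_e$ as the $\mathbb{Z}$-Galois covering of $Q$ attached to the group homomorphism $\pi_1(Q) \to \mathbb{Z}$ that records the signed number of times a closed walk traverses the arrow $e$. Explicitly, I would put $(\Gamma_e)_0 := Q_0 \times \mathbb{Z}$, and for every arrow $\alpha\colon i \to j$ of $Q$ and every $n \in \mathbb{Z}$ insert a single arrow $\alpha_n\colon (i,n) \to (j, n + \varepsilon_\alpha)$, where $\varepsilon_\alpha = 1$ if $\alpha = e$ and $\varepsilon_\alpha = 0$ otherwise. The map $c_e\colon \Gamma_e \to Q$ is the evident projection $(i,n)\mapsto i$ on vertices together with the colour map $\alpha_n \mapsto \alpha$ on arrows. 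To see that $c_e$ is a covering one checks locally: at a vertex $(i,n)$ the arrows of $\Gamma_e$ leaving $(i,n)$ are exactly $\{\alpha_n : \alpha \text{ leaves } i\}$ and those entering $(i,n)$ are $\{\alpha_{\,n-\varepsilon_\alpha} : \alpha \text{ enters } i\}$, so $c_e$ restricts to a bijection onto the arrows leaving (resp.\ entering) $i$. In particular $c_e$ is a winding.

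To produce the grading I would take the ``level'' function $\partial\colon (\Gamma_e)_0 \to \mathbb{Z}$, $\partial(i,n) = n$, together with the function $g\colon Q_1 \to \mathbb{Z}$ defined by $g(\alpha) = \varepsilon_\alpha$. By construction, along each arrow $\alpha_n\colon (i,n) \to (j, n + \varepsilon_\alpha)$ the value of $\partial$ increases by exactly $\varepsilon_\alpha = g(\alpha) = g(c_e(\alpha_n))$, so $\partial$ is a grading of $\Gamma_e$ compatible with $c_e$ (constant increment per colour), as required by Definition~\ref{definition: winding}. Finally, for each $i \in Q_0$ the fibre $c_e^{-1}(i) = \{(i,n) : n \in \mathbb{Z}\}$ is mapped bijectively to $\mathbb{Z}$ by $\partial$, so $\partial$ takes distinct values on distinct vertices lying over the same vertex of $Q$; hence $\partial$ distinguishes vertices in the sense of Definition~\ref{definition: disting}. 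This is exactly the asserted ``nice grading distinguishing vertices''.

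Since the covering is essentially forced by the data $(Q,e)$, I do not anticipate a substantial difficulty, and the only real work is bookkeeping: one must confirm that the local bijectivity above is precisely the condition defining a covering (and hence a winding) used in the paper, that $\partial$ has the exact form of a distinguishing grading in Definitions~\ref{definition: winding}--\ref{definition: disting} (so that it feeds correctly into Theorem~\ref{theorem: quiver grass thm}), and that ``distinguishing'' is being tested on the fibres of $c_e$. A minor cosmetic point worth flagging is that $\Gamma_e$ is infinite, and in fact disconnected when $e$ is a bridge of the underlying graph; for any application requiring a finite coefficient quiver one simply restricts $\partial$ and $c_e$ to a finite interval $\{N_0,\dots,N_1\}$ of levels (or to a finite successor-closed subquiver), which affects neither the covering property nor the distinguishing grading.
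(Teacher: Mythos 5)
There is a genuine gap at the final step. The paper's Definition \ref{definition: disting} requires a single nice grading to satisfy $\partial(u)\neq\partial(v)$ for \emph{every} pair of distinct vertices $u\neq v\in(\Gamma_e)_0$, not merely for distinct vertices lying in the same fibre of $c_e$. Your level function $\partial(i,n)=n$ is indeed a nice grading (the increment along an arrow depends only on its colour), but when $|Q_0|\geq 2$ it takes the same value on $(i,n)$ and $(j,n)$ for $i\neq j$, so it does not distinguish the vertices of $\Gamma_e$ in the sense the proposition (and Theorem \ref{theorem: quiver grass thm}, which consumes this hypothesis) actually uses. The sentence ``hence $\partial$ distinguishes vertices in the sense of Definition \ref{definition: disting}'' is therefore a misreading of that definition: you have only verified fibre-wise injectivity.

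The repair is easy and lands you on the paper's own construction. Fix a labelling $L:Q_0\to\{1,\dots,|Q_0|\}$ and an integer $N>|Q_0|$, and set $\partial(i,n)=nN+L(i)$. Along an arrow of colour $\alpha$ the increment is $\varepsilon_\alpha N+L(t(\alpha))-L(s(\alpha))$, which depends only on $\alpha$, so $\partial$ is still nice; and $\partial$ is injective on all of $(\Gamma_e)_0$, since equal values force equal levels (as $N$ exceeds the spread of $L$) and then equal labels. This is exactly what Construction \ref{construction} does: grade the first copy of $Q\setminus\{e\}$ by $L$, propagate using $m(\alpha)=L(t(\alpha))-L(s(\alpha))$, and assign the lifts of $e$ a ``large enough'' jump $m(e)$. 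Your remaining points are fine and essentially match the paper: truncating your $\mathbb{Z}$-cover to finitely many consecutive levels recovers the paper's finite $\Gamma_e$ (chained copies of $Q$ with $e$ deleted), the local bijectivity argument is the right check for the covering condition, and the small failure of the literal covering axioms at the two extreme copies after truncation is a wrinkle the paper's own statement shares rather than one introduced by you.
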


This paper is organized as follows. In Section \ref{section: preliminaries}, we recall basic definitions and facts for $\FF_1$-representations of quivers. In Section \ref{section: growth}, we prove our Theorem A on the asymptotic behavior of a growth function counting isomorphism classes of indecomposable, nilpotent representations of pseudotrees. In Section \ref{section: Lie algebra}, we prove Theorem B on structural results on Lie algebras of $\FF_1$-representations of pseudotrees. Finally, in Section \ref{section: coverings and contractions}, we explore a notion of coverings, and prove Theorem C. 

\bigskip

\textbf{Acknowledgment}\hspace{0.1cm} Part of this research was done while the first author was visiting the second author at KAIST. The first author thanks KAIST for the hospitality.

\section{Preliminaries} \label{section: preliminaries}

\subsection{$\FF_1$-representations of quivers}

Representations of a quiver over $\FF_1$ is obtained by replacing the category of vector spaces with the category of ``$\FF_1$-vector spaces.'' 

\begin{mydef}
An $\FF_1$-vector space is a finite pointed set $(V,0_V)$. The dimension of $V$ is $\dim_{\FF_1}V=|V|-1$. An $\FF_1$-linear map $$f:(V,0_V)\to (W,0_W)$$ between $\FF_1$-vector spaces is a pointed function $f$ whose restriction to $(V-f^{-1}(0_W))$ is injective. We call $\textrm{Vect}(\mathbb{F}_1)$, the category of \emph{vector spaces} over $\mathbb{F}_1$ whose objects are $\FF_1$ vector spaces and morphism are $\FF_1$-linear maps. 

\end{mydef}

\begin{mydef}
	A \emph{quiver} $Q=(Q_0,Q_1,s,t)$ is a finite directed graph\footnote{We allow multiple arrows and loops}, where
	\begin{enumerate}
		\item 
		$Q_0$ (resp.~$Q_1$) is the finite set of vertices (resp.~arrows),
		\item
		$s$ and $t$ are functions
		\[
		s,t:Q_1 \to Q_0
		\] 
		assigning to each arrow in $Q_1$ its \emph{source} and \emph{target} in $Q_0$. 
	\end{enumerate}
\end{mydef}

Let $Q$ and $Q'$ be quivers. A \emph{quiver map} $c : Q \rightarrow Q'$ is a pair of functions  
\[ 
c_i : Q_i \rightarrow Q_i', \quad \textrm{for }i = 0,1,
\] 
satisfying the following condition:
\[
s(c_1(\alpha)) = c_0(s(\alpha)) \textrm{ and } t(c_1(\alpha)) = c_0(t(\alpha)), \quad \forall \alpha \in Q_1.
\] 
A quiver map $c$ is injective (resp.~surjective) if and only if $c_0$ and $c_1$ are injective (resp.~surjective).   

If $Q$ is connected, its underlying graph $\overline{Q}$ can be considered a connected $1$-simplex. By a slight abuse of notation, we let $H_1(Q,\mathbb{Z})$ stand for the first homology group (with integer coefficients) of $\overline{Q}$. Note that $H_1(Q,\mathbb{Z})$ is a finitely-generated free abelian group. 

\begin{mydef} 
Let $Q$ be a connected quiver. Then $Q$ is a \emph{pseudotree} if $\operatorname{rank}H_1(Q,\mathbb{Z}) \le 1$. We say that $Q$ is a \emph{proper} pseudotree if $Q$ is a pseudotree which is neither a tree nor a type $\tilde{\mathbb{A}}$ quiver (i.e. an arbitrary orientation of a simple cycle).
\end{mydef} 

The class of proper pseudotrees will be the main focus of this article. In particular, we will be interested in their $\FF_1$-representations, as defined below.

\begin{mydef}\cite[Definition 4.1]{szczesny2011representations}\label{definition: representation of a quiver over $F_1$}
Let $Q$ be a quiver. An $\FF_1$-representation of $Q$ is a collection $\mathbb{V}=(V_i,f_\alpha)$, $i\in Q_0$, $\alpha \in Q_1$, where $V_i$ are $\FF_1$-vector spaces and $f_\alpha:V_{s(\alpha)} \to V_{t(\alpha)}$ are $\FF_1$-linear maps. 
\end{mydef} 

\begin{mydef}\cite[Definition 4.3]{szczesny2011representations}
	Let $\mathbb{V}=(V_i,f_\alpha)$ be an $\FF_1$-representation of $Q$. 
	\begin{enumerate}
		\item 
		The \emph{dimension} of $\mathbb{V}$ is $\dim(\mathbb{V})=\sum_{i \in Q_0} \dim(V_i)$.
		\item 
		The \emph{dimension vector} of $\mathbb{V}$ is the $|Q_0|$-tuple $\underline{\dim}(\mathbb{V})=(\dim(V_i))_{i \in Q_0}$.
	\end{enumerate}
The notions of nilpotent $\FF_1$-representations and indecomposable $\FF_1$-representations are defined similar to the case over fields. See \cite[Section 2]{jun2021coefficient} for precise definitions. 
\end{mydef}

For representations  $\mathbb{V}=(V_i,f_\alpha)$ and $\mathbb{W}=(W_i,g_\alpha)$ of a quiver $Q$ over $\FF_1$, a morphism $\Phi:\mathbb{V} \to \mathbb{W}$ is a collection of $\FF_1$-linear maps $(\phi_i)_{i \in Q_0}$, where $\phi_i:V_i \to W_i$, such that the following commutes for each $i \in Q_0$: 
\begin{equation}
	\begin{tikzcd}[row sep=large, column sep=1.5cm]
		V_{s(\alpha)}\arrow{r}{\phi_{s(\alpha)}}\arrow{d}{f_\alpha}
		& W_{s(\alpha)} \arrow{d}{g_\alpha} \\
		V_{t(\alpha)} \arrow{r}{\phi_{t(\alpha)}} 
		& W_{t(\alpha)}
	\end{tikzcd}
\end{equation}
We let $\textrm{Rep}(Q,\mathbb{F}_1)$ be the category of representations of $Q$ over $\mathbb{F}_1$, and  $\textrm{Rep}(Q,\mathbb{F}_1)_{\textrm{nil}}$ be the full subcategory consisting of nilpotent representations.  

For any field $k$ be a field and $\mathbb{F}_1$-vector space $V$, we let $V_k$ be the vector
space whose basis is $V- \{0_V\}$. For an $\mathbb{F}_1$-linear map $f : V \to W$, we let $f_k$ be the linear
from $V_k$ to $W_k$ induced from $f$. This defines a \emph{base change functor} $k\otimes_{\mathbb{F}_1}- : \textrm{Rep}(Q,\mathbb{F}_1) \rightarrow \textrm{Rep}(Q,k)$. For an $\mathbb{F}_1$-representation $M$ of $Q$, we write $M_k := k\otimes_{\mathbb{F}_1}M$. Of course, the base change functor restricts to nilpotent representations as well.

\subsection{Coefficient quivers}
Coefficients quivers were first introduced by Ringel \cite{ringel1998exceptional} as a combinatorial gadget to study representations of quivers. In \cite{jun2020quiver, jun2021coefficient}, the first and third authors recast $\FF_1$-representations in terms of other quivers (also called coefficient quivers).

\begin{mydef}\label{definition: coefficient quiver}
Let $\mathbb{V}$ be an $\FF_1$-representation of $Q$. The coefficient quiver $\Gamma_{\mathbb{V}}$ has the following vertex set:
\[
	(\Gamma_{\mathbb{V}})_0 = \bigsqcup_{v \in Q_0}{(\mathbb{V}_v\setminus\{0\})}.
	\]
		For each $\alpha \in Q_1$, draw an arrow $(\alpha,i,j)$ in $\Gamma_{\mathbb{V}}$ from $i$ to $j$  if and only if $\mathbb{V}_{\alpha}(i) = j$. Then associate to $\Gamma_{\mathbb{V}}$ a quiver map $c_{\mathbb{V}} : \Gamma_{\mathbb{V}} \rightarrow Q$. This map is defined on vertices via the formula
	\[ 
	c_{\mathbb{V}}(i) = v,
	\]
	for all $i \in \mathbb{V}_v$. It is defined on arrows via the formula 
	\[ 
	c_{\mathbb{V}}(\alpha,i,j) = \alpha.
	\]
\end{mydef}     

In \cite[Proposition 3.7]{jun2021coefficient}, it is proven that $\Rep(Q,\FF_1)$ is equivalent to a category $\mathcal{C}_Q$ whose objects are winding maps $c : \Gamma \rightarrow Q$. Below we recall the construction, along with several related results which were originally proved in \cite{jun2020quiver}.

\begin{mydef} 
A \emph{winding} is a quiver map $c : \Gamma \rightarrow Q$ such that for all $\alpha, \beta \in \Gamma_1$ with $\alpha \neq \beta$, $c(\alpha) = c(\beta)$ implies $s(\alpha) \neq s(\beta)$ and $t(\alpha) \neq t(\beta)$. 
\end{mydef}

Note that if $M$ is an $\mathbb{F}_1$-representation of $Q$, then $c_M : \Gamma_M \rightarrow Q$ is a winding. In fact, the converse holds as well: a winding $c : \Gamma \rightarrow Q$ determines an $\mathbb{F}_1$-representation of $Q$ that is unique up to isomorphism.

\begin{mydef}\cite[Definition 3.8]{jun2020quiver} 
Let $c : \Gamma \rightarrow Q$ and $c' : \Gamma' \rightarrow Q$ be quiver maps.
\begin{enumerate}
	\item 
A \emph{coefficient morphism} $c \rightarrow c'$ is a quiver map $\phi : \Gamma \rightarrow \Gamma'$ such that the following diagram commutes:
\begin{equation*}
\begin{tikzcd}[row sep=2em]
\Gamma \arrow[rr,"\phi"] \arrow[dr,swap,"c"]
&& \Gamma' \arrow[dl,,"c'"] \\
& Q
\end{tikzcd}.
\end{equation*}
\noindent A coefficient morphism $\phi : c \rightarrow c'$ is a \emph{coefficient isomorphism} if and only if it is bijective on vertices and arrows.
\item 
A collection of vertices $\mathcal{U}$ in $\Gamma$ is said to be \emph{successor-closed} if for every oriented path from $v$ to $u$ in $\Gamma$, $u \in \mathcal{U}$ implies $v \in \mathcal{U}$ as well. A full subquiver is said to be successor-closed if its vertex set is successor-closed. 
\item 
A collection of vertices $\mathcal{D}$ in $\Gamma$ is said to be \emph{predecessor-closed} if for every oriented path from $d$ to $v$ in $\Gamma$, $d \in \mathcal{D}$ implies $v \in \mathcal{D}$ as well. A full subquiver is said to be predecessor-closed if its vertex set is predecessor-closed.
\end{enumerate}
\end{mydef}

\begin{pro}\cite[Lemma 3.10]{jun2020quiver} 
Let $Q$ be a quiver. For any two $\FF_1$-representations $\mathbb{V}= (V_u, f_{\alpha})$ and $\mathbb{W} = (W_u, g_{\alpha})$ of $Q$, there is a bijection between $\Hom(\mathbb{V},\mathbb{W})$ and the set of coefficient isomorphisms from successor-closed full subquivers of $\Gamma_\mathbb{V}$ to predecessor-closed full subquivers of $\Gamma_\mathbb{W}$.
\end{pro}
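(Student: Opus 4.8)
The plan is to construct the claimed bijection directly from the linear-algebra data, then check compatibility with the combinatorial picture. Given a morphism $\Phi = (\phi_u)_{u \in Q_0} : \mathbb{V} \to \mathbb{W}$, each $\phi_u : V_u \to W_u$ is an $\FF_1$-linear map, so its restriction to $V_u \setminus \phi_u^{-1}(0_{W_u})$ is injective. First I would let $\mathcal{U} = \bigsqcup_{u \in Q_0}\bigl(V_u \setminus \phi_u^{-1}(0_{W_u})\bigr) \subseteq (\Gamma_\mathbb{V})_0$, i.e.\ the set of basis vectors not killed by $\Phi$, and $\mathcal{D} = \bigsqcup_{u \in Q_0}\phi_u(V_u \setminus \phi_u^{-1}(0_{W_u})) \subseteq (\Gamma_\mathbb{W})_0$, the image. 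The commuting squares defining a morphism say exactly that $g_\alpha \circ \phi_{s(\alpha)} = \phi_{t(\alpha)}\circ f_\alpha$ for every $\alpha \in Q_1$. I would read this pointwise: if $i \in V_{s(\alpha)}$ and $f_\alpha(i) = j \neq 0$, and $\phi_{s(\alpha)}(i) \neq 0$, then $\phi_{t(\alpha)}(j) = g_\alpha(\phi_{s(\alpha)}(i)) \neq 0$, since $g_\alpha$ applied to a nonzero element that is not in its kernel is nonzero—wait, one must be slightly careful here, but the key point is that $g_\alpha\bigl(\phi_{s(\alpha)}(i)\bigr)$ equals $\phi_{t(\alpha)}(j)$, so nonvanishing of the latter forces nonvanishing of the former and vice versa. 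From this, a short case analysis shows that $\mathcal{U}$ is successor-closed in $\Gamma_\mathbb{V}$ and $\mathcal{D}$ is predecessor-closed in $\Gamma_\mathbb{W}$: an arrow into a vertex of $\mathcal{U}$ forces its source into $\mathcal{U}$, and an arrow out of a vertex of $\mathcal{D}$ forces its target into $\mathcal{D}$.

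Next I would show that $\Phi$ restricts to a coefficient isomorphism $\psi : c_\mathbb{V}|_{\mathcal{U}} \to c_\mathbb{W}|_{\mathcal{D}}$. On vertices, $\psi = \bigsqcup_u \phi_u|_{V_u \setminus \phi_u^{-1}(0)}$ is a bijection $\mathcal{U} \to \mathcal{D}$ by the $\FF_1$-linearity (injectivity away from the kernel) together with the definition of $\mathcal{D}$ as the image. On arrows, an arrow $(\alpha, i, j)$ of the full subquiver on $\mathcal{U}$ has both endpoints in $\mathcal{U}$, hence $\phi_{s(\alpha)}(i)$ and $\phi_{t(\alpha)}(j)$ are nonzero, and the commuting square gives $g_\alpha(\phi_{s(\alpha)}(i)) = \phi_{t(\alpha)}(j)$, so $(\alpha, \phi_{s(\alpha)}(i), \phi_{t(\alpha)}(j))$ is an arrow of $\Gamma_\mathbb{W}$ with both endpoints in $\mathcal{D}$; this assignment is a bijection on arrow sets because $\psi$ is a vertex bijection and arrows of a coefficient quiver are determined by $\alpha$ and their source. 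Finally $\psi$ commutes with the structure maps to $Q$ since $c_\mathbb{V}$ and $c_\mathbb{W}$ are identity on the ``color'' $\alpha$ and $\psi$ preserves colors by construction. For the inverse direction, given a coefficient isomorphism $\psi$ from a successor-closed full subquiver $\mathcal{U}$ of $\Gamma_\mathbb{V}$ to a predecessor-closed full subquiver $\mathcal{D}$ of $\Gamma_\mathbb{W}$, I would define $\phi_u : V_u \to W_u$ by sending a basis vector $i \in V_u$ to $\psi(i)$ if $i \in \mathcal{U}$ and to $0_{W_u}$ otherwise. Injectivity of $\psi$ makes each $\phi_u$ $\FF_1$-linear. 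Checking that the squares commute splits into cases according to whether $i$ and $f_\alpha(i)$ lie in $\mathcal{U}$: successor-closedness of $\mathcal{U}$ guarantees that if $f_\alpha(i) \in \mathcal{U}$ then $i \in \mathcal{U}$, and predecessor-closedness of $\mathcal{D}$ handles the image side, so in every case both sides of the square agree.

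The last step is verifying that the two constructions are mutually inverse, which is essentially bookkeeping: starting from $\Phi$, forming $(\mathcal{U}, \mathcal{D}, \psi)$, and then rebuilding a morphism recovers $\phi_u$ on the nose because $\phi_u$ already sends exactly the vectors outside $\mathcal{U}$ to zero and agrees with $\psi$ on $\mathcal{U}$; conversely, starting from $(\mathcal{U}, \mathcal{D}, \psi)$, building $\Phi$, and re-extracting the non-kernel set returns $\mathcal{U}$ and the image returns $\mathcal{D}$, with the restricted map equal to $\psi$. I expect the main obstacle to be the case analysis in the commuting-square verifications—one has to be disciplined about the four combinations ($i \in \mathcal{U}$ or not, $f_\alpha(i) \in \mathcal{U}$ or not) and to use the closure hypotheses to rule out the ``bad'' combination in each direction—rather than any conceptual difficulty; everything else is unwinding Definition~\ref{definition: coefficient quiver} and the definition of a morphism of $\FF_1$-representations. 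It may also be cleanest to first observe, as an auxiliary remark, that for an $\FF_1$-linear map $g : W \to W'$ and $w \in W$ one has $g(w) = 0_{W'}$ if and only if $w \in g^{-1}(0_{W'})$, so that nonvanishing propagates cleanly along the squares; stating this once avoids repeating it in each case.
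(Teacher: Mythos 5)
Your construction—taking $\mathcal{U}$ to be the complement of the kernel, $\mathcal{D}$ the image, restricting $\Phi$ to get the coefficient isomorphism, and extending by zero for the inverse—is correct and is essentially the same argument as the cited proof of this result, including getting the closure directions right for the paper's (nonstandard) usage of successor/predecessor-closed: the non-kernel set satisfies ``target in forces source in'' and the image satisfies ``source in forces target in.'' Your initial worry about nonvanishing propagating forward along an arrow is resolved correctly by your restatement that $g_\alpha(\phi_{s(\alpha)}(i))=\phi_{t(\alpha)}(f_\alpha(i))$, which is the only identity the case analysis actually needs.
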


\begin{cor} 
Let $Q$ be a quiver, with $\mathbb{F}_1$-representations $M$ and $N$ of $Q$. Then $M\cong N$ if and only if there is a coefficient isomorphism between $\Gamma_M$ and $\Gamma_N$. 
\end{cor}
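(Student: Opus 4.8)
The plan is to deduce this directly from the preceding Proposition (Lemma 3.10 of \cite{jun2020quiver}), which for any $\FF_1$-representations $\mathbb{V},\mathbb{W}$ of $Q$ furnishes a bijection between $\Hom(\mathbb{V},\mathbb{W})$ and the set of coefficient isomorphisms from successor-closed full subquivers of $\Gamma_{\mathbb{V}}$ to predecessor-closed full subquivers of $\Gamma_{\mathbb{W}}$. The key is to recall how this bijection is defined: a morphism $\Phi = (\phi_i)_{i \in Q_0} : \mathbb{V} \to \mathbb{W}$ is sent to the map $\phi$ obtained by restricting each $\phi_i$ to the set of vertices $v$ with $\phi_i(v) \neq 0_{W_i}$; its domain is the full subquiver of $\Gamma_{\mathbb{V}}$ on those vertices (successor-closed, since $g_\alpha \phi_{s(\alpha)} = \phi_{t(\alpha)} f_\alpha$), and its image is the corresponding predecessor-closed full subquiver of $\Gamma_{\mathbb{W}}$.

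First I would treat the forward direction. If $\Phi : M \to N$ is an isomorphism in $\Rep(Q,\FF_1)$, then each component $\phi_i : M_i \to N_i$ is a bijection of pointed sets, so $\phi_i^{-1}(0_{N_i}) = \{0_{M_i}\}$. Hence the domain of the associated $\phi$ is all of $\Gamma_M$ and its image is all of $\Gamma_N$, so $\phi : \Gamma_M \to \Gamma_N$ is a coefficient isomorphism.

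Conversely, given a coefficient isomorphism $\phi : \Gamma_M \to \Gamma_N$, observe that $\Gamma_M$ (resp.\ $\Gamma_N$) is a successor-closed (resp.\ predecessor-closed) full subquiver of itself, so $\phi$ lies in the set parametrized by the Proposition's bijection and therefore corresponds to a morphism $\Phi = (\phi_i) : M \to N$. Since $\phi$ is bijective on vertices, each $\phi_i$ is a bijection on nonzero elements, hence extends to a bijection of pointed sets $M_i \to N_i$. A morphism of $\FF_1$-representations all of whose components are bijections is an isomorphism: the componentwise inverses $\phi_i^{-1}$ again commute with the structure maps (invert the commuting squares) and so define a morphism $N \to M$ inverse to $\Phi$. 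Thus $M \cong N$.

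The argument is essentially bookkeeping on top of the Proposition; the only point that requires a moment's care is the identification, in the forward direction, of an isomorphism of representations with a coefficient isomorphism defined on the \emph{whole} coefficient quivers rather than on proper subquivers — equivalently, the remark that $\phi_i^{-1}(0_{W_i}) = \{0_{M_i}\}$ exactly when $\phi_i$ is a bijection of pointed sets. No step here is a genuine obstacle.
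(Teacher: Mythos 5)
Your argument is correct and is essentially the route the paper intends: the corollary is stated as an immediate consequence of the preceding Proposition, and your bookkeeping (an isomorphism has pointed-set-bijective components, hence the associated coefficient isomorphism is defined on all of $\Gamma_M$ with image all of $\Gamma_N$; conversely a coefficient isomorphism on the full quivers corresponds under the bijection to a morphism with bijective components, whose componentwise inverses again commute) is exactly the expected verification. No issues.
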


\begin{mydef}\cite{jun2021coefficient}
Let $Q$ be a quiver. Define $\mathcal{C}_Q$ to be the category whose objects are windings of quivers $F:S \to Q$. A morphism $\phi : (S,F)\rightarrow (S',F')$ is an ordered triple $\phi = (\mathcal{U}_{\phi}, \mathcal{D}_{\phi}, c_{\phi})$, where 
\begin{enumerate} 
\item $\mathcal{U}_{\phi}$ is a full subquiver of $S$ whose vertex set is successor closed,
\item $\mathcal{D}_{\phi}$ is a full subquiver of $S'$ whose vertex set is predecessor closed, and
\item $c_{\phi} : \mathcal{U}_{\phi} \rightarrow \mathcal{D}_{\phi}$ is a coefficient isomorphism.  
\end{enumerate}
If $(S,F) \xrightarrow[]{\phi} (S',F')$ and $(S',F') \xrightarrow[]{\psi} (S'', F'')$ are two morphisms in $\mathcal{C}_Q$, their composition $(S,F)\xrightarrow[]{\psi\circ\phi} (S'',F'')$ is the ordered triple
\begin{equation}\label{eq: composition} 
\psi \circ \phi =  \left( \mathcal{U}_{\psi\circ \phi}, \mathcal{D}_{\psi \circ \phi}, c_{\psi\circ\phi} \right) = \left( c_{\phi}^{-1}\left( \mathcal{U}_{\psi} \cap \mathcal{D}_{\phi} \right), c_{\psi}\left( \mathcal{U}_{\psi}\cap \mathcal{D}_{\phi} \right), c_{\psi} \circ c_{\phi} \right).
\end{equation}   
\end{mydef} 

\begin{pro}\cite[Proposition 3.7]{jun2021coefficient} 
Let $Q$ be a quiver. Then there is an equivalence of categories $F : \mathcal{C}_Q \rightarrow \operatorname{Rep}(Q,\FF_1)$ that restricts to an equivalence between $\operatorname{Rep}(Q,\FF_1)_{\nil}$ and the full subcategory of $\mathcal{C}_Q$ whose objects are windings $c : \Gamma \rightarrow Q$ with $\Gamma$ acyclic.
\end{pro}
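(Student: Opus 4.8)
The plan is to build the functor $F$ by hand, check it is a well-defined functor, then verify it is fully faithful and essentially surjective; the restriction to the nilpotent subcategory will then fall out of a combinatorial characterization of acyclicity. First, on objects: given a winding $c:\Gamma\rightarrow Q$, I would set $F(c)=\mathbb{V}$ with $V_v:=c_0^{-1}(v)\sqcup\{0_v\}$ for each $v\in Q_0$, and, for $\alpha\in Q_1$, let $f_\alpha$ send $i\in c_0^{-1}(s(\alpha))$ to $t(\beta)$ if there is an arrow $\beta$ of $\Gamma$ over $\alpha$ with source $i$, and to $0$ otherwise. The source-injectivity clause of the winding condition makes such a $\beta$ unique (so $f_\alpha$ is a genuine function), and the target-injectivity clause makes each $f_\alpha$ injective away from the fiber over $0$, i.e. $\FF_1$-linear. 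A direct inspection of Definition \ref{definition: coefficient quiver} shows $\Gamma_{F(c)}$ is canonically isomorphic to $\Gamma$ as a quiver over $Q$; in particular $F(c_{\mathbb{V}})\cong\mathbb{V}$ for every $\mathbb{V}\in\Rep(Q,\FF_1)$, which gives essential surjectivity.

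On morphisms: given $\phi=(\mathcal{U}_\phi,\mathcal{D}_\phi,c_\phi):(S,F)\rightarrow(S',F')$, I would let $F(\phi)$ be the family of pointed maps $\phi_v$ restricting to $c_\phi$ on $(\mathcal{U}_\phi)_0\cap c_0^{-1}(v)$ and killing everything else. Injectivity of $c_\phi$ on vertices gives $\FF_1$-linearity of each $\phi_v$ immediately, so the only real point is commutativity of the structure squares. For a vertex $i\notin\mathcal{U}_\phi$, successor-closedness of $\mathcal{U}_\phi$ prevents an arrow out of $i$ from landing in $\mathcal{U}_\phi$, so both composites vanish at $i$; for $i\in\mathcal{U}_\phi$, predecessor-closedness of $\mathcal{D}_\phi$ together with $c_\phi$ being a coefficient isomorphism forces an arrow over $\alpha$ out of $c_\phi(i)$ in $S'$ to exist exactly when the arrow over $\alpha$ out of $i$ in $S$ has target in $\mathcal{U}_\phi$, and then $c_\phi$ matches the two targets. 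Functoriality then reduces to unwinding \eqref{eq: composition}: the triple $\bigl(c_\phi^{-1}(\mathcal{U}_\psi\cap\mathcal{D}_\phi),\,c_\psi(\mathcal{U}_\psi\cap\mathcal{D}_\phi),\,c_\psi\circ c_\phi\bigr)$ is precisely the support, image, and restriction of the composite family of pointed maps, which is the description of $F(\psi)\circ F(\phi)$; and identities are clear since the identity of $(S,F)$ is $(S,S,\id_S)$.

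For full faithfulness I would exhibit the inverse on Hom-sets directly: to a representation morphism $\Phi=(\phi_v):F(S,F)\rightarrow F(S',F')$ assign the triple whose first component is the support $\bigsqcup_v\{i:\phi_v(i)\neq 0\}$, viewed as a full subquiver of $S$, whose second component is the image, viewed as a full subquiver of $S'$, and whose third component is the induced vertex bijection. Commutativity of the squares makes the support successor-closed (non-vanishing propagates backwards along arrows) and the image predecessor-closed (forwards), and the fact that $\Phi$ intertwines the structure maps makes the vertex bijection a coefficient isomorphism. One then checks this assignment is a two-sided inverse to $\phi\mapsto F(\phi)$. This step is in essence a repackaging of \cite[Lemma 3.10]{jun2020quiver}, which may be invoked directly.

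Finally, to pin down the nilpotent objects, observe that a finite quiver is acyclic (has no oriented cycle) if and only if its oriented paths have bounded length. Under the coefficient-quiver dictionary a composite $f_{\alpha_n}\circ\cdots\circ f_{\alpha_1}$ is nonzero on some basis element exactly when $\Gamma$ contains an oriented path lying over $\alpha_n\cdots\alpha_1$; hence $F(c)$ is nilpotent iff $\Gamma$ is acyclic, and the fully faithful $F$ already constructed restricts to the claimed equivalence of full subcategories. I expect the bookkeeping in the morphism step — verifying commutativity of the structure squares and, above all, matching \eqref{eq: composition} against composition of representation morphisms — to be the main obstacle; the object-level construction and the nilpotency criterion are comparatively routine.
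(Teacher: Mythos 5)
Correct, and essentially the only possible argument: this proposition is quoted from \cite{jun2021coefficient} and not reproved in the present paper, and your construction — producing a representation from a winding via the uniqueness of the arrow over each color at a given source/target, matching the triples $(\mathcal{U}_\phi,\mathcal{D}_\phi,c_\phi)$ with the support/image/restriction data of representation morphisms (which is exactly the content of \cite[Lemma 3.10]{jun2020quiver}, recalled in the paper and legitimately invoked), checking compatibility with the composition rule \eqref{eq: composition}, and identifying nilpotency with acyclicity of $\Gamma\cong\Gamma_{F(c)}$ — is precisely the argument underlying the cited result. I see no gaps; your case analysis for the commuting squares (using that $\mathcal{U}_\phi$ is successor-closed, $\mathcal{D}_\phi$ is predecessor-closed and full, and $c_\phi$ is bijective on arrows) is the key point and is handled correctly.
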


We end this section with a lemma that will be useful in Section \ref{section: Lie algebra}.

\begin{lem}\cite[Lemma 3.12]{jun2020quiver}\label{lemma: lemma from p1}
	Let $Q$ be a quiver, $E$ an $\FF_1$-representation of $Q$, and $X$ be a subrepresentation of $E$.
	\begin{enumerate}
		\item 
		$\Gamma_X$ is the full subquiver of $\Gamma_E$ obtained by removing vertices (and arrows with adjacent to those vertices) which do not correspond to $X$. The map $c_{X}$ is the restriction of $c_{E}$ to this subquiver.
		\item 
		The coefficient quiver $\Gamma_{E/X}$ of the quotient $E/X$ is the full subquiver of $\Gamma_{E}$ obtained by removing vertices (and arrows adjacent to those vertices) corresponding to $X$. The map $c_{E/X}$ is the restriction of $c_E$ to this subquiver. 
		\item 
		If
		\[ 
		0 \rightarrow X \xrightarrow[]{f} E \xrightarrow[]{g} Y \rightarrow 0 
		\] 
		is a short exact sequence of $\FF_1$-representations of $Q$, then $\Gamma_E$ is obtained from the disjoint union $\Gamma_X \sqcup \Gamma_Y$ by adding certain $\alpha$-colored arrows from $\alpha$-sinks of $\Gamma_Y$ to $\alpha$-sources of $\Gamma_X$, for each $\alpha \in Q_1$. Under this decomposition, $\Gamma_X$ (resp.~$\Gamma_Y$) is a predecessor-closed (resp.~successor-closed) subquiver of $\Gamma_E$.\footnote{By a predecessor-closed (resp.~successor-closed) subquiver of $\Gamma_E$, we mean a full subquiver $\Gamma$ such that for $u,v \in V(\Gamma_E)$ and any oriented path from $v$ to $u$ if $u \in V(\Gamma)$ (resp.~$v \in V(\Gamma)$), then $v \in V(\Gamma)$ (resp.~$u \in V(\Gamma)$). }
	\end{enumerate}		
\end{lem}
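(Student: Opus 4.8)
The plan is to unwind the definitions of subrepresentation and quotient in $\Rep(Q,\FF_1)$ and compare them term-by-term with Definition \ref{definition: coefficient quiver}. Recall that a subrepresentation $X$ of $E=(E_v,f_\alpha)$ is given by pointed subsets $X_v\subseteq E_v$ with $f_\alpha(X_{s(\alpha)})\subseteq X_{t(\alpha)}$ for every $\alpha\in Q_1$, and that the quotient $E/X$ has underlying pointed sets $(E/X)_v=(E_v\setminus X_v)\sqcup\{\ast\}$ with induced maps $\bar f_\alpha([x])=[f_\alpha(x)]$, so that $[f_\alpha(x)]=\ast$ precisely when $f_\alpha(x)\in X_{t(\alpha)}$. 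Throughout, I will use the $\FF_1$-linearity axiom that $f_\alpha$ is injective on $E_{s(\alpha)}\setminus f_\alpha^{-1}(0)$.

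For part (1), the inclusions $X_v\setminus\{0\}\subseteq E_v\setminus\{0\}$ identify $(\Gamma_X)_0$ with a subset of $(\Gamma_E)_0$. I would then check that an arrow $(\alpha,i,j)$ of $\Gamma_E$ with $i\in(\Gamma_X)_0$ automatically has $j=f_\alpha(i)\in X_{t(\alpha)}\setminus\{0\}$ since $X$ is closed under $f_\alpha$, and conversely that every arrow of $\Gamma_X$ is an arrow of $\Gamma_E$ with both endpoints in $(\Gamma_X)_0$. Hence $\Gamma_X$ is exactly the full subquiver of $\Gamma_E$ on $(\Gamma_X)_0$, and $c_X=c_E|_{\Gamma_X}$ is immediate from the vertex and arrow formulas of Definition \ref{definition: coefficient quiver}. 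The same observation shows that if $i\in(\Gamma_X)_0$ and there is an arrow $i\to j$ in $\Gamma_E$ then $j\in(\Gamma_X)_0$, i.e.\ $(\Gamma_X)_0$ is predecessor-closed.

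Part (2) is dual. The elements of $(E/X)_v\setminus\{\ast\}$ are in canonical bijection with $E_v\setminus X_v$, so $(\Gamma_{E/X})_0$ is identified with $(\Gamma_E)_0\setminus(\Gamma_X)_0$; and using $\bar f_\alpha([i])=[f_\alpha(i)]$, an arrow $(\alpha,[i],[j])$ of $\Gamma_{E/X}$ exists iff $f_\alpha(i)=j$ with $i\notin X_{s(\alpha)}$ and $j\notin X_{t(\alpha)}$, which is exactly the condition that $(\alpha,i,j)$ is an arrow of $\Gamma_E$ between two vertices outside $(\Gamma_X)_0$. This identifies $\Gamma_{E/X}$ with the corresponding full subquiver of $\Gamma_E$ and gives $c_{E/X}=c_E|_{\Gamma_{E/X}}$; the closure argument (if $v'\to v$ in $\Gamma_E$ and $v\notin X$ then $v'\notin X$, for otherwise $f_\alpha(v')\in X$) shows this vertex set is successor-closed.

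For part (3), after identifying $X$ with its image in $E$ and $Y$ with $E/X$, parts (1) and (2) already give $(\Gamma_E)_0=(\Gamma_X)_0\sqcup(\Gamma_Y)_0$ with $\Gamma_X$ (resp.\ $\Gamma_Y$) the predecessor-closed (resp.\ successor-closed) induced full subquiver. What remains is to locate the arrows of $\Gamma_E$ lying in neither piece. Such an arrow $(\alpha,i,j)$ joins the two parts; the possibility $i\in\Gamma_X$, $j\in\Gamma_Y$ cannot occur, since $X$ is closed under $f_\alpha$, so every such arrow runs from a vertex $i$ of $\Gamma_Y$ to a vertex $j$ of $\Gamma_X$. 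For such an arrow, $[i]$ has no outgoing $\alpha$-arrow in $\Gamma_Y=\Gamma_{E/X}$ because $\bar f_\alpha([i])=[f_\alpha(i)]=[j]=\ast$ (as $j\in X$), so $i$ is an $\alpha$-sink of $\Gamma_Y$; and $j$ has no incoming $\alpha$-arrow in $\Gamma_X$, for if $j'\in X_{s(\alpha)}\setminus\{0\}$ satisfied $f_\alpha(j')=j$, then $j'$ and $i$ would be distinct (one in $X$, one not) with the same nonzero image under $f_\alpha$, contradicting injectivity. Thus the extra arrows are precisely $\alpha$-colored arrows from $\alpha$-sinks of $\Gamma_Y$ to $\alpha$-sources of $\Gamma_X$. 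The whole argument is bookkeeping with the definitions; the only step using more than that is this last appeal to the $\FF_1$-linearity (injectivity) axiom, so that is the point I would write out most carefully.
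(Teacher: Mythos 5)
Your proof is correct: the paper does not reproduce a proof of this lemma (it is imported from \cite[Lemma 3.12]{jun2020quiver}), and your direct unwinding of the definitions — closure of $X$ under the structure maps for (1), the identification of $(E/X)_v$ minus its basepoint with $E_v\setminus X_v$ for (2), and the $\FF_1$-injectivity argument showing the connecting arrows must run from $\alpha$-sinks of $\Gamma_Y$ to $\alpha$-sources of $\Gamma_X$ for (3) — is exactly the intended argument. One terminological caution: your usage of ``predecessor-closed'' for $\Gamma_X$ (closed under following arrows forward) and ``successor-closed'' for $\Gamma_Y$ matches the definitions recalled in Section \ref{section: preliminaries} and the substance of the lemma, but not the lemma's own footnote, whose phrasing inadvertently swaps the roles of $u$ and $v$.
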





\subsection{Growth of nilpotent, indecomposable $\FF_1$-representation}

In this subsection, we briefly review main definitions and theorems on the growth function associated to a quiver $Q$, introduced in \cite{jun2020quiver}.

 \begin{mydef} \label{definition: indecomposables growth}
Let $Q$ be a quiver. Then $\NI_Q : \NN \rightarrow \NN$ is the function such that
\begin{equation}
		\NI_Q(n) = \#\{\text{isomorphism classes of $n$-dimensional indecomposables in $\Rep(Q,\FF_1)_{\nil}$} \}. 
\end{equation}
\end{mydef} 	

\begin{mydef}\label{definition: order, equivalence relation}
For quivers $Q$, $Q'$, we define a preorder $\leq_{\textrm{nil}}$ as follows:
\begin{equation}\label{eq: order}
Q \leq_{\textrm{nil}} Q' \iff \exists~x \in \mathbb{R}_{>0},~~y \in \mathbb{N} \textrm{ such that } 
\NI_Q(n) \leq x\NI_{Q'}(yn) \quad \forall n \gg 0.
\end{equation}
The preorder \eqref{eq: order} induces an equivalence relation $\approx_{nil}$ as follows:
\begin{equation}
Q \approx_{nil} Q' \iff Q	\leq_{\textrm{nil}} Q' \textrm{ and } Q' \leq_{\textrm{nil}} Q.
\end{equation}
\end{mydef} 

\begin{mydef}\label{d.bigO}
Given two functions $f: \NN \rightarrow \mathbb{R}_{>0}$ and $g : \NN \rightarrow \mathbb{R}_{>0}$, we write $f = O(g)$ or $f(n) = O(g(n))$ if there exists a positive real number $M > 0$ such that $f(n) \le Mg(n)$ for $n\gg 0$. 
\end{mydef} 
\begin{rmk}\label{r.functions}
The definition above implies that $Q \le_{\textrm{nil}} Q'$ if and only if $\NI_Q(n) = O(\NI_{Q'}(yn))$ for some $y \in \NN$. We can then extend the definition of $\le_{\textrm{nil}}$ to all functions $\NN \rightarrow \mathbb{R}$ by mandating $f \le_{\textrm{nil}} g$ if and only if $f(n) = O(g(yn))$ for some $y \in \NN$. Then $f \approx_{\textrm{nil}} g$ if and only if $f \le_{\textrm{nil}} g$ and $g \le_{\textrm{nil}}f$. It will be useful throughout to keep the following elementary fact in mind: $a^n \approx_{\nil} b^n$ for any two real numbers $a, b \in (1,\infty )$.  
\end{rmk}

Recall that a quiver $Q$ has finite representation type
over $\FF_1$ if and only if $\textrm{Rep}(Q,\FF_1)_{nil}$ has finitely many isomorphism classes of indecomposables. In terms of the order relation as in Definition \ref{definition: order, equivalence relation}, we have the following:
	
\begin{mythm}\cite[Theorem 5.3]{jun2020quiver}
Let $Q$ be a connected quiver. The following are equivalent.
\begin{enumerate}
	\item 
$Q$ is of finite representation type over $\FF_1$. 
\item 
$Q$ is a tree.
\item 
$Q \approx_{nil} \wild_0$. 
\end{enumerate}
\end{mythm}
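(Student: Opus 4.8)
The plan is to argue entirely through the coefficient--quiver dictionary. By the equivalence $F\colon\mathcal{C}_Q\to\Rep(Q,\FF_1)$ and its nilpotent refinement, isomorphism classes of $n$-dimensional indecomposables in $\Rep(Q,\FF_1)_{\nil}$ correspond bijectively to coefficient--isomorphism classes of windings $c\colon\Gamma\to Q$ with $\Gamma$ connected, acyclic, and $|\Gamma_0|=n$ (connectedness of $\Gamma$ encoding indecomposability, acyclicity encoding nilpotency; cf.\ Lemma~\ref{lemma: lemma from p1}). For each fixed $n$ there are only finitely many such $\Gamma$: once the $n$ vertices are distributed among the fibres $c^{-1}(v)$, $v\in Q_0$, the winding axiom forces the preimage of each arrow $e\in Q_1$ to be a partial matching between the two finite fibres over $s(e)$ and $t(e)$, and there are finitely many of these. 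Hence $\NI_Q(n)<\infty$ for every $n$, and ``$Q$ has finite representation type over $\FF_1$'' is equivalent to ``$\NI_Q(n)=0$ for all $n\gg 0$.'' I will prove $(2)\Rightarrow(1)$, then that $Q$ not a tree forces the negation of $(1)$, and finally $(1)\Leftrightarrow(3)$.

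For $(2)\Rightarrow(1)$, the crux is the claim: \emph{if $Q$ is a tree and $c\colon\Gamma\to Q$ is a winding with $\Gamma$ connected, then $c$ is injective.} Granting this, $\Gamma$ is isomorphic over $Q$ to a connected subquiver of $Q$; a finite tree has only finitely many connected subquivers and each is acyclic, so $\Rep(Q,\FF_1)_{\nil}$ has finitely many indecomposables. To prove the claim, note that a tree has no loops, so the winding axiom forces $c$ to be injective on the set of arrows incident to any single vertex $u\in\Gamma$ --- two distinct such arrows with the same image would share a source, share a target, or produce a loop at $c(u)$, each impossible. Thus $c$ is locally injective as a map of the underlying undirected graphs, hence carries any reduced (non-backtracking) walk in $\Gamma$ to a reduced walk of the same length in $Q$. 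Since the only reduced closed walk in a tree is trivial (a reduced walk in a tree is a simple path), and any two distinct vertices of the connected graph $\Gamma$ are joined by a reduced walk of positive length, the image of such a walk is a nontrivial reduced walk in $Q$, which cannot be closed; therefore $c$ separates vertices. It then separates arrows as well, because two distinct arrows with a common image would --- by injectivity on vertices --- have a common source, contradicting the winding axiom.

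Now suppose $Q$ is connected but not a tree, so $\operatorname{rank}H_1(Q,\ZZ)\ge 1$ and $\overline Q$ contains a cycle; pick edges $e_1,\dots,e_m$ of $\overline Q$ forming such a cycle (a single loop if $m=1$) and let $C\subseteq Q$ be the subquiver they span. For each $N\ge 1$, let $\Gamma^{(N)}$ be the quiver with underlying graph the path on vertices $0,1,\dots,Nm$ in which the edge between $i$ and $i+1$ is oriented so that the evident map $\Gamma^{(N)}\to C$ sends it to $e_{(i\bmod m)+1}$; this map ``wraps around the cycle $N$ times.'' It is a winding: two distinct arrows of $\Gamma^{(N)}$ lying over the same $e_j$ occur at positions differing by a nonzero multiple of $m$, hence have distinct sources and distinct targets. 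Composing with the inclusion $C\hookrightarrow Q$ (a composite of windings is again a winding) and using that $\Gamma^{(N)}$ is connected and acyclic, we obtain an indecomposable nilpotent representation of $Q$ of dimension $Nm+1$; these have pairwise distinct dimensions, so $\NI_Q(n)\neq 0$ for infinitely many $n$ and $Q$ has infinite representation type. This proves $(1)\Rightarrow(2)$.

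Finally, $\wild_0$ (one vertex, no arrows) has $\NI_{\wild_0}(1)=1$ and $\NI_{\wild_0}(n)=0$ for $n\neq 1$, since the only indecomposable $\FF_1$-vector space is the $1$-dimensional one. Hence $\wild_0\le_{\nil}Q$ holds for every $Q$ ($\NI_{\wild_0}$ being eventually $0$), while $Q\le_{\nil}\wild_0$ --- i.e.\ $\NI_Q(n)\le x\,\NI_{\wild_0}(yn)$ for $n\gg 0$ and some $x\in\mathbb{R}_{>0}$, $y\in\NN$ --- holds precisely when $\NI_Q(n)=0$ for $n\gg 0$, that is, precisely when $(1)$ holds. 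Therefore $Q\approx_{\nil}\wild_0\iff Q\le_{\nil}\wild_0\iff(1)$, which together with $(1)\Leftrightarrow(2)$ proves the theorem. The one genuinely non-formal step is the injectivity claim in $(2)\Rightarrow(1)$: its content is that a winding into a loopless quiver is locally injective on underlying graphs, so the acyclicity of a tree propagates to injectivity of $c$; everything else is bookkeeping with the $\mathcal{C}_Q$ dictionary.
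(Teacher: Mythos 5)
This statement is quoted from \cite[Theorem 5.3]{jun2020quiver} and is not proved anywhere in the present paper, so there is no in-paper argument to compare yours against; judged on its own terms, your proof is correct and self-contained. The structure is sound: finiteness of each $\NI_Q(n)$ (partial matchings between fibres) reduces ``finite representation type'' to ``$\NI_Q(n)=0$ for $n\gg 0$''; for $(2)\Rightarrow(1)$ your key claim --- that a winding from a connected quiver into a loopless quiver is locally injective on underlying graphs, hence carries reduced walks to reduced walks, hence is an embedding when the target is a tree --- is exactly the right non-formal ingredient, and your case analysis (shared source, shared target, or a forced loop at $c(u)$) covers all possibilities; for $(1)\Rightarrow(2)$ the ``wrap-around'' windings $\Gamma^{(N)}\to C\hookrightarrow Q$ are acyclic, connected, of pairwise distinct dimensions $Nm+1$, and the winding condition is verified correctly (preimages of a fixed $e_j$ sit at positions congruent mod $m$); and $(1)\Leftrightarrow(3)$ is, as you say, immediate from $\NI_{\wild_0}$ being eventually zero together with Definition \ref{definition: order, equivalence relation}. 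It is worth noting that your wrap-around family is essentially the same mechanism this paper exploits later (the ``spine'' of \eqref{eq: treeq} winding around the central cycle), so your argument is very much in the spirit of the surrounding text even though the cited original proof is not reproduced here.
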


Recall that a quiver $Q$ is of bounded representation type over $\FF_1$ if there exists a positive number $M$ such that $\NI_Q(n) \leq M$ for $n\gg 0$. We have the following. 

\begin{mythm}\cite[Theorem 5.14]{jun2020quiver}
	Let $Q$ be a connected quiver. The following are equivalent.  
\begin{enumerate} 
\item $Q$ is of bounded type over $\FF_1$.
\item $Q \le_{\nil} \wild_1$.
\item $Q$ is either a tree or a cycle quiver.
\end{enumerate}
Moreover, $Q$ is a cycle quiver if and only if $Q \approx_{nil} \wild_1$. 
\end{mythm}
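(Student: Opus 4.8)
The plan is to prove $(3)\Rightarrow(1)$, $(1)\Leftrightarrow(2)$, and $(1)\Rightarrow(3)$, and then deduce the ``moreover'' clause, working throughout with the equivalence between $\Rep(Q,\FF_1)_{\nil}$ and the category of windings $c:\Gamma\to Q$ with $\Gamma$ acyclic. Two consequences of this dictionary are used repeatedly: since $\Gamma_{A\oplus B}=\Gamma_A\sqcup\Gamma_B$, an $\FF_1$-representation is indecomposable if and only if its coefficient quiver is connected; and $M\cong N$ if and only if $\Gamma_M\cong\Gamma_N$ as windings. The first thing I would do is compute $\NI_{\wild_1}$. A winding $c:\Gamma\to\wild_1$ forces every vertex of $\Gamma$ to have at most one incoming and one outgoing arrow, so $\Gamma$ is a disjoint union of oriented paths and oriented cycles; acyclicity kills the cycles and connectedness leaves a single oriented path, whence $\NI_{\wild_1}(n)=1$ for all $n\ge 1$. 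Then $(1)\Leftrightarrow(2)$ is a tautology: $Q\le_{\nil}\wild_1$ says $\NI_Q(n)=O(\NI_{\wild_1}(yn))=O(1)$ for some $y$, which is precisely bounded type.

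For $(3)\Rightarrow(1)$: a tree has finite representation type (by the characterization of finite type recalled above), hence bounded type. Suppose $C$ is a cycle quiver on $m$ vertices. A short argument with the winding condition shows that every vertex of a coefficient quiver $c:\Gamma\to C$ has degree at most $2$ --- the two arrows of $C$ at a vertex are distinct, and a winding allows at most one $\Gamma$-arrow of each colour at a given vertex --- so a connected such $\Gamma$ is a path or an undirected cycle. In the path case the underlying walk in $C$ is non-backtracking, hence is determined once a starting vertex and an initial direction are chosen; this yields at most $2m$ such coefficient quivers with $n$ vertices. In the cycle case $\Gamma$ wraps around $C$ an integer number of times, so $m\mid n$ and there are boundedly many. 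Hence $\NI_C(n)=O(1)$ and $C$ has bounded type. Since moreover a path-type coefficient quiver with $n$ vertices exists for every $n\ge 1$, we get $\NI_C(n)\ge 1=\NI_{\wild_1}(n)$, so $\wild_1\le_{\nil}C$ and therefore $C\approx_{\nil}\wild_1$.

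The heart of the proof is $(1)\Rightarrow(3)$, which I would establish contrapositively: if $Q$ is connected, not a tree, and not a cycle quiver, then $\NI_Q$ is unbounded. Since $Q$ is not a tree it contains a subquiver $C$ whose underlying graph is a simple cycle; since $Q$ is connected and not a cycle quiver, $\overline{Q}\ne\overline{C}$, so some arrow $e\notin C$ is incident to a vertex $v$ of $C$. Now, for each large $n$, I would build many indecomposables as follows. Let $\Gamma_0$ be the ``spine'' obtained by restricting the universal cover of $C$ to a segment of $N$ vertices, viewed as a winding onto $C\subseteq Q$: it is an oriented path or a zig-zag, automatically acyclic, with roughly $N/m$ vertices lying over $v$. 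At each vertex of $\Gamma_0$ over $v$, the winding condition still permits attaching one fresh pendant vertex joined by an $e$-coloured arrow (oriented so $v$ is a source or sink of $e$; if $e$ is a loop at $v$, the pendant is attached as a genuine pendant, not a loop, so no directed cycle appears). For an arbitrary subset $S$ of the $v$-slots, attaching a pendant at each slot in $S$ gives a connected acyclic winding $\Gamma_S\to Q$, i.e. a nilpotent indecomposable of dimension $N+|S|$; and a coefficient isomorphism $\Gamma_S\to\Gamma_{S'}$ must carry the spine to the spine and pendants to pendants, so distinct subsets yield pairwise non-isomorphic representations up to the identifications induced by the at most two automorphisms of $\Gamma_0$. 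Choosing $N$ so that $k:=n-N$ is about half of the number $t\approx N/m$ of $v$-slots, the number of subsets of that size is $\binom{t}{k}\ge c^{\,n}$ for a constant $c>1$ and all $n\gg 0$. Hence $\NI_Q(n)\to\infty$, so $Q$ is not of bounded type. The ``moreover'' clause then follows: a cycle quiver $C$ satisfies $C\approx_{\nil}\wild_1$ by the previous paragraph; conversely $Q\approx_{\nil}\wild_1$ forces bounded type, hence (by $(1)\Leftrightarrow(3)$) $Q$ is a tree or a cycle quiver, and a tree is impossible because then $\NI_Q$ is eventually $0$ whereas $\NI_{\wild_1}(n)=1$ for $n\ge1$, so $\wild_1\not\le_{\nil}Q$.

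The two points I expect to require the most care are: (i) the structural analysis of coefficient quivers over a cycle quiver --- that they are paths or cycles and that the associated walk is non-backtracking --- since this is exactly what caps the growth of cycle quivers at $O(1)$ and feeds both $(3)\Rightarrow(1)$ and the ``moreover'' clause; and (ii) verifying that the pendant-attachment construction genuinely produces exponentially many pairwise non-isomorphic nilpotent indecomposables for \emph{every} orientation of $C$ and \emph{every} position of $e$ (a loop at $v$, a chord of $C$, or a true pendant edge) --- in particular that the winding condition is never violated, that acyclicity is preserved, and that $\binom{t}{k}$ really is exponential after dividing by the bounded automorphism ambiguity. Everything else is bookkeeping with the definitions of $\NI$, $\le_{\nil}$, and $O(\cdot)$.
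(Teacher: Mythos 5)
This statement is quoted in the paper as a known result from \cite{jun2020quiver} and is not proved in the present document, so there is no internal proof to compare against; judged on its own terms, your argument is essentially correct and is in the same combinatorial spirit as the paper's Section 3 (counting coefficient quivers directly). Your computation $\NI_{\wild_1}(n)=1$, the tautological $(1)\Leftrightarrow(2)$, the degree-$\le 2$ analysis of windings over a cycle quiver (paths given by non-backtracking walks, plus boundedly many cycle-shaped windings), and the pendant-attachment construction giving $\binom{t}{k}/2 \ge c^n$ pairwise non-isomorphic nilpotent indecomposables all check out: the winding condition is respected because the $e$-coloured pendant arrows have pairwise distinct sources and targets and all spine arrows carry colours from $C_1\ne\{e\}$, acyclicity is clear, and a coefficient isomorphism must send pendants to pendants since those are exactly the vertices with no incident $C_1$-coloured arrow, so it induces one of at most two automorphisms of the spine. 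Two small points deserve explicit mention: since quivers here may have loops and parallel arrows, ``$Q$ is not a tree'' only guarantees a cycle subquiver $C$ of length $1$ or $2$ in the multigraph sense, so your analysis should be stated to cover $m=1$ (a loop, where consecutive same-coloured spine arrows are fine because sources and targets still differ) and $m=2$ (parallel arrows), which it does with no real change; and in the ``moreover'' direction you correctly use that a tree has finite type, hence $\NI_Q(yn)=0$ eventually, to rule out $\wild_1\le_{\nil}Q$. With those cases spelled out, the proof is complete; whether it coincides with the argument of \cite[Theorem 5.14]{jun2020quiver} cannot be determined from this paper, but it is a valid self-contained derivation consistent with the techniques the paper uses for proper pseudotrees.
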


Finally, we have the following showing that $\wild_2$ is a ``universal bound.''

\begin{mythm}\cite[Theorem 4.6, Proposition 5.4]{jun2020quiver}
For any connected quiver $Q$, we have $Q \leq_{\nil} \wild_2$. Furthermore, $Q\approx_{\nil} \wild_2$ if $\operatorname{rank}H_1(Q,\mathbb{Z}) \geq 2$.
\end{mythm}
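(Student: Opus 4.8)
The plan is to argue entirely on the level of coefficient quivers. By the equivalence recalled above, together with the fact that a nilpotent $\FF_1$-representation is indecomposable precisely when its coefficient quiver is connected (which one extracts from Lemma \ref{lemma: lemma from p1}(3): a disconnected coefficient quiver splits off a direct summand, and conversely), the quantity $\NI_Q(n)$ counts connected acyclic windings $c:\Gamma\to Q$ on $n$ vertices up to coefficient isomorphism. Both inequalities $Q\le_{\nil}\wild_2$ and (when $\operatorname{rank}H_1(Q,\ZZ)\ge 2$) $\wild_2\le_{\nil}Q$ will be obtained by constructing an explicit assignment on such windings which is injective on isomorphism classes and multiplies the number of vertices by a constant depending only on $Q$; a constant blow-up is all that $\le_{\nil}$ requires, and arranging for the blow-up to be an \emph{exact} constant (rather than merely bounded by one) lets us avoid any appeal to regularity properties of the function $\NI_{\wild_2}$.

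For $Q\le_{\nil}\wild_2$: if $|Q_1|\le 1$ then $Q$ is a tree or a single loop, $\NI_Q$ is eventually bounded, and the inequality is immediate since $\NI_{\wild_2}$ is unbounded (all $2^{n-1}$ edge-colourings of a directed path on $n$ vertices are pairwise non-isomorphic $\wild_2$-windings). So suppose $m:=|Q_1|\ge 2$ and put $p:=|Q_0|$. Given a connected acyclic $Q$-winding $\Gamma$ on $n$ vertices, first ``collapse'' $Q$ to a bouquet: relabel the colours in $Q_1$ as $m$ of the loops of $\wild_{m+2}$, and to each vertex $x$ of $\Gamma$ lying over the $i$-th vertex of $Q_0$ attach a directed tail of length $\lceil\log_2(p+1)\rceil$ whose edges, coloured by the two remaining loops, spell out the binary expansion of $i$. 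This produces a connected acyclic $\wild_{m+2}$-winding on $n\bigl(1+\lceil\log_2(p+1)\rceil\bigr)$ vertices from which $\Gamma$, together with its structure map to $Q$, is recovered (the original vertices are exactly those with no outgoing tail-coloured edge, and reading each tail backwards returns the $Q_0$-label). Next, encode $\wild_{m+2}$ into $\wild_2$: replace every vertex by a directed ``port path'' of $m+2$ vertices all coloured $\alpha$, and replace every $c$-coloured arrow $x\to y$ (with $1\le c\le m+2$) by a single $\beta$-arrow from the $c$-th vertex of the port path of $x$ to the $c$-th vertex of the port path of $y$. Because at most one arrow of each colour leaves (respectively enters) a given vertex, this is a winding for $\wild_2$; it is visibly acyclic and connected, and the source winding is recovered by contracting the $\alpha$-path components and reading off the port index of each $\beta$-arrow. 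Composing the two encodings gives, with $y:=(m+2)\bigl(1+\lceil\log_2(p+1)\rceil\bigr)$, an injection from isomorphism classes of $n$-dimensional nilpotent indecomposables of $Q$ to those of dimension $yn$ of $\wild_2$; hence $\NI_Q(n)\le\NI_{\wild_2}(yn)$ and $Q\le_{\nil}\wild_2$.

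For the converse when $\operatorname{rank}H_1(Q,\ZZ)\ge 2$: fix a spanning tree $T$ of $Q$; since $|Q_1|-|Q_0|+1\ge 2$, there are at least two arrows $\varepsilon_1\ne\varepsilon_2$ of $Q$ not lying in $T$. Given a connected acyclic $\wild_2$-winding $\Gamma$ on $n$ vertices (colours $\alpha,\beta$), build a $Q$-winding $\Delta$: for each vertex $x$ of $\Gamma$ take a disjoint copy $T^{(x)}$ of $T$, viewed as a coefficient quiver of $Q$ via $T\hookrightarrow Q$ (so $T^{(x)}$ has one vertex over each vertex of $Q$); for each $\alpha$-arrow $x\to y$ of $\Gamma$ add an $\varepsilon_1$-coloured arrow from the $s(\varepsilon_1)$-vertex of $T^{(x)}$ to the $t(\varepsilon_1)$-vertex of $T^{(y)}$; symmetrically add an $\varepsilon_2$-coloured arrow for each $\beta$-arrow. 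Since $\Gamma$ is acyclic it has no loops and no two like-coloured arrows sharing a source or a target, and $\varepsilon_1,\varepsilon_2\notin T_1$, so $\Delta\to Q$ is a winding; a directed cycle in $\Delta$ would project to a nontrivial directed closed walk in $\Gamma$, so $\Delta$ is acyclic, and it is connected because $\Gamma$ is. Fixing a vertex $v_0\in Q_0$ in advance, the copies $T^{(x)}$ are exactly the connected components of the subquiver of $\Delta$ spanned by the $T_1$-coloured arrows, the unique vertex of $T^{(x)}$ over $v_0$ recovers $x$, and the $\varepsilon_1$- and $\varepsilon_2$-arrows recover the $\alpha$- and $\beta$-arrows of $\Gamma$; thus $\Gamma\mapsto\Delta$ is injective on isomorphism classes, with $\dim\Delta=|Q_0|\,n$. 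Therefore $\NI_{\wild_2}(n)\le\NI_Q(|Q_0|\,n)$, i.e. $\wild_2\le_{\nil}Q$, and combined with the first part $Q\approx_{\nil}\wild_2$.

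The step I expect to require the most care is making the two encodings genuinely injective on isomorphism classes: a coefficient isomorphism over $\wild_2$ (or over $\wild_{m+2}$) is blind to all the vertex- and arrow-labellings coming from $Q$, so the tails and port paths must be designed so that the full $Q$-structure can be reconstructed unambiguously from the bare coloured quiver — while simultaneously every auxiliary gadget has a size depending only on $Q$, so that the dimension grows by an exact constant factor. Once those designs are pinned down, the verifications that the resulting quivers are windings, acyclic, and connected, and the reconstruction arguments, are routine.
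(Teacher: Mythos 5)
Your argument is sound, but note first that this statement is only quoted here from \cite[Theorem 4.6, Proposition 5.4]{jun2020quiver}; the present paper contains no proof of it, so there is nothing internal to compare against line by line. Judged on its own terms, your proof works, and its overall strategy (explicit injections on coefficient quivers with an exact constant blow-up of the dimension, so that $\NI_Q(n)\le \NI_{\wild_2}(yn)$ with $x=1$) is very much in the spirit of the counting arguments used throughout this paper and its predecessors. Both encodings check out: in the first, collapsing $Q$ to a bouquet preserves the winding condition verbatim (it is a condition on same-coloured arrows of $\Gamma$, independent of the target quiver), the label tails and port paths keep each vertex with at most one in- and one out-arrow of each auxiliary colour, and the reconstruction procedures (original vertices are detected by the tail colours, port paths are the $\alpha$-components, the loop colour is the port index) are invariant under coefficient isomorphism, which is exactly what injectivity on isomorphism classes requires. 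In the second, the winding condition for the $\varepsilon_i$-coloured arrows of $\Delta$ is inherited from the winding condition for the $\alpha$- and $\beta$-arrows of $\Gamma$, and recovering $\Gamma$ from the $T_1$-coloured components of $\Delta$ is likewise isomorphism-invariant. Two small points to tighten when writing this up: (i) fix the orientation convention for the label tails once and for all (your recovery clause ``no outgoing tail-coloured edge'' presumes the tails are directed \emph{into} the original vertices; the construction works either way, but the detection rule must match the choice); and (ii) in the acyclicity arguments, a directed cycle of $\Delta$ (resp.\ of the port-path quiver) could a priori be contained in a single copy of $T$ (resp.\ a single port path), projecting to a trivial closed walk — this case is excluded because an oriented tree, respectively a directed path, has no directed cycles, and it is worth saying so explicitly. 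A pleasant feature of your route is that the exact (rather than merely bounded) dimension scaling means you never need any monotonicity or regularity of $\NI_{\wild_2}$, only the definition of $\le_{\nil}$.
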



\subsection{Hall algebras and Lie algebras of $\FF_1$-representations} \label{subsection: Hall algebras}

In this subsection, we briefly recall the Hall algebras of quiver representations over $\FF_1$. For details, we refer the reader to \cite{jun2020quiver, jun2021coefficient, szczesny2011representations}. 

Let $\textrm{Iso}(Q)$ be the set of isomorphism classes of $\FF_1$-representations of $Q$. The underlying set of is the following:
\begin{equation}\label{eq: hall algebra}
	H_Q:=\{f:\textrm{Iso}(Q) \to \mathbb{C} \mid f([M])=0\textrm{ for all but finitely many $[M]$.}\}
\end{equation}
The \emph{support} of an element $f \in H_Q$ is the set $\operatorname{supp}(f) = \{ [M] \in \textrm{Iso}(Q)\mid f([M]) \neq 0 \}$. For each $[M] \in \textrm{Iso}(Q)$, let $\delta_{[M]}$ be the delta function in $H_Q$ supported at $[M]$. In particular, we consider $H_Q$ as the vector space spanned by $\{\delta_{[M]}\}_{[M] \in \textrm{Iso}(Q)}$ over $\mathbb{C}$. To ease the notation, we will denote the delta function $\delta_{[M]}$ by $[M]$. One defines the following multiplication on the elements in $\textrm{Iso}(Q)$:
\begin{equation}\label{eq: hall product}
	[M]\cdot[N]:=\sum_{R \in \textrm{Iso}(Q)} \frac{a^R_{M,N}}{a_Ma_N}[R],
\end{equation}
where $a_M=|\textrm{Aut}(M)|$ and $a^R_{M,N}$ is the number of ``short exact sequences'' of the form:\footnote{For a given morphism $\varphi$ between $\FF_1$-vector spaces, $\ker(\varphi)$ and $\textrm{coker}(\varphi)$ are defined analogous to the classical case. By a short exact sequence we mean that $\ker=\textrm{coker}$. See \cite{jun2020quiver} for details.}
\[
0 \to N \to R \to M \to 0.
\]
Then, one can easily check the following equality as in the classical case:
\[
\frac{a^R_{M,N}}{a_Ma_N}=|\{L \leq R \mid L\simeq N\textrm{ and } R/L \simeq M\}|.
\]
By linearly extending the multiplication \eqref{eq: hall product} to $H_Q$, we obtain an associative algebra $H_Q$ over $\mathbb{C}$. Moreover, one may check that $H_Q$ is also equipped with the coproduct defined as follows:
\begin{equation}\label{eq: hall coprod}
	\Delta:H_Q\to H_Q\otimes_\mathbb{C}H_Q, \quad \Delta(f)([M],[N])=f([M\oplus N]).
\end{equation}

The Hall algebra $H_Q$ is graded, connected, cocommutative Hopf algebras, and hence by Milnor-Moore Theorem, $H_Q$ is isomorphic to the universal enveloping algebras of its Lie subalgebras $\mathfrak{n}_Q$ of primitive elements, i.e., indecomposable representations. In particular, one can describe the algebraic structure of $H_Q$ in terms of certain ``stacking operations'' of coefficient quivers; See \cite{jun2020quiver} for details. Note that for the Hall algebra $H_{Q,\nil}$ of nilpotent representations, the same statements hold. In particular, $H_{Q,\nil}$ is the universal enveloping algebra of the Lie algebra $\mathfrak{n}_{Q,\nil}$ of nilpotent indecomposable representations.

\subsection{Gradings and nice sequences}

We will recall some basic definitions from \cite{jun2021coefficient}. By a grading on a quiver $\Gamma$, we simply mean a function $\partial:\Gamma_0 \to \mathbb{Z}$. We will be interested in a finite sequence of gradings on $\Gamma$ satisfying some conditions. The existence of such a sequence of gradings allows one to compute the Euler characteristic of certain quiver Grassmannians in a purely combinatorial way. 

\begin{mydef}\label{definition: winding}
Let $c:\Gamma \to Q$ be a winding.
\begin{enumerate}
\item 
 A nice grading is a grading $\partial:\Gamma_0 \to \mathbb{Z}$ such that for $\alpha,\beta \in \Gamma_1$ if $c(\alpha)=c(\beta)$, then 
 \begin{equation}\label{equation: winding}
 \partial(s(\alpha))-\partial(t(\alpha)) = \partial(s(\beta))-\partial(t(\beta))
 \end{equation}
\item 
A nice grading $\partial$ is said to be \emph{non-degenerate} if for each $\alpha \in Q_1$ and $\beta \in \Gamma_1$ such that $c_1(\beta)=\alpha$, one has $\partial (t(\beta)) - \partial (s(\beta)) \neq 0$. 
\item 
Let $\{\partial_0,\dots,\partial_i\}$ be gradings on $\Gamma$. A grading $\partial$ is $\{\partial_0,\dots,\partial_k\}$-nice if for each $\alpha,\beta \in \Gamma_1$, the following conditions:
\begin{equation}\label{eq: condition}
 c(\alpha)=c(\beta)\quad\textrm{and}\quad  \partial_i(s(\alpha))=\partial_i(s(\beta)), \quad \partial_i(t(\alpha))=\partial_i(t(\beta)) \quad \forall i=1,\dots k.
\end{equation}
imply that 
\begin{equation}
 \partial(s(\alpha))-\partial(t(\alpha)) = \partial(s(\beta))-\partial(t(\beta))
\end{equation}
\item 
A sequence $\{ \partial\} := \{\partial_i\}_{i \in \mathbb{N}}$ is a nice sequence if for each $i$, $\partial_i$ is $(\partial_1,\dots,\partial_{i-1})$-nice and $\partial_1$ is a nice grading. 
\end{enumerate}
\end{mydef}

\begin{mydef}\label{definition: disting}
Let $c:\Gamma \to Q$ be a winding and $\{\partial\}$ be a nice sequence on $\Gamma$. $\{\partial\}$ is said to distinguish the vertices of $\Gamma$ if for each $u,v \in \Gamma_0$ there exists $\partial_i$ such that $\partial_i(u) \neq \partial_i(v)$. In particular, a nice grading $\partial$ distinguishes vertices of $\Gamma$ if for each $u \neq v \in \Gamma_0$, $\partial(u) \neq \partial (v)$. 
\end{mydef}

\begin{mythm}\cite[Proposition 5.2]{jun2021coefficient}\label{theorem: quiver grass thm}
Let $Q$ be a quiver and $M$ be an $\FF_1$-representation of $Q$; let $c:\Gamma \to Q$ be the corresponding winding. Suppose that $\Gamma$ has a nice sequence distinguishing the vertices of $\Gamma$. Then for any dimension vector $\underline{d} \leq \underline{\dim}(M)$, the Euler characteristic $\chi_{\underline{d}}(M_\mathbb{C})$ of the quiver Grassmannian $\textrm{Gr}^Q_{\underline{d}}(M_\mathbb{C})$ is the cardinality of following set:
\begin{equation}
\{N \leq M \mid \underline{\dim}(N)=\underline{d}\}
\end{equation}
\end{mythm}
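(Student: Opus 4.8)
The plan is to reduce the computation of $\chi_{\underline{d}}(M_\mathbb{C})$ to a point count via a torus action, following the strategy of Irelli–Cerulli and Haupt as adapted in \cite{jun2021coefficient}. First I would fix the standard basis of $M_\mathbb{C}$ coming from the vertices of $\Gamma$ (that is, the nonzero elements of the pointed sets $M_v$), so that each linear map $(f_\alpha)_\mathbb{C}$ is a partial permutation matrix in this basis. A nice grading $\partial$ on $\Gamma$ should then be used to define a one-parameter subgroup $\lambda_\partial : \mathbb{G}_m \to \mathrm{GL}(M_\mathbb{C})$ acting on the basis vector $e_u$ by $t \cdot e_u = t^{\partial(u)} e_u$; the nice-grading condition \eqref{equation: winding} is exactly what guarantees that this action commutes with the structure maps $(f_\alpha)_\mathbb{C}$ up to an overall scalar depending only on $c(\alpha)$, hence descends to a well-defined action on the quiver Grassmannian $\mathrm{Gr}^Q_{\underline{d}}(M_\mathbb{C})$ (one has to absorb the per-arrow scalars by rescaling within a torus of rank $|Q_1|$, which is the usual bookkeeping). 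Given a \emph{sequence} of gradings $\{\partial_i\}$ rather than a single one, I would instead build a torus $T = \mathbb{G}_m^N$ (one factor per grading used) acting diagonally with weights $(\partial_1(u),\ldots,\partial_N(u))$ on $e_u$; the nice-sequence conditions \eqref{eq: condition} ensure compatibility at each stage.

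The second step is the fixed-point analysis. Since $\mathrm{Gr}^Q_{\underline{d}}(M_\mathbb{C})$ is a projective variety with a torus action, its Euler characteristic equals the Euler characteristic of the fixed-point locus $\mathrm{Gr}^Q_{\underline{d}}(M_\mathbb{C})^{T}$. A point of the Grassmannian is a subrepresentation $N \subseteq M_\mathbb{C}$ with $\underline{\dim}(N) = \underline{d}$; it is fixed by $T$ precisely when each $N_v$ is a coordinate subspace of $(M_\mathbb{C})_v$ with respect to the chosen basis — here is where the hypothesis that $\{\partial\}$ \emph{distinguishes vertices} is essential, because distinct basis vectors $e_u, e_v$ then have distinct $T$-weights, so the only $T$-stable subspaces are coordinate subspaces. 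Thus $T$-fixed subrepresentations of $M_\mathbb{C}$ are in bijection with subsets $S$ of $\Gamma_0$ that are closed under the partial permutations $(f_\alpha)_\mathbb{C}$, i.e. successor-closed full subquivers of $\Gamma$; by Lemma \ref{lemma: lemma from p1}(1) these are exactly the coefficient quivers of the $\FF_1$-subrepresentations $N \leq M$, and the dimension-vector condition matches. Hence the fixed locus is a finite set of isolated reduced points, one for each $N \leq M$ with $\underline{\dim}(N) = \underline{d}$.

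The third step is to conclude: a finite set of $r$ reduced points has Euler characteristic $r$, so $\chi_{\underline{d}}(M_\mathbb{C}) = \#\{N \leq M \mid \underline{\dim}(N) = \underline{d}\}$, which is the claimed cardinality. I should also check that the fixed points are genuinely isolated and reduced rather than merely that the underlying set is finite; this follows from the weight decomposition, since the tangent space to the Grassmannian at a fixed point decomposes into $T$-weight spaces with no zero weight (again using the distinguishing hypothesis to rule out weight-zero directions), so each fixed point is a nondegenerate isolated fixed point. The main obstacle I anticipate is purely bookkeeping rather than conceptual: verifying carefully that the per-arrow scalar ambiguity in the naive $\mathbb{G}_m$-action can be uniformly absorbed — equivalently, that the torus actually acts on the quiver Grassmannian and not merely on the ambient product of ordinary Grassmannians — and keeping track of this when one uses a whole nice \emph{sequence} of gradings rather than a single nice grading. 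Since Theorem \ref{theorem: quiver grass thm} is quoted from \cite[Proposition 5.2]{jun2021coefficient}, in the paper itself this verification can be cited rather than reproduced, but in a self-contained account it is the step that requires genuine care.
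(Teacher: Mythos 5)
The statement you are proving is not proved in this paper at all: it is quoted verbatim from \cite[Proposition 5.2]{jun2021coefficient}, whose proof follows the torus-action strategy of \cite{irelli2011quiver} and \cite{haupt2012euler}. Your overall plan (diagonal torus built from the gradings, fixed points are coordinate subrepresentations, Euler characteristic equals the number of fixed points) is the right strategy and, in the case of a single nice grading that already distinguishes vertices, your argument is essentially Cerulli Irelli's and is correct.

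However, there is a genuine gap in your central step for an actual nice \emph{sequence}: you cannot package the gradings into one torus $T=\mathbb{G}_m^N$ acting on $\mathrm{Gr}^Q_{\underline{d}}(M_\mathbb{C})$. Only $\partial_1$ is unconditionally nice; for $i\geq 2$ the grading $\partial_i$ is merely $\{\partial_1,\dots,\partial_{i-1}\}$-nice, i.e.\ the equality of degree differences in \eqref{eq: condition} is imposed only on pairs of arrows over the same arrow of $Q$ whose endpoints agree under all earlier gradings. Hence conjugating $(f_\alpha)_\mathbb{C}$ by the one-parameter subgroup attached to $\partial_i$ rescales different matrix entries of $f_\alpha$ by \emph{different} powers of $t$; it is not $t^{d_\alpha}f_\alpha$, the per-arrow scalar cannot be absorbed, and this subgroup need not preserve the quiver Grassmannian at all (when $\Gamma$ has cycles the rescaled representation need not even be isomorphic to $M_\mathbb{C}$). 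Consequently the weight argument ``distinct basis vectors have distinct $T$-weights, so fixed subspaces are coordinate subspaces'' has no torus to apply to, and the same problem undermines your tangent-space/nondegeneracy remark. The correct route, which is the one taken in \cite{jun2021coefficient} following \cite{haupt2012euler}, is iterative: use $\partial_1$ to get a $\mathbb{C}^*$-action, identify each component of the fixed locus as a quiver Grassmannian with the same coefficient quiver but over a refined quiver whose arrows record the $\partial_1$-degrees; condition \eqref{eq: condition} says exactly that $\partial_2$ is a nice grading for this refined winding, so the step can be repeated, and the hypothesis that the sequence distinguishes vertices guarantees that after finitely many steps the fixed locus is finite, its points being the coordinate subrepresentations, i.e.\ the $\FF_1$-subrepresentations $N\leq M$ with $\underline{\dim}(N)=\underline{d}$. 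Two minor points: finiteness of the fixed set already yields $\chi=\#\{\text{fixed points}\}$ (no reducedness argument is needed), and in this paper's terminology those fixed points correspond to \emph{predecessor-closed} subquivers (cf.\ Example \ref{example: not band}); your ``successor-closed'' is Haupt's convention, which is only a naming discrepancy.
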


The following example shows that our results are more general than Haupt \cite{haupt2012euler}.

\begin{myeg}\cite[Example 3.7]{haupt2012euler} \label{example: not band}
	Consider the following quiver:
	\[
	Q=\left(
	\begin{tikzcd}
		1 \arrow[r,"\alpha"] & 2 \arrow[r,bend right = 60,swap, "\beta_4"] \arrow[r,shift right,swap,"\beta_3"]\arrow[r,shift left,"\beta_2"] \arrow[r,bend left=60, "\beta_1"]& 3
	\end{tikzcd}
	\right)
	\]
	Consider the following coefficient quiver $\Gamma_M$:
	\[
	F:\Gamma_M=\left(
	\begin{tikzcd}
		& &  & 3 \\
		1\arrow[r,"\alpha"]	&2   \arrow[rru,"\beta_1"] \arrow[rrd,swap,"\beta_4"]&2' \arrow[ru,swap,"\beta_2"]  \arrow[rd,"\beta_3"]&\\
		&&& 3'	
	\end{tikzcd}
	\right)
	\to Q=\left(
	\begin{tikzcd}
		1 \arrow[r,"\alpha"] & 2 \arrow[r,bend right = 60,swap, "\beta_4"] \arrow[r,shift right,swap,"\beta_3"]\arrow[r,shift left,"\beta_2"] \arrow[r,bend left=60, "\beta_1"]& 3
	\end{tikzcd}
	\right)
	\]
	Then, $M_\mathbb{C}$ is not a tree, band, nor thin. But, notice that $\Gamma_M$ is a pseudotree and hence we can apply theorems in \cite{jun2021coefficient} to compute the following:
	\[
	\chi_{(0,1,2)}^Q(M_\mathbb{C})=2
	\]
	since there are two predecessor closed subquivers of $\Gamma_M$ corresponding to $(0,1,2)$ as follows:
	\[
	\begin{tikzcd}
		3 & 2 \arrow[l,swap, "\beta_1"] \arrow[r,"\beta_4"] & 3'
	\end{tikzcd}, \quad \begin{tikzcd}
		3 & 2' \arrow[l,swap,"\beta_2"] \arrow[r,"\beta_3"] & 3'
	\end{tikzcd}
	\]
\end{myeg}

\section{Growth of $\FF_1$-representations} \label{section: growth}

In this section, we prove Theorem A. In Subsection \ref{subsection: one loop}, we first investigate a special case of rooted trees with one loop, and then use this to study the general case of pseudotrees in subsequent Subsections \ref{subsection: equiori} (equioriented case) and \ref{subsection: reversing arrows} (acyclic case). 

\subsection{Rooted trees with one loop}\label{subsection: one loop}

Let $T$ be a rooted tree, and $Q_T$ be the quiver obtained by identifying the vertex of $\wild_1$ and the root of $T$ as follows.
\begin{equation}\label{eq: rooted tree and a loop}
	Q_T:=\begin{tikzcd}
		T \arrow[d, no head,dashed]\\
		\bullet
		\arrow[out=240,in=300,loop,swap]
	\end{tikzcd}
\end{equation}
Let $S_k$ be the set of rooted subtrees of $T$ consisting of exactly $k$ vertices sharing the same root, and $s_k=|S_k|$. Coefficient quivers of $n$-dimensional indecomposable representations of $Q_T$ are of the following form:
\begin{equation}\label{eq: tree}
	\begin{tikzcd}[arrow style=tikz,>=stealth,row sep=2em]
		& T_i \arrow[d, no head, dashed] &  & T_\ell \arrow[d,no head, dashed] & & T_j \arrow[d,no head, dashed]  \\
		  & \bullet   \arrow[r] & \cdots  \arrow[r] & \bullet\arrow[r] & \bullet \arrow[r] & \bullet &
	\end{tikzcd}
\end{equation}
where $T_i$ is a rooted subtree of $T$ with exactly $i$ vertices, and the number of total vertices appearing in \eqref{eq: tree} is precisely $n$. Note that the root of each $T_i$ is identified with a vertex of the oriented path. In fact, this can be equivalently thought as follows: we consider a partition of the set $[n]=\{1\ldots , n\}$ as 
\[
[n] = \lambda_1 \sqcup \cdots \sqcup \lambda_r, 
\]
 where there exists a sequence $1 = n_1 \le n_2 \le \ldots \le n_r \le n_{r+1} = n$ such that $\lambda_i = \{ n_i, n_i+1,\ldots , n_{i+1} \}$ for all $i$. We first build a monochromatic oriented path $\lambda_1 \xrightarrow[]{\alpha} \lambda_2 \xrightarrow[]{\alpha}\cdots \xrightarrow[]{\alpha} \lambda_r$, where $\alpha$ is the loop in $\wild_1$. If $|\lambda_i| =a_i$, we then attach a rooted subtree $T_{a_i}$ whose number of vertices is exactly $a_i$ at $\lambda_i$ (including the vertex of the oriented path which is identified with the root of $T_i$). 
\begin{myeg}
Consider the following quivers.
\begin{equation}
	Q_T=\begin{tikzcd}[column sep =0.7em]
	\bullet \arrow[dr]	&	 & \bullet \arrow[dl] \\
&	\bullet \arrow[d] & \\\
	&	\bullet \arrow[out=240,in=300,loop,swap] &
	\end{tikzcd}, \qquad T= \begin{tikzcd}[column sep =0.7em]
	\bullet \arrow[dr]	&	 & \bullet \arrow[dl] \\
	&	\bullet \arrow[d] & \\
	&	\bullet &
\end{tikzcd}
\end{equation}
Then, we have
\[
T_1=  \begin{tikzcd}
	\textcolor{red}{\bullet} 
\end{tikzcd} \qquad T_2= \begin{tikzcd}
		\textcolor{blue}{\bullet} \arrow[d,blue]  \\
		\textcolor{blue}{\bullet}
\end{tikzcd} \qquad T_3=\begin{tikzcd}[column sep =0.7em]
\textcolor{purple}{\bullet}\arrow[dr,purple]	&	 \\
&	\textcolor{purple}{\bullet} \arrow[d,purple]  \\
&	\textcolor{purple}{\bullet}
\end{tikzcd} \qquad T_3'=\begin{tikzcd}[column sep =0.7em]
		 & \textcolor{green}{\bullet} \arrow[dl,green] \\
	\textcolor{green}{\bullet} \arrow[d,green] & \\
\textcolor{green}{\bullet} &
\end{tikzcd}
\]
\[
S_1=\{T_1\}, \quad S_2=\{T_2\}, \quad S_3=\{T_3,T_3'\}, \quad S_4=\{T\}. 
\]
Let $\mathbb{V}$ be a $12$-dimensional representation of $Q_T$ whose coefficient quiver is as follows:
\begin{equation}\label{eq: tree example}
	\begin{tikzcd}[arrow style=tikz,>=stealth,row sep=2em]
			&  & \textcolor{purple}{\bullet} \arrow[dr,purple] & & &  & \textcolor{green}{\bullet} \arrow[dl,green] \\	
		& \textcolor{blue}{\bullet} \arrow[d,blue] &  & \textcolor{purple}{\bullet} \arrow[d, purple] & & \textcolor{green}{\bullet} \arrow[d,green]  \\
		\textcolor{red}{\bullet}  \arrow[r]  & \textcolor{blue}{\bullet} \arrow[r] & \textcolor{red}{\bullet}  \arrow[r] & \textcolor{purple}{\bullet}\arrow[r] & \textcolor{red}{\bullet}  \arrow[r] & \textcolor{green}{\bullet} \arrow[r]& \textcolor{red}{\bullet} 
	\end{tikzcd}
\end{equation}
This can be identified with the following partition:
\[
\lambda_1={1}, \quad \lambda_2=\{2,3\}, \quad \lambda_3=\{4\}, \quad \lambda_4=\{5,6,7\},\quad  \lambda_5=\{8\}, \quad \lambda_6=\{9,10,11\},\quad  \lambda_7=\{12\},
\]
where we attach $T_1$ to each red vertices \{$\lambda_1,\lambda_3,\lambda_7$\}, $T_2$ to $\lambda_2$, $T_3$ to $\lambda_4$, and $T_3'$ to $\lambda_6$. 
\end{myeg}

Fix a rooted tree $T$ and set $Q = Q_T$. One may observe that $\NI_Q$ can be written recursively (by considering the size $a_r$ of the last partition $\lambda_r$ of $\lambda=\lambda_1,\dots,\lambda_r$) as follows:

\begin{equation}\label{eq: recursion}
\NI_Q(n)=s_1\NI_Q(n-1)+s_2\NI_Q(n-2)+s_3\NI_Q(n-3)+ \cdots + s_{t}\NI_Q(n-t), \quad \forall n \geq 2t,
\end{equation}
where $t$ is the number of vertices in a tree $T$. More explicitly, consider an $n$-dimensional nilpotent indecomposable representation $M$. Let $\lambda = \lambda_1 \sqcup \cdots \sqcup \lambda_r$ be the associated partition, with $T_i$ the rooted subtree corresponding to $\lambda_i$. Since $n \geq 2t$, the representation $M'$ obtained from $M$ by deleting $T_r$ is indecomposable and satisfies $\dim_{\FF_1}(M') \geq t$. In particular, the coefficient quiver $\Gamma_{M'}$ will have at least one vertex colored with the root of $T$. Clearly, $M$ can be reconstructed from the ordered pair $(M',T_r)$ by inserting an arrow whose source is in $M'$ and whose target is in $T_r$. Under the assumption $n \geq 2t$, there are $s_{a_r}\NI_Q(n-a_r)$ choices for such an ordered pair.\footnote{If $n< 2t$, then deletion will generally produce fewer than $s_{a_r}\NI_Q(n-a_r)$ ordered pairs $(M',T_r)$. The issue is that $M'$ always contains a root-colored vertex, whereas there are generally indecomposable representations without root-colored vertices in dimension $<t$.}

From the recursion formula \eqref{eq: recursion}, we obtain the following polynomial to compute $\NI_Q(n)$:
\begin{equation} \label{eq: recursion polynomial}
	p_T(x):= x^{t}-s_1x^{t-1}- s_{2}x^{t-2} - \cdots -s_t=0. 
\end{equation} 
Let $c_1,\ldots , c_r$ be the distinct roots of $p_T(x)$. Then the theory of linear recurrences implies that we may write 
\[ 
\NI_Q(n) = \sum_{i=1}^r{f_i(n)c_i^n}
\] 
for suitable polynomials $f_i(n)$ and $n \geq 2t$. Let $c$ be a root of \eqref{eq: recursion polynomial} with maximal absolute value. Then $\NI_Q(n)=O(n^t|c|^n)$, and in particular $\NI_Q(n) = O(|c|^{2n})$ if $|c|>1$. If $T$ has more than one vertex, then this condition holds. Indeed, if $t>1$ then $\operatorname{deg}(p_T)>1$ and $s_t = 1 = s_1$. In particular,
\[
p_T(1) = 1 - \sum_{i=1}^t{s_i} \le -1,
\]
and $p_T(x)\rightarrow +\infty$ as $x\rightarrow +\infty$. Hence, $p_T$ has a root $c$ with absolute value strictly greater than $1$. In particular, $\NI_Q(n) = O(a^n)$ for some $a \in (1, \infty )$.


Now, we have the following. We will use the following notation:
\[
Q_{\bullet \rightarrow \bullet}=\begin{tikzcd}
	\bullet \\
	\bullet \arrow[u]
	\arrow[out=240,in=300,loop]
\end{tikzcd}, \quad Q_{\bullet \leftarrow \bullet}=\begin{tikzcd}
\bullet \arrow[d]\\
\bullet 
\arrow[out=240,in=300,loop]
\end{tikzcd}
\]

\begin{pro}\label{proposition: speicla case}
Let $T$ and $T'$ be rooted trees, and suppose that $T$ has more than one vertex. Then $Q_T \approx_{nil} Q_{T'}$ if and only if $T'$ has more than one vertex.
\end{pro}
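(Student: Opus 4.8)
The plan is to reduce the statement to a comparison of growth rates, the core being the estimate $\NI_{Q_T}(n) \approx_{\nil} 2^n$ whenever the rooted tree $T$ has more than one vertex (and, by the same argument, $\NI_{Q_{T'}}(n) \approx_{\nil} 2^n$ when $T'$ has more than one vertex). Granting this, the proposition follows immediately. If both $T$ and $T'$ have more than one vertex, then $\NI_{Q_T} \approx_{\nil} 2^n \approx_{\nil} \NI_{Q_{T'}}$, so $Q_T \approx_{\nil} Q_{T'}$. If instead $T'$ has a single vertex, then $Q_{T'}$ is just the one-loop quiver $\wild_1$, so $\NI_{Q_{T'}}(n) = 1$ for every $n$; since $\NI_{Q_T}(n) \approx_{\nil} 2^n$ is unbounded, $Q_T \le_{\nil} Q_{T'}$ would force $\NI_{Q_T}(n) = O(1)$, which is absurd, so $Q_T \not\approx_{\nil} Q_{T'}$.

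It remains to establish $\NI_{Q_T}(n) \approx_{\nil} 2^n$ for $T$ with more than one vertex. The upper bound $\NI_{Q_T} \le_{\nil} 2^n$ is essentially already proved: the discussion following the recursion \eqref{eq: recursion} shows that $\NI_{Q_T}(n) = O(a^n)$ for some real $a \in (1,\infty)$, and $a^n \approx_{\nil} 2^n$ by Remark \ref{r.functions}.

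For the matching lower bound I would use \eqref{eq: recursion} directly, relying only on the facts that $s_1 = s_t = 1$ (corresponding to the one-vertex rooted subtree and to $T$ itself, which has $t$ vertices) and that all $s_k \ge 0$. For $n \ge 2t$ these yield $\NI_{Q_T}(n) \ge \NI_{Q_T}(n-1) + \NI_{Q_T}(n-t)$ as well as $\NI_{Q_T}(n) \ge \NI_{Q_T}(n-1)$; the second inequality shows $\NI_{Q_T}$ is non-decreasing for $n \ge 2t$, and inserting this into the first gives $\NI_{Q_T}(n) \ge 2\,\NI_{Q_T}(n-t)$ for $n \ge 3t$. Iterating this down into the window $[2t,3t)$, on which $\NI_{Q_T} \ge 1$, produces $\NI_{Q_T}(n) \ge \kappa\,2^{n/t}$ for all $n \gg 0$ and some constant $\kappa > 0$. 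Since $2^{1/t} \in (1,\infty)$, Remark \ref{r.functions} gives $2^n \approx_{\nil} (2^{1/t})^n \le_{\nil} \NI_{Q_T}$, and combined with the upper bound this yields $\NI_{Q_T}(n) \approx_{\nil} 2^n$.

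I do not expect a real obstacle here. The only delicate point is keeping track of the ranges of validity through the iteration — \eqref{eq: recursion} holds only for $n \ge 2t$, and the monotonicity of $\NI_{Q_T}$ only once $n$ is large enough — but this is routine. The conceptual content is merely that the coefficients $s_1 = s_t = 1$ in \eqref{eq: recursion} force genuinely exponential growth (already witnessed by the root $c > 1$ of $p_T$ exhibited in the text) and that, by Remark \ref{r.functions}, all exponential functions with base in $(1,\infty)$ are identified under $\approx_{\nil}$.
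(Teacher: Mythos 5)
Your argument is correct, and it reaches the conclusion by a genuinely different route than the paper. The paper fixes the small quiver $Q = Q_{\bullet \rightarrow \bullet}$ (or $Q_{\bullet \leftarrow \bullet}$) sitting inside $Q_T$, computes its growth explicitly as a Fibonacci recursion via Binet's formula to get an exponential lower bound for $\NI_Q$, uses the subquiver containment to get $Q \le_{\nil} Q_T$, and then gets $Q_T \le_{\nil} Q$ from the upper bound $\NI_{Q_T}(n) = O(a^n)$ together with the rescaling trick $a^n \le d^{\lceil \log_d a \rceil n}$; the equivalence of any two $Q_T$, $Q_{T'}$ then passes through the common representative $Q_{\bullet\rightarrow\bullet}$. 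You instead extract an exponential lower bound directly from the recursion \eqref{eq: recursion} itself: since $s_1 = s_t = 1$ and all $s_k \ge 0$ with $t \ge 2$, you get monotonicity and then $\NI_{Q_T}(n) \ge 2\,\NI_{Q_T}(n-t)$ for $n \ge 3t$, hence $\NI_{Q_T} \ge \kappa\, 2^{n/t}$, and you pin both growth functions to the reference function $2^n$ via Remark \ref{r.functions}, concluding by transitivity of $\le_{\nil}$ on functions. Your route avoids both the subquiver-monotonicity step and the explicit Binet computation, at the price of not exhibiting a concrete quiver representative of the equivalence class (which the paper reuses later, e.g.\ in Corollary \ref{corollary: main cor}); your handling of the necessity direction ($T'$ a single vertex gives $\NI_{Q_{T'}} \equiv 1$, contradicting unboundedness of $\NI_{Q_T}$) is essentially the paper's boundedness argument made explicit. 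The only points left implicit — that $\NI_{Q_T}(m) \ge 1$ on the window $[2t,3t)$ (witnessed by a pure spine) and transitivity of $\le_{\nil}$ for functions — are routine and do not constitute gaps.
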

\begin{proof}
The necessity follows from the fact that $Q_{T'}$ has bounded representation type if and only if $T'$ has one vertex. Since $T$ has more than one vertex, either $Q = Q_{\bullet \rightarrow \bullet}$ or $Q= Q_{\bullet \leftarrow \bullet}$ is a subquiver of $Q_T$. Note that for either choice of $Q$, $\NI_Q(n) = \NI_Q(n-1) + \NI_Q(n-2)$ for $n \geq 4$. Furthermore, $\NI_Q(4) = 5$ and $\NI_Q(5) = 8$, so Binet's Formula implies that
\[ 
\NI_Q(n) = \frac{1}{\sqrt{5}}\left[ \left( \frac{1+\sqrt{5}}{2} \right)^{n+1} - \left(\frac{1-\sqrt{5}}{2} \right)^{n+1} \right]
\] 
for all $n\geq 4$. In particular, $\NI_Q(n)$ is asymptotic to $\frac{1}{\sqrt{5}}\left( \frac{1+\sqrt{5}}{2} \right)^{n+1}$, and so $\NI_Q(n) > d^n$ for large enough $n$, where $d = \sqrt{5}/2$. 
Since $Q$ is a subquiver of $Q_T$, $Q \leq_{\textrm{nil}}Q_T$. We claim that also $Q_T\leq_{\textrm{nil}}Q$. By the argument in the preceding paragraph, there exists an $a \in (1,\infty)$ such that $\NI_{Q_T}(n) = O(a^n)$.             
But note that 
\[ 
a^n =  d^{\log_d(a)n} \le d^{\lceil \log_d(a)\rceil n}
\] 
and hence 
\[ 
\NI_{Q_T}(n) = O(a^n) = O(d^{\lceil \log_d(a) \rceil n}) = O(\NI_Q(K n)),
\] 
where $K := \lceil \log_d(a)\rceil$. This shows $\NI_{Q_T} \leq_{\textrm{nil}} Q$, as claimed. It follows that $Q_T \approx_{\textrm{nil}} Q$, where either  $Q = Q_{\bullet \rightarrow \bullet}$ or $Q= Q_{\bullet \leftarrow \bullet}$. Since clearly  $ Q_{\bullet \rightarrow \bullet} \approx_{\textrm{nil}}Q_{\bullet \leftarrow \bullet}$, the result follows.
\end{proof}

\begin{myeg}\label{example: one arrow and two arrows}
	Consider the following two quivers. 
	\begin{equation}\label{quiverquiver}
		Q:=\begin{tikzcd}
			\bullet \arrow[d]\\
			\bullet 
			\arrow[out=240,in=300,loop]
		\end{tikzcd}, \quad Q':=\begin{tikzcd}
			\bullet \arrow[d] & \\
			\bullet 	
			\arrow[out=240,in=300,loop] & \bullet\arrow[l]
		\end{tikzcd}
	\end{equation}
We have that $Q \approx_{\textrm{nil}} Q'$. In fact, from $Q$, we obtain the following:
	\[
	\NI_Q(n)=\NI_Q(n-1)+\NI_Q(n-2), \quad x^2-x-1=0, \quad c=\frac{1+\sqrt{5}}{2}. 
	\]
	On the other hand, from $Q'$, we obtain:
	\[
	\NI_{Q'}=\NI_{Q'}(n-1)+2\NI_{Q'}(n-2)+\NI_{Q'}(n-3), \quad x^3-x^2-2x-1=0, \quad d>2. 
	\]
Where $c$ and $d$ are the roots with maximum absolute values for $x^2-x-1$ and $x^3-x^2-2x-1$, respectively. In particular, we have $Q \approx_{nil} Q'$. 
\end{myeg}

\subsection{Pseudotrees with equioriented central cycle} \label{subsection: equiori}

In fact, our argument can be also used for any pseudotree. In this subsection, we first consider pseudotrees with equioriented central cycle:
\begin{equation}\label{eq: cycle and trees}
Q_{n,T}=\begin{tikzcd}[row sep =0.6em, column sep =0.5em]
	& T_n \arrow[d,no head, dashed] & T_{n-1} \arrow[d,no head, dashed] &  &  \\
	T_1 \arrow[d,no head, dashed] & \bullet \arrow[dl] & \bullet \arrow[l] & \cdots \arrow[l]& T_{k+1} \arrow[d,no head, dashed] \\
	\bullet \arrow[d] &  &  &  & \bullet \arrow[ul]\\
	\bullet \arrow[dr] &  &  &  & \bullet \arrow[u] \\
	T_2 \arrow[u,no head, dashed] & \bullet \arrow[r] & \cdots  \arrow[r] & \bullet \arrow[ur] & T_{k} \arrow[u,no head, dashed]  \\
	& T_3 \arrow[u,no head, dashed]  & & T_{k-1} \arrow[u,no head, dashed]   \\
\end{tikzcd}   
\end{equation}
where each $T_i$ is a rooted tree and the root is identified with a vertex of the cycle. 

Let $Q=Q_{n,T}$ and $C$ be the central cycle of $Q$. We assume that $C$ is equioriented, so that we may label its vertices clockwise as $v_1$, $v_2$, $\ldots , v_n$ and for each $1 \le i < (n-1)$, there is an arrow $\alpha_i : v_i \rightarrow v_{i+1}$, and an arrow $\alpha_n:v_n \to v_1$. For convenience, we extend the subscripts on the vertices and arrows of $C$ by defining $v_i := v_{i\text{ } \operatorname{mod }n}$ and $\alpha_i := \alpha_{i\text{ } \operatorname{mod} n}$ for all $i \in \mathbb{Z}$. We let $T_i$ denote the rooted tree attached to $v_i$ as in \eqref{eq: cycle and trees}, and let $t_i$ denote the total number of vertices of $T_i$ (including $v_i$ as the root). We also let $s^{(i)}_k$ denote the number of rooted subtrees of $T_i$ with exactly $k$ vertices. The coefficient quiver $\Gamma_M$ of a $d$-dimensional nilpotent indecomposable $\mathbb{F}_1$-representation $M$ of $Q$ is of the following form:\footnote{See \cite[Theorem 5]{szczesny2011representations} and \cite[Theorem 5.12]{jun2020quiver}.}
\begin{equation}\label{eq: treeq}
\Gamma_M=\left(	\begin{tikzcd}[arrow style=tikz,>=stealth,column sep=2em]
		T_i^{(0)} \arrow[d, no head, dashed]	& T_j^{(1)} \arrow[d, no head, dashed] &  & T_k^{(r-3)}  \arrow[d,no head, dashed] & T_p^{(r-2)}   \arrow[d,no head, dashed] &  T_m^{(r-1)}  \arrow[d,no head, dashed]  & T_\ell^{(r)}  \arrow[d,no head, dashed]\\
		v_i \arrow[r]  & v_{i+1}   \arrow[r] & \cdots  \arrow[r] & v_{i+j-3}\arrow[r] & v_{i+j-2} \arrow[r] & v_{i+j-1} \arrow[r]& v_{i+j}
	\end{tikzcd} \right)
\end{equation} 
where $T^{(i)}_d$ is a rooted subtree of a tree $T_i$ with exactly $d$ vertices (the root is again identified with a vertex of the oriented path). The number of vertices appearing in \eqref{eq: treeq} is precisely $d$ since it is associated to a $d$-dimensional representation of $Q_{n,T}$.

We will call the oriented path in \eqref{eq: treeq} the \emph{spine} of $M$, and $j+1$ is called the \emph{spine length}. The vertex $v_i$ is the source (or start) of the spine and $v_{i+j}$ is the target (or finish). We define the following function: 
\begin{mydef}  \label{definition: recursion count}
Let $Q$ be a proper pseudotree, subject to the notational conventions above. For all $1 \le i,f \le n$ and $j, d \in \mathbb{N}$ we define $Q_{i\rightarrow f}(j,d)$ to be the number of isomorphism classes of indecomposable, nilpotent, $d$-dimensional $\mathbb{F}_1$-representations of $Q$ with spine length $j$, starting at $v_i$ and finishing at $v_f$. We also define  
\[
\displaystyle Q_{i\rightarrow f}(d) = \sum_{j\geq 0}{Q_{i\rightarrow f}(j,d)}. 
\]
 Clearly, the following relation holds for $d \geq \max \{t_i\}$: 
\[
\displaystyle \operatorname{NI}_Q(d) = \sum_{1 \le i,f\le n}{Q_{i\rightarrow f}(d)}. 
\] 
\end{mydef}  
We begin with the following fundamental observation:
\begin{pro}  
For $d\geq t_f+t_{f-1}$, we have 
\[ 
\displaystyle Q_{i\rightarrow f}(j,d) = \sum_{k=1}^{t_f}{s^{(f)}_kQ_{i\rightarrow (f-1)}(j-1,d-k)},
\]  
where subscripts referring to vertices of $C$ are taken modulo $n$, as needed. In particular, 
\[ 
\displaystyle Q_{i\rightarrow f}(d) = \sum_{k=1}^{t_f}{s^{(f)}_kQ_{i\rightarrow(f-1)}(d-k)}.
\]
\end{pro}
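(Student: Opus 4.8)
The plan is to argue by deletion of the last tree $T^{(r)}_\ell$ attached at the spine terminus $v_{i+j} = v_f$, mirroring the recursion \eqref{eq: recursion} from the one-loop case. Fix $i, f$ and $j, d$ with $d \geq t_f + t_{f-1}$. Given an indecomposable, nilpotent, $d$-dimensional representation $M$ of $Q$ with spine length $j$, starting at $v_i$ and finishing at $v_f$, consider its coefficient quiver $\Gamma_M$ as in \eqref{eq: treeq}. The terminal spine vertex carries a rooted subtree $T^{(r)}_\ell = T^{(f)}_k$ for some $k$ with $1 \leq k \leq t_f$ (here $T^{(f)}$ is the tree attached at $v_f$, and the root is identified with $v_f$). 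Deleting this subtree $T^{(f)}_k$ from $\Gamma_M$ — that is, removing the $k$ vertices of $T^{(f)}_k$ together with all incident arrows, including the spine arrow $v_{f-1} \to v_f$ — yields the coefficient quiver $\Gamma_{M'}$ of a representation $M'$ of $Q$ with spine length $j-1$, starting at $v_i$, finishing at $v_{f-1}$, and dimension $d - k$.

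The key point, exactly as in the paragraph following \eqref{eq: recursion}, is that under the hypothesis $d \geq t_f + t_{f-1}$ this deletion operation is a bijection onto ordered pairs. First, one must check $M'$ is still indecomposable: this follows because $d - k \geq t_f + t_{f-1} - t_f = t_{f-1} \geq 1$, so $\Gamma_{M'}$ still has a nonempty spine (in fact at least the vertex $v_{f-1}$ is spine-colored, and $M'$ is connected by Lemma \ref{lemma: lemma from p1} applied to the short exact sequence realizing the deletion), hence indecomposable by the classification of coefficient quivers of indecomposables \eqref{eq: treeq}. Conversely, from any ordered pair $(M', T^{(f)}_k)$ where $M'$ is indecomposable, nilpotent, $(d-k)$-dimensional with spine length $j-1$, starting at $v_i$ and finishing at $v_{f-1}$, and $T^{(f)}_k$ a rooted subtree of $T^{(f)}$ with $k$ vertices, we reconstruct $M$ uniquely: append $T^{(f)}_k$ at a new spine vertex $v_f$ and insert the arrow $v_{f-1} \to v_f$ colored by $\alpha_{f-1}$. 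Because $M'$ is guaranteed to contain a $v_{f-1}$-colored spine vertex (this is where $d \geq t_f + t_{f-1}$ is used), this reconstruction is well-defined and inverse to deletion. Summing over the $s^{(f)}_k$ choices of $T^{(f)}_k$ for each $k$ gives
\[
Q_{i\rightarrow f}(j,d) = \sum_{k=1}^{t_f}{s^{(f)}_k\, Q_{i\rightarrow (f-1)}(j-1,d-k)}.
\]
Summing this identity over all $j \geq 0$ and using $Q_{i\rightarrow f}(d) = \sum_{j\geq 0}{Q_{i\rightarrow f}(j,d)}$ (noting the terms with $j = 0$ vanish on the left since $d \geq t_f + t_{f-1} > 0$ forces a nonempty spine, and on the right the index shift $j \mapsto j-1$ is absorbed) yields the stated formula for $Q_{i\rightarrow f}(d)$.

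The main obstacle is the careful bookkeeping needed to confirm that deletion and reconstruction are mutually inverse \emph{without overcounting} — specifically, verifying that when $d \geq t_f + t_{f-1}$ the intermediate representation $M'$ always retains a spine vertex colored $v_{f-1}$, so that the arrow to be reinserted has a canonical source. This is the analogue of the subtlety flagged in the footnote to \eqref{eq: recursion} about root-colored vertices in small dimensions, and it is precisely why the bound $d \geq t_f + t_{f-1}$ (rather than merely $d \geq t_f$) appears in the hypothesis: $t_{f-1}$ is the minimal dimension forcing $v_{f-1}$ into the spine. The rest is a routine translation via Lemma \ref{lemma: lemma from p1}(3), which tells us that $\Gamma_M$ decomposes as $\Gamma_{M'}$ (predecessor-closed) glued to $\Gamma_{T^{(f)}_k}$ (successor-closed) along a single $\alpha_{f-1}$-colored arrow.
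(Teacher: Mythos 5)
Your proposal is correct and follows essentially the same route as the paper: delete the last rooted subtree attached at the spine terminus $v_f$ to obtain a bijection between such representations $M$ and ordered pairs $(M',T^{(f)}_k)$, with the hypothesis $d\geq t_f+t_{f-1}$ guaranteeing the truncated representation still has a spine ending at $v_{f-1}$ (equivalently, spine length at least two), and then sum over $j$. Your extra bookkeeping on indecomposability of $M'$ and on the role of the dimension bound just makes explicit what the paper's proof states more briefly.
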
 
\begin{proof} 
Let $M$ be a $d$-dimensional, indecomposable, nilpotent $\mathbb{F}_1$-representation of $Q$ with spine length $j$ starting at $v_i$ and ending at $v_f$. Since $d \geq t_i+t_{i+1}$, we must have $j \geq 2$. Then deleting the last rooted tree in $\Gamma_M$ establishes a bijective correspondence between such $M$ and ordered pairs $(N,T)$, where $N$ is a nilpotent indecomposable with spine length $j-1$ and dimension $d-k$ and $T$ is a rooted subtree of $T_f$ with $k$ vertices (for all $1 \le k \le t_f$). There are $Q_{i\rightarrow (f-1)}(j-1,d-k)$ choices for an indecomposable with spine length $j-1$ and dimension $d-k$, and each can be completed to an indecomposable with spine length $j$ and dimension $d-k$ in exactly $s^{(f)}_k$ ways. The first relation easily follows, and the second follows from the first by summing both sides over all $j\geq 0$. 
\end{proof} 
Fix $i$. Assume that for all $1 \le f \le n$ and $d' < d$, the numbers $Q_{i\rightarrow f}(d')$ are known. Then the above relation implies that we may write $Q_{i\rightarrow f}(d)$ as a non-negative linear combination of the prior values of $Q_{i\rightarrow (f-1)}$;  $Q_{i\rightarrow (f-1)}$ as a non-negative linear combination of prior values of $Q_{i\rightarrow (f-2)}$; etc. In particular, we may write $Q_{i\rightarrow f}(d)$ as a non-negative linear combination of the prior values of $Q_{i\rightarrow (f-n)} = Q_{i\rightarrow f}$. In other words, we have the following: 
\begin{cor} 
For $d \geq t_f+t_{f-1}$, $Q_{i\rightarrow f}(d)$ satisfies a linear recursion with non-negative constant coefficients: 
\[ 
\displaystyle Q_{i\rightarrow f}(d) = \sum_{k=1}^{d_{i\rightarrow f}}{\lambda^{(f)}_kQ_{i\rightarrow f}(d-k)}
\]   
where $\lambda^{(f)}_k \in \mathbb{N}$ and $d_{i\rightarrow f} \in \mathbb{Z}_{>0}$.
\end{cor}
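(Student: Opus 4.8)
The plan is to chain the recursion of the preceding Proposition $n$ times, once around the central cycle $C$, and then collect terms. Fix $i$ once and for all and abbreviate $q_f(d) := Q_{i\rightarrow f}(d)$, so that the Proposition reads $q_f(d) = \sum_{k=1}^{t_f} s^{(f)}_k\, q_{f-1}(d-k)$ for $d$ large, with subscripts taken modulo $n$. Applying this relation with target vertex $f$, then with target $f-1$ to each term produced, then with target $f-2$, and so on, one obtains after $l$ steps the identity $q_f(d) = \sum_{k_1,\dots,k_l} \big(\prod_{j=1}^{l} s^{(f-j+1)}_{k_j}\big)\, q_{f-l}\big(d-(k_1+\cdots+k_l)\big)$, the sum over $1 \le k_j \le t_{f-j+1}$. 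Taking $l = n$ and using $f - n \equiv f \pmod n$ (so $q_{f-n} = q_f$) gives
\[
q_f(d) \;=\; \sum_{\substack{1 \le k_j \le t_{f-j+1}\\ j=1,\dots,n}} \Big(\prod_{j=1}^{n} s^{(f-j+1)}_{k_j}\Big)\; q_f\big(d-(k_1+\cdots+k_n)\big).
\]
Grouping the right-hand side by the value $m := k_1 + \cdots + k_n$ and setting
\[
\lambda^{(f)}_m \;:=\; \sum_{\substack{k_1+\cdots+k_n = m\\ 1 \le k_j \le t_{f-j+1}}}\;\prod_{j=1}^{n} s^{(f-j+1)}_{k_j}, \qquad d_{i\rightarrow f} := \sum_{j=1}^{n} t_{f-j+1} = \sum_{l=1}^{n} t_l,
\]
yields exactly the asserted recursion $q_f(d) = \sum_{m=1}^{d_{i\rightarrow f}} \lambda^{(f)}_m\, q_f(d-m)$.

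It then remains to verify three bookkeeping points. First, each $\lambda^{(f)}_m$ is a finite sum of products of the nonnegative integers $s^{(\cdot)}_k$, hence $\lambda^{(f)}_m \in \mathbb{N}$. Second, $d_{i\rightarrow f}$ is a genuine positive integer with the recursion actually of that degree: since $s^{(f)}_1 = s^{(f)}_{t_f} = 1$ always (the one-vertex rooted subtree and the whole tree), the tuples $(1,\dots,1)$ and $(t_f, t_{f-1}, \dots)$ contribute, so $\lambda^{(f)}_n = \lambda^{(f)}_{\sum_l t_l} = 1$; moreover every $k_j \ge 1$ forces $m \ge n \ge 1$, so the right-hand side involves only strictly smaller arguments $q_f(d-m)$ and never $q_f(d)$ itself. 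Third, one must pin down the range of $d$ for which every intermediate application of the Proposition is legitimate: the step with target $f-l+1$ rewrites terms $q_{f-l+1}(d')$ and requires $d' \ge t_{f-l+1} + t_{f-l}$, while at that stage $d' \ge d - (t_f + t_{f-1} + \cdots + t_{f-l+2})$; propagating these inequalities, the entire computation is valid once $d$ exceeds a fixed constant depending only on $Q$, e.g. any $d \ge \sum_{l=1}^{n} t_l + \max_{1\le l\le n} t_l$.

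This last point is the only genuine obstacle: the threshold $t_f + t_{f-1}$ literally written in the statement is weaker than what the $n$-fold iteration supplies, so the honest reading is that the recursion holds for $d$ above an explicit absolute constant (which may be absorbed into $d_{i\rightarrow f}$ or recorded separately). Since all these quantities are fixed invariants of the finite quiver $Q$, the discrepancy is immaterial for every asymptotic use of the corollary. A cleaner packaging that sidesteps the manual range-tracking is to pass to generating functions: with $F_f(x) = \sum_d q_f(d)x^d$, the Proposition gives $F_f(x) = \big(\sum_{k=1}^{t_f} s^{(f)}_k x^k\big) F_{f-1}(x) + p_f(x)$ for a polynomial $p_f$, and iterating $n$ times yields $\big(1 - \prod_{j=1}^{n} \sum_{k=1}^{t_{f-j+1}} s^{(f-j+1)}_k x^k\big) F_f(x) \in \mathbb{C}[x]$; the bracketed polynomial has constant term $1$ and nonconstant part with nonnegative coefficients supported in degrees $\ge n$, so clearing denominators reproduces the recursion with nonnegative integer coefficients and makes the valid range of $d$ transparent.
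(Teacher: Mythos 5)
Your argument is essentially the paper's own: the authors likewise iterate the one-step proposition around the central cycle $n$ times, using $Q_{i\to (f-n)} = Q_{i\to f}$, to express $Q_{i\to f}(d)$ as a non-negative integer combination of earlier values of $Q_{i\to f}$ (and they record $\lambda^{(f)}_{d_{i\to f}}=1$ immediately afterward). Your remark that the literal threshold $d \ge t_f + t_{f-1}$ should be replaced by a larger constant depending only on $Q$ to legitimize every intermediate application is a fair, minor correction that does not affect how the corollary is used.
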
  
It is easy to show that $\lambda^{(f)}_{d_{i\rightarrow f}} = 1$. Hence, the characteristic polynomial $p_{i\rightarrow f}(d)$ associated to the above recursion satisfies 
\begin{enumerate} 
\item $p_{i\rightarrow f}(0) = -1.$
\item $\displaystyle\lim_{x\rightarrow \infty}{p_{i\rightarrow f}(x)} = + \infty$.
\end{enumerate}  
It follows that $p_{i\rightarrow f}(d)$ has a root with positive absolute value. In turn, this implies the following: 
\begin{cor} \label{corollary: main cor}
With the above notational conventions, 
\[ 
\displaystyle Q_{i\rightarrow f}(d) = O(r_{i\rightarrow f}^d)
\]  
\[ 
\displaystyle \operatorname{NI}_Q(d) = O(r^d) 
\] 
for suitable $r_{i\rightarrow f}, r \in (1,\infty)$.\footnote{Refer again to the argument above Proposition \ref{proposition: speicla case} and Definition \ref{d.bigO}.} In particular, $Q \approx_{\nil} Q_{\bullet \rightarrow \bullet}$, as in \eqref{eq: rooted tree and a loop}. 
\end{cor}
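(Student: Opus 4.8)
The plan is to deduce the corollary from the linear-recursion structure just established, exactly as in the rooted-tree-with-one-loop case. Fixing a source vertex $v_i$ and a target vertex $v_f$ on the central cycle $C$, we have a linear recursion with non-negative integer coefficients governing $Q_{i\rightarrow f}(d)$ for $d$ large; let $p_{i\rightarrow f}(x)$ be its characteristic polynomial. The two stated properties $p_{i\rightarrow f}(0) = -1$ and $p_{i\rightarrow f}(x) \to +\infty$ as $x \to +\infty$ give, by the intermediate value theorem, a positive real root $r_{i\rightarrow f}$; taking $r_{i\rightarrow f}$ to be the largest-absolute-value root (which is therefore $> 1$), the theory of linear recurrences writes $Q_{i\rightarrow f}(d) = \sum_j g_j(d) \rho_j^d$ over the distinct roots $\rho_j$ with polynomial coefficients $g_j$, so $Q_{i\rightarrow f}(d) = O(d^{N} r_{i\rightarrow f}^d) = O(\tilde r_{i\rightarrow f}^{\,d})$ for any $\tilde r_{i\rightarrow f} \in (r_{i\rightarrow f}, \infty)$, in particular for some value still in $(1,\infty)$. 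Summing over the finitely many pairs $(i,f)$ via $\operatorname{NI}_Q(d) = \sum_{1\le i,f\le n} Q_{i\rightarrow f}(d)$ (valid for $d \geq \max\{t_i\}$) and taking $r := \max_{i,f} \tilde r_{i\rightarrow f}$ yields $\operatorname{NI}_Q(d) = O(r^d)$ with $r \in (1,\infty)$.

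For the equivalence $Q \approx_{\nil} Q_{\bullet\rightarrow\bullet}$, I would argue exactly as in Proposition \ref{proposition: speicla case}. Since $Q$ is a proper pseudotree, its central cycle has length $\geq 1$ and there is at least one tree $T_i$ with more than one vertex hanging off $C$ (otherwise $Q$ would be a type $\tilde{\mathbb{A}}$ quiver), so one of $Q_{\bullet\rightarrow\bullet}$ or $Q_{\bullet\leftarrow\bullet}$ embeds as a subquiver of $Q$; actually even more simply, $Q_{\bullet\rightarrow\bullet}$ (a loop with a pendant edge) sits inside any proper pseudotree up to the orientation of that pendant edge, and $Q_{\bullet\rightarrow\bullet} \approx_{\nil} Q_{\bullet\leftarrow\bullet}$ was already observed. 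Being a subquiver gives $Q_{\bullet\rightarrow\bullet} \leq_{\nil} Q$; it remains to show $Q \leq_{\nil} Q_{\bullet\rightarrow\bullet}$. From the computation in Proposition \ref{proposition: speicla case}, $\operatorname{NI}_{Q_{\bullet\rightarrow\bullet}}(n) > d^n$ for large $n$ with $d = \sqrt{5}/2 > 1$. Combining with $\operatorname{NI}_Q(n) = O(r^n)$ from the first part and the inequality $r^n = d^{\log_d(r)n} \leq d^{\lceil \log_d(r)\rceil n}$, we get $\operatorname{NI}_Q(n) = O(\operatorname{NI}_{Q_{\bullet\rightarrow\bullet}}(Kn))$ with $K = \lceil \log_d(r)\rceil$, hence $Q \leq_{\nil} Q_{\bullet\rightarrow\bullet}$. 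Therefore $Q \approx_{\nil} Q_{\bullet\rightarrow\bullet}$.

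I expect the only genuine subtlety to be verifying cleanly that $p_{i\rightarrow f}(0) = -1$ and the leading behavior — i.e. that the recursion really does have the claimed shape with $\lambda^{(f)}_{d_{i\rightarrow f}} = 1$ — since this is where the combinatorics of composing the $n$ one-step relations $Q_{i\rightarrow f}(d) = \sum_k s^{(f)}_k Q_{i\rightarrow(f-1)}(d-k)$ around the full cycle must be tracked; the product of the $n$ "top" coefficients $s^{(f)}_{t_f} = 1$ contributes the monomial of lowest degree, forcing the constant term $-1$. Everything after that (extracting a root of absolute value $> 1$, the big-$O$ bound, summing over pairs, and the $\log_d$ trick for the reverse comparison) is routine and parallels the argument already given above Proposition \ref{proposition: speicla case}. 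One small point worth stating explicitly in the write-up: the finitely many thresholds $d \geq t_f + t_{f-1}$ and $d \geq \max\{t_i\}$ only affect finitely many initial values, which is irrelevant for the asymptotic relation $\leq_{\nil}$.
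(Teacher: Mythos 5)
Your first part (the exponential upper bounds on $Q_{i\rightarrow f}(d)$ and $\NI_Q(d)$, and the deduction $Q \le_{\nil} Q_{\bullet\rightarrow\bullet}$ via the $\log_d$ rescaling against the Fibonacci-type lower bound for $\NI_{Q_{\bullet\rightarrow\bullet}}$) is fine and matches what the paper does. The genuine gap is in the reverse direction $Q_{\bullet\rightarrow\bullet} \le_{\nil} Q$: you assert that $Q_{\bullet\rightarrow\bullet}$ (or $Q_{\bullet\leftarrow\bullet}$) ``sits inside any proper pseudotree'' as a subquiver, but this is false whenever the central cycle of $Q$ has length at least $2$, because $Q_{\bullet\rightarrow\bullet}$ contains a loop and such a $Q$ contains no loop at all. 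A proper pseudotree only guarantees a subquiver $S$ consisting of the central cycle (of some length $n$, possibly large) together with one pendant edge, and the comparison of $S$ with $Q_{\bullet\rightarrow\bullet}$ is exactly the nontrivial content here; ``up to orientation of the pendant edge'' does not repair the absence of a loop.

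The paper closes this gap with an explicit construction you do not supply: starting from a $d$-dimensional nilpotent indecomposable $M$ over $Q_{\bullet\rightarrow\bullet}$, it stretches each spine arrow of $\Gamma_M$ into an oriented path winding once around the $n$-cycle, reorients the off-spine arrows as needed, and pads the end of the spine, producing an indecomposable $S$-representation $\tilde M$ of dimension $nd$; injectivity of $M \mapsto \tilde M$ gives $\NI_{Q_{\bullet\rightarrow\bullet}}(d) \le \NI_S(nd)$, hence $Q_{\bullet\rightarrow\bullet} \le_{\nil} S \le_{\nil} Q$. Note that the dimension rescaling by $n$ allowed in the definition of $\le_{\nil}$ is essential to this step. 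Alternatively, you could patch your argument by proving directly an exponential lower bound $\NI_Q(d) \ge c\,a^d$ with $a>1$ (e.g.\ via a Perron-type analysis of the recursion, or by exhibiting exponentially many coefficient quivers over $S$), and then applying the same rescaling trick in the other direction; but as written, your proposal establishes only one of the two inequalities needed for $Q \approx_{\nil} Q_{\bullet\rightarrow\bullet}$.
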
 

\begin{proof} 
Set $Q' = Q_{\bullet \rightarrow \bullet}$. Since  $\NI_Q(d) = O(r^d)$, it immediately follows that $Q \le_{\textrm{nil}} Q'$. Conversely, suppose without loss of generality that $T_1$ has at least two vertices. Then $Q$ has a subquiver of the form  
\begin{equation}
S:= \begin{tikzcd}[row sep =0.6em, column sep =0.5em]
	& A_1 \arrow[d,no head, dashed] &  &  &  \\
	 & \bullet \arrow[dl] & \bullet \arrow[l] & \cdots \arrow[l]&  \\
	\bullet \arrow[d] &  &  &  & \bullet \arrow[ul]\\
	\bullet \arrow[dr] &  &  &  & \bullet \arrow[u] \\
	 & \bullet \arrow[r] & \cdots  \arrow[r] & \bullet \arrow[ur] &   \\
\end{tikzcd}    
\end{equation} 
where $A_1 = \bullet \xrightarrow[]{} \bullet$ or $\bullet \xleftarrow[]{} \bullet$ (considered as rooted trees). If the central cycle of $S$ has $n$ vertices, then $\NI_{Q'}(d) \le \NI_S(nd)$ for all $d$. Indeed, let $M$ be a $d$-dimensional, nilpotent indecomposable $Q'$-representation $M$ with spine 
\[
s_1\xrightarrow[]{\alpha_1}\cdots s_{j-1}\xrightarrow[]{\alpha_{j-1}}s_j.
\]
In particular, $M$ has spine length $j$ and $j \le d \le 2j$. Define $\tilde{M}$ to be the $Q$-representation whose coefficient quiver is obtained from the following process: 
\begin{enumerate} 
\item Replace each arrow in the spine of $\Gamma_M$ with a oriented path containing $n$ vertices, so that the spine becomes 
\[
s_{1,1}\xrightarrow[]{\alpha_{1,1}}\cdots s_{1,n-1}\xrightarrow[]{\alpha_{1,n-1}}s_{1,n}\xrightarrow[]{\alpha_{2,1}}s_{2,1}\xrightarrow[]{\alpha_{2,1}}\cdots s_{j,n-1}\xrightarrow[]{\alpha_{j,n-1}}s_{j,n}.
\] The result is a $Q'$-representation with dimension $nj + d-j$. Since $j \le d$,  
\[
nj+d-j = j(n-1)+d \le d(n-1)+d = nd. 
\]
\item After possibly reversing the orientation of the arrows off the spine, this $Q'$-representation can naturally be considered an $S$-representation.
\item Add arrows to the end of the spine to obtain the coefficient quiver $\Gamma_{\tilde{M}}$, so that the spine of the $S$-representation becomes 
\[
s_{1,1}\xrightarrow[]{\alpha_{1,1}}\cdots s_{1,n-1}\xrightarrow[]{\alpha_{1,n-1}}s_{1,n}\xrightarrow[]{\alpha_{2,1}}s_{2,1}\xrightarrow[]{\alpha_{2,1}}\cdots s_{j,n-1}\xrightarrow[]{\alpha_{j,n-1}}s_{j,n} \xrightarrow[]{}s_{jn+1} \xrightarrow[]{}\cdots \xrightarrow[]{} s_{(n-1)(d-j)}.
\] 
The result is an indecomposable $S$-representation of dimension $dn$. 
\end{enumerate} 
It is easy to verify that the map $M \mapsto \tilde{M}$ is injective, and so $\NI_{Q'}(d) \le \NI_S(nd)$. In particular, we have $Q' \le_{\textrm{nil}} S \le_{\textrm{nil}} Q$. It follows that $Q' \approx_{\textrm{nil}}Q$, as we wished to show.
\end{proof}

\begin{myeg}
Consider the following quivers. 
\begin{equation}\label{eq: cycle and arrow}
	Q_1:=\begin{tikzcd}
		\bullet \arrow[d]\\
		\bullet 
		\arrow[out=240,in=300,loop,swap]
	\end{tikzcd} \qquad Q_2= \begin{tikzcd}
	\bullet \arrow[r]	& \bullet \arrow[r,out=300,in=250] & \bullet \arrow[l,out=120,in=60]  & \bullet \arrow[l]
\end{tikzcd} 
\end{equation}
We know that $NI_{Q_1}(d)=O((\frac{1+\sqrt{5}}{2})^d)$ from Example \ref{example: one arrow and two arrows}. On the other hand, for $Q_2$, we obtain the following recursive formula:
\[
g(d)= b_2g(d-2) + b_3g(d-3)+b_4g(d-4) =g(d-2)+2g(d-3)+g(d-4),
\]
where $g(d)=Q_{i \to f}(d)$ as in Definition \ref{definition: recursion count}. So, we have $p(x)=x^4-x^2-2x-1$, or $x^2=x+1$ and $c=\frac{1+\sqrt{5}}{2}$. Hence, we have $Q_1 \approx_{nil} Q_2$.
\end{myeg}

\begin{myeg}
\[
Q_2'= \begin{tikzcd}
	\bullet \arrow[r]	& \bullet \arrow[r,out=300,in=250] & \bullet \arrow[l,out=120,in=60] 
\end{tikzcd} 
\]	
For $Q_2'$, we obtain the following recursive formula:
\[
g(d)= g(d-2)+ g(d-3) 
\]
Hence, we have $x^3-x-1=0$ and $c \approx 1.32$, showing that $Q_2' \approx_{nil} Q_2 \approx_{\nil} Q_1$ from the previous example.
\end{myeg}

\subsection{Pseudotrees with acyclic central cycle}  \label{subsection: reversing arrows}
Let $Q$ be a proper pseudotree and $n$ a natural number. Let $\mathcal{N}_Q(n)$ be a set containing exactly one representative of each isomorphism class of $n$-dimensional indecomposable object in $\operatorname{Rep}(Q,\mathbb{F}_1)_{\nil}$. Then we can decompose $\mathcal{N}_Q(n)$ as follows: 
\[  
\mathcal{N}_Q(n) = \mathcal{T}_Q(n)\sqcup \mathcal{P}_Q(n),
\] 
where
\[
\mathcal{T}_Q(n) = \{ M \in \mathcal{N}_Q(n)\mid H_1(\Gamma_M,\mathbb{Z}) = 0 \} \textrm{ and } \mathcal{P}_Q(n) = \{ M \in \mathcal{N}_Q(n) \mid H_1(\Gamma_M,\mathbb{Z}) = \mathbb{Z}\}.
\]
These sets consist of the $n$-dimensional tree representations and proper pseudotree representations of $Q$. Note that $Q$ admits proper pseudotree representations if and only if its central cycle is acyclic (see \cite{jun2021coefficient}).
For an arrow $\alpha \in Q_1$, let $r_{\alpha}Q$ denote the quiver which is obtained from $Q$ by reversing the orientation of $\alpha$. Given a representation $M$ with winding map $c_M : \Gamma_M \rightarrow Q$, we obtain a winding $r_{\alpha}(c_M) : r_{\alpha}(\Gamma_M) \rightarrow r_{\alpha}Q$ by reversing the orientation of each $\alpha$-colored arrow in $\Gamma_M$. Denote the corresponding representation of $r_{\alpha}Q$ by $r_{\alpha}M$ (i.e. $\Gamma_{r_{\alpha}M} = r_{\alpha}(\Gamma_M)$ and $c_{r_{\alpha}M} = r_{\alpha}(c_M)$). 
\begin{rmk} 
By a slight abuse of notation, we denote the arrow corresponding to $\alpha$ in $r_{\alpha}Q$ by $\alpha$. 
\end{rmk} 
\noindent The following proposition is straightforward. 
\begin{pro}\label{p.reflection}
Let $Q$ be a quiver and $M$ be an $\mathbb{F}_1$-representation of $Q$. Then the following hold: 
\begin{enumerate} 
\item $r_{\alpha}r_{\alpha}M \cong M$.
\item $\underline{\operatorname{dim}}(r_{\alpha}M) = \underline{\operatorname{dim}}(M)$. 
\item $r_{\alpha}M$ is indecomposable if and only if $M$ is indecomposable.  
\item $H_1(\Gamma_{r_{\alpha}M},\mathbb{Z}) \cong H_1(\Gamma_M,\mathbb{Z})$.
\end{enumerate}
\end{pro}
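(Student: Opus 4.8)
The plan is to reduce all four items to a single structural observation about coefficient quivers. By construction $\Gamma_{r_\alpha M}$ and $\Gamma_M$ have the same vertex set, carry the same vertex‑coloring (the map on vertices of the winding is not altered), and have the same set of arrows with the same colors; the sole difference is that every $\alpha$‑colored arrow has had its source and target interchanged. In particular the underlying \emph{undirected} graphs of $\Gamma_{r_\alpha M}$ and $\Gamma_M$ literally coincide, and applying $r_\alpha$ twice restores every arrow to its original orientation. For item (1), this last remark gives $\Gamma_{r_\alpha r_\alpha M}=\Gamma_M$ and $c_{r_\alpha r_\alpha M}=c_M$ on the nose, while also $r_\alpha r_\alpha Q = Q$; hence the identity quiver map is a coefficient isomorphism $c_{r_\alpha r_\alpha M}\xrightarrow{\sim} c_M$, and passing through the equivalence between $\mathcal{C}_Q$ and $\Rep(Q,\FF_1)$ recalled above yields $r_\alpha r_\alpha M\cong M$.

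For item (2), I would recall that the representation attached to a winding $c:\Gamma\to Q$ has $\dim_{\FF_1}V_i$ equal to the number of vertices of $\Gamma$ sent to $i$; since $r_\alpha$ changes neither $\Gamma_0$ nor the vertex map, these counts are unchanged and $\underline{\dim}(r_\alpha M)=\underline{\dim}(M)$. For item (3), I would invoke the fact that an $\FF_1$‑representation is indecomposable if and only if its coefficient quiver is connected — this follows from the direct‑sum case of Lemma~\ref{lemma: lemma from p1}(3), namely $\Gamma_{X\oplus Y}=\Gamma_X\sqcup\Gamma_Y$ with each summand simultaneously predecessor‑ and successor‑closed, so no connecting arrows are added. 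Connectedness depends only on the underlying undirected graph, which $r_\alpha$ leaves unchanged, so $\Gamma_{r_\alpha M}$ is connected precisely when $\Gamma_M$ is; this gives (3). For item (4), $H_1(-,\mathbb{Z})$ of a quiver is by definition the first integral homology of its underlying $1$‑complex, so equality of the underlying graphs gives not merely an isomorphism $H_1(\Gamma_{r_\alpha M},\mathbb{Z})\cong H_1(\Gamma_M,\mathbb{Z})$ but in fact an equality.

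I do not anticipate any genuine obstacle: the proposition is essentially bookkeeping around the observation above, and each part is a one‑line consequence of it. The only point deserving a line of care is that $r_\alpha(c_M)$ is still a \emph{winding}, so that $r_\alpha M$ is a well‑defined $\FF_1$‑representation (unique up to isomorphism); this holds because the winding condition on a pair of same‑colored arrows, ``$s(\beta)\neq s(\gamma)$ and $t(\beta)\neq t(\gamma)$'', is symmetric under interchanging sources with targets, and arrows of colors other than $\alpha$ are untouched — and in any case this is already asserted in the discussion preceding the statement.
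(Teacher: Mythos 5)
Your proof is correct and matches the intended argument: the paper records no proof at all (the proposition is labeled ``straightforward''), the point being exactly your observation that $r_{\alpha}$ only reverses the $\alpha$-colored arrows, leaving the vertex set, the vertex fibers over $Q_0$, and the underlying undirected graph of $\Gamma_M$ untouched, and preserving the winding condition. The one step worth making explicit is the converse half of your criterion in (3) --- that a disconnected coefficient quiver forces a nontrivial decomposition, since each component is simultaneously successor- and predecessor-closed and restricting the winding to the components yields the summands --- but this is equally immediate and is the standard fact used throughout \cite{jun2020quiver, szczesny2011representations}.
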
  
Now let $Q$ be a proper pseudotree. If $M \in \mathcal{T}_Q(n)$, then Proposition \ref{p.reflection}(2)-(4) implies that $r_{\alpha}M$ is isomorphic to exactly one element of $\mathcal{T}_{r_{\alpha}Q}(n)$. Therefore, we have a map $r_{\alpha,n} : \mathcal{T}_{Q}(n) \rightarrow \mathcal{T}_{r_{\alpha}Q}(n)$ for each $n$. By Proposition \ref{p.reflection}(1), $r_{\alpha,n}$ is bijective. We have the immediate corollary: 
\begin{cor} 
Let $Q$ be a proper pseudotree. Then for each natural number $n$, the number $|\mathcal{T}_Q(n)|$ does not depend on the orientation of $Q$. 
\end{cor}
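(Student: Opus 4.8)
The plan is to deduce the corollary directly from the reflection map already constructed in the preceding paragraph. Recall that for a proper pseudotree $Q$ and an arrow $\alpha \in Q_1$, we built a bijection $r_{\alpha,n} : \mathcal{T}_Q(n) \rightarrow \mathcal{T}_{r_\alpha Q}(n)$; its inverse is $r_{\alpha,n}$ applied to $r_\alpha Q$, by Proposition \ref{p.reflection}(1). Consequently $|\mathcal{T}_Q(n)| = |\mathcal{T}_{r_\alpha Q}(n)|$ for every $n$ and every single arrow $\alpha$.

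The only remaining point is to pass from \emph{reversing one arrow at a time} to \emph{arbitrary change of orientation}. Here I would argue as follows. Fix the underlying undirected graph $\overline{Q}$, and let $Q$ and $Q'$ be two orientations of $\overline{Q}$. Since $Q$ and $Q'$ have the same edge set, there is a (finite) subset $A \subseteq Q_1$ of arrows on which they disagree; enumerate $A = \{\alpha_1,\ldots,\alpha_m\}$. Reversing these arrows one at a time produces a finite chain of quivers $Q = Q^{(0)}, Q^{(1)} = r_{\alpha_1}Q^{(0)}, \ldots, Q^{(m)} = r_{\alpha_m}Q^{(m-1)} = Q'$, where at each step we reverse an arrow that currently points ``the wrong way.'' Note that reversing an arrow does not change the underlying graph, nor does it affect whether the quiver is a proper pseudotree (this depends only on $\mathrm{rank}\, H_1(\overline{Q},\mathbb{Z})$ and on $\overline{Q}$ not being a cycle, both orientation-independent), so each intermediate $Q^{(j)}$ is again a proper pseudotree and the bijection $r_{\alpha_{j+1},n}$ applies. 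Composing these bijections gives $|\mathcal{T}_Q(n)| = |\mathcal{T}_{Q^{(1)}}(n)| = \cdots = |\mathcal{T}_{Q'}(n)|$, as desired.

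I expect there to be essentially no obstacle: the entire content has been pushed into Proposition \ref{p.reflection} and the construction of $r_{\alpha,n}$. The only mild care needed is the bookkeeping in the previous paragraph — making sure that $r_{\alpha,n}$ really does land in $\mathcal{T}_{r_\alpha Q}(n)$ and not merely in the larger set $\mathcal{N}_{r_\alpha Q}(n)$ — but this is exactly what Proposition \ref{p.reflection}(4) guarantees, since $H_1(\Gamma_{r_\alpha M},\mathbb{Z}) \cong H_1(\Gamma_M,\mathbb{Z})$ shows that tree coefficient quivers go to tree coefficient quivers. If one wanted a cleaner statement, I might instead phrase the corollary as: the function $n \mapsto |\mathcal{T}_Q(n)|$ is an invariant of the isomorphism type of the undirected graph $\overline{Q}$, and the proof is the telescoping composition of reflections above.
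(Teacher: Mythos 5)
Your proposal is correct and follows essentially the same route as the paper: the paper's proof likewise writes $Q'$ as $[r_{\alpha_m}\cdots r_{\alpha_1}]Q$ and composes the bijections $r_{\alpha,n}$ built from Proposition \ref{p.reflection}. Your extra bookkeeping (intermediate quivers staying proper pseudotrees, trees mapping to trees via part (4)) is just an explicit spelling-out of what the paper leaves implicit.
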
 
\begin{proof} 
Let $Q'$ be a proper pseudotree with the same underlying graph as $Q$. Then there exist arrows $\alpha_1,\ldots , \alpha_m \in Q_1$ such that $Q' = [r_{\alpha_m}r_{\alpha_{m-1}}\cdots r_{\alpha_1}]Q$. The claim then follows from the bijection above.  
\end{proof} 
\begin{cor} 
Let $Q$ be a proper pseudotree. Then there exists a positive real number $a>1$ such that $|\mathcal{T}_Q(n)| = O(a^n)$. 
\end{cor}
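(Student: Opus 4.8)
The plan is to reduce the case of a proper pseudotree with acyclic central cycle to the equioriented case treated in Subsection \ref{subsection: equiori}, where an exponential bound on $\operatorname{NI}_Q$ is already available, and then to note that $|\mathcal{T}_Q(n)|$ is dominated by $\operatorname{NI}_Q(n)$. Concretely, I would first invoke the corollary immediately preceding this statement: for a proper pseudotree $Q$, the integer $|\mathcal{T}_Q(n)|$ depends only on the underlying undirected graph of $Q$. Writing $C$ for the unique central cycle of $Q$, reverse the arrows lying on $C$ as needed to obtain a quiver $Q'$ whose central cycle is equioriented, so that $Q' = Q_{n,T}$ has the shape of \eqref{eq: cycle and trees}. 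By that corollary, $|\mathcal{T}_{Q'}(n)| = |\mathcal{T}_Q(n)|$ for every $n$.

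Next I would observe that $Q'$ is again a proper pseudotree: it has the same underlying graph as $Q$, so $\operatorname{rank}H_1(Q',\mathbb{Z}) = 1$ and $Q'$ is not a tree; and since reversing cycle arrows leaves the attached rooted trees $T_i$ untouched, $Q'$ still has at least one non-trivial branch, for otherwise $Q$ itself would be an orientation of a simple cycle, i.e.\ of type $\tilde{\mathbb{A}}$, contradicting properness. Hence $Q'$ is not of type $\tilde{\mathbb{A}}$. This ensures that Corollary \ref{corollary: main cor} applies to $Q'$ and produces a real number $r \in (1,\infty)$ with $\operatorname{NI}_{Q'}(n) = O(r^n)$. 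Finally, from the decomposition $\mathcal{N}_{Q'}(n) = \mathcal{T}_{Q'}(n) \sqcup \mathcal{P}_{Q'}(n)$ one gets $|\mathcal{T}_{Q'}(n)| \le |\mathcal{N}_{Q'}(n)| = \operatorname{NI}_{Q'}(n)$, and combining the estimates yields $|\mathcal{T}_Q(n)| = |\mathcal{T}_{Q'}(n)| = O(r^n)$; taking $a := r > 1$ finishes the argument.

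Everything here is bookkeeping on top of results proved earlier in the section, so I do not expect a serious obstacle. The one point that genuinely requires attention is the verification that the equioriented model $Q'$ obtained by reversing arrows along $C$ is still a proper pseudotree — that is, that it is neither a tree nor of type $\tilde{\mathbb{A}}$ — so that Corollary \ref{corollary: main cor} legitimately applies to it; this is exactly the step where the hypothesis that $Q$ is proper (and not merely a pseudotree) is used.
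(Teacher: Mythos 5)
Your argument is correct and is essentially the paper's proof: reduce to the equioriented case via the orientation-independence corollary, then use the exponential bound $\operatorname{NI}_{Q'}(n)=O(r^n)$ from Corollary \ref{corollary: main cor} together with $|\mathcal{T}_{Q'}(n)|\le \operatorname{NI}_{Q'}(n)$. You simply spell out the details (including the easy check that the equioriented model is still a proper pseudotree) that the paper leaves implicit.
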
 
\begin{proof} 
This is true if the central cycle of $Q$ is equioriented. By the previous corollary, it must also be true for $Q$. 
\end{proof}
Note that the corresponding result does not hold for $\mathcal{P}_Q(n)$: $r_{\alpha}M$ might not be nilpotent, even if $M$ is. Nevertheless, we still have the following result. 
\begin{pro} 
Let $Q$ be a proper pseudotree. Then for each $n$, $|\mathcal{P}_Q(n)| \le |\mathcal{T}_Q(n)|$. 
\end{pro}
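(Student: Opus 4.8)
The plan is to exhibit an explicit injection $\mathcal{P}_Q(n) \hookrightarrow \mathcal{T}_Q(n)$ for each $n$. Given $M \in \mathcal{P}_Q(n)$, its coefficient quiver $\Gamma_M$ has $H_1(\Gamma_M,\mathbb{Z}) \cong \mathbb{Z}$, so $\Gamma_M$ contains a unique embedded cycle; call it $Z$. Since $M$ is nilpotent, $\Gamma_M$ is acyclic as a directed graph, so $Z$ is not equioriented: it has at least one vertex which is a source along $Z$ and at least one which is a sink along $Z$. My first step is to use the structure of $\Gamma_M$ over the central cycle $C$ of $Q$ — recall from \cite{jun2021coefficient} that the winding $c_M$ maps $Z$ onto $C$, and since $C$ is acyclic (which is exactly the hypothesis under which $\mathcal{P}_Q(n)$ is nonempty), the non-equioriented arrows of $Z$ lie over the non-equioriented arrows of $C$. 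I would single out a canonical arrow $e$ of $Z$ to delete: for instance, fix once and for all an arrow $\alpha_0 \in C_1$ whose orientation is "against" a chosen traversal direction of $C$, and let $e$ be the unique arrow of $Z$ lying over $\alpha_0$ (uniqueness holds because $c_M$ restricted to $Z$ is a bijection onto $C$).

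The second step is to define $\Phi(M)$ to be the representation whose coefficient quiver is $\Gamma_M$ with the arrow $e$ removed (keeping all vertices and all other arrows), together with the restricted winding map. Removing one edge from a graph whose first homology is $\mathbb{Z}$, where that edge lies on the unique cycle, yields a graph with trivial first homology; hence $\Phi(M) \in \mathcal{T}_Q(n')$ for some $n' \le n$, and in fact $n' = n$ since we delete no vertices. I must then check that $\Phi(M)$ is still indecomposable and still a valid coefficient quiver, i.e. that the restricted map is still a winding: the winding condition only constrains pairs of arrows with the same color and shared source or target, and deleting an arrow cannot create a violation, so this is immediate. Indecomposability requires more care: deleting $e$ could in principle disconnect $\Gamma_M$. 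But $Z$ is the \emph{only} cycle, so $\Gamma_M \setminus e$ is connected iff $e$ is not a bridge — and $e$ lies on the cycle $Z$, hence is not a bridge. So $\Phi(M)$ is connected, and a connected coefficient quiver corresponds to an indecomposable representation; thus $\Phi(M) \in \mathcal{T}_Q(n)$.

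The third and decisive step is injectivity of $\Phi$. Here is where the real work lies. Suppose $\Phi(M) \cong \Phi(M')$; I want to recover $M$ from $\Phi(M)$, or at least show $M \cong M'$. The idea is that $e$ is the unique arrow that, when \emph{added back}, (i) has color $\alpha_0$, (ii) goes between two vertices of $\Phi(M)$ colored $s(\alpha_0)$ and $t(\alpha_0)$ respectively, and (iii) re-creates a cycle. The subtlety is that there may be several ways to add an $\alpha_0$-colored arrow back and get \emph{a} pseudotree; I need to argue that only one of them reproduces an indecomposable \emph{nilpotent} representation isomorphic to the original, or — better — to pin down the endpoints of $e$ intrinsically from $\Phi(M)$. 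In $\Gamma_M \setminus e$, the two endpoints $s(e)$ and $t(e)$ are distinguished as follows: the winding condition forces that in $\Gamma_M$ no other arrow has color $\alpha_0$ and the same source as $e$, and none has the same target; so in $\Phi(M)$, $s(e)$ is the unique $s(\alpha_0)$-colored vertex with no outgoing $\alpha_0$-arrow that lies on the path $c_M^{-1}(C \setminus \alpha_0)$, and similarly for $t(e)$. Making this precise — identifying the "loose ends" of the broken cycle purely from the combinatorics of $\Phi(M)$ together with the datum that it came from breaking a $C$-cycle at color $\alpha_0$ — is the main obstacle, and I expect it to occupy the bulk of the proof. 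Once the endpoints are canonically determined, $\Phi$ has a well-defined left inverse on its image, so it is injective, giving $|\mathcal{P}_Q(n)| \le |\mathcal{T}_Q(n)|$.
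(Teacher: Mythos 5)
Your overall strategy (delete a cycle arrow of a fixed colour $\alpha_0$ to get a map $\mathcal{P}_Q(n)\to\mathcal{T}_Q(n)$) is a legitimate dual of what the paper does, but as written it has a genuine gap, and one of the claims you lean on is false. The unique (undirected) cycle $Z$ of $\Gamma_M$ need \emph{not} map bijectively onto the central cycle $C$: since $C$ is acyclic, $Z$ can wind around $C$ several times while $\Gamma_M$ stays acyclic and the winding condition holds. For instance, if $C$ consists of two arrows $\alpha,\beta\colon v_1\to v_2$ (with a tree attached to make $Q$ proper), the quiver with vertices $u_1,u_1'$ over $v_1$, $u_2,u_2'$ over $v_2$ and arrows $u_1\xrightarrow{\alpha}u_2$, $u_1'\xrightarrow{\beta}u_2$, $u_1'\xrightarrow{\alpha}u_2'$, $u_1\xrightarrow{\beta}u_2'$ is a connected, acyclic coefficient quiver in $\mathcal{P}_Q(4)$ whose cycle winds twice. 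So there may be several $\alpha_0$-coloured arrows on $Z$, your map requires an arbitrary choice among them, and your proposed intrinsic characterization of $s(e)$, $t(e)$ (``the unique vertex over $s(\alpha_0)$ with no outgoing $\alpha_0$-arrow on the path'') is exactly the part you admit you have not proved. What it really requires is the structural fact that $c_M^{-1}(C)$ is \emph{precisely} $Z$ --- no further $C$-coloured vertices or arrows can occur in $\Gamma_M$ (the winding condition blocks attaching a $C$-coloured arrow at a vertex of $Z$, and components of the preimages of the attached trees are thin, so connectivity forces the claim). Granting that, after deleting $e$ the $C$-preimage is a path whose two extremal vertices are $s(e)$ and $t(e)$, re-adding an $\alpha_0$-arrow between them recovers $M$, and injectivity follows for \emph{any} choice of $e$; but none of this is carried out in your proposal, and it is the actual content of the proof.

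For comparison, the paper avoids both the canonical-choice and the injectivity issues by going in the opposite direction: it deletes, for each $N\in\mathcal{P}_Q(n)$, \emph{some} arrow over $C$ to get $T(N)\in\mathcal{T}_Q(n)$, and then constructs a single map $P\colon\mathcal{T}_Q(n)\to\mathcal{P}_Q(n)$ (add an arrow between the two vertices of $c_M^{-1}(C)$ adjacent to exactly one vertex of $c_M^{-1}(C)$ when such an arrow of $Q$ exists, and send everything else to a fixed basepoint) such that $P(T(N))=N$. Surjectivity of $P$ gives $|\mathcal{P}_Q(n)|\le|\mathcal{T}_Q(n)|$ with no need for the deletion to be well-defined or injective. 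If you want to keep your injection, you must prove the ``$c_M^{-1}(C)=Z$'' lemma and rephrase the recovery argument accordingly; otherwise the surjection route is the shorter path.
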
 
\begin{proof} 
Without loss of generality, we assume $\mathcal{P}_Q(n) \neq \emptyset$. Let $C$ denote the central cycle of $Q$. For each $N \in \mathcal{P}_Q(n)$, $c_N^{-1}(C) \neq \emptyset$, and so we may choose an arrow $\alpha_N \in c_N^{-1}(C)$. Let $T(N)$ denote the representation obtained from $N$ by deleting $\alpha_N$ from $\Gamma_N$ (but not its source or target). Then $T(N) \in \mathcal{T}_Q(n)$. This implies the existence of a surjective function $P: \mathcal{T}_Q(n) \rightarrow \mathcal{P}_Q(n)$, which we construct as follows: fix any representation $Z \in \mathcal{P}_Q(n)$. For any $M \in \mathcal{T}_Q(n)$, $c_M^{-1}(C)$ is an orientation of a path, or a single vertex, or empty. In the latter two cases, set $P(M) = Z$. Otherwise, $c_M^{-1}(C)$ contains two vertices $\ell_1, \ell_2$ which are adjacent to exactly one vertex in $c_M^{-1}(C)$\footnote{In the equioriented case, these are simply the start and finish of the spine.}. There is at most one arrow $\alpha \in Q$ between $c_M(\ell_1)$ and $c_M(\ell_2)$: if this arrow exists, let $P(M)$ be the representation obtained by adding an $\alpha$-colored arrow between $\ell_1$ and $\ell_2$; otherwise, let $P(M) = Z$. For each $N \in \mathcal{P}_Q(n)$, $T(N)$ satisfies $P(T(N)) = N$. Hence, $P$ is a surjection. 
\end{proof} 

\begin{cor} 
Let $Q$ be a proper pseudotree. 
In particular, $Q \approx_{\nil} Q'$ if and only if $Q'$ is a proper pseudotree. 
\end{cor}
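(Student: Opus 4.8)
The plan is to prove the apparently stronger statement that $Q \approx_{\nil} Q_{\bullet \rightarrow \bullet}$ for every proper pseudotree $Q$, with $Q_{\bullet \rightarrow \bullet}$ as in \eqref{eq: rooted tree and a loop}; the displayed biconditional will then drop out from the classification results recalled above. I would split on the orientation of the central cycle $C$ of $Q$. If $C$ is equioriented, then $Q \approx_{\nil} Q_{\bullet \rightarrow \bullet}$ is exactly Corollary \ref{corollary: main cor}, so the real work is the case where $C$ is acyclic. In that case let $Q_{\mathrm{eq}}$ denote the proper pseudotree obtained from $Q$ by re-orienting $C$ so that it becomes equioriented (leaving the pendant trees untouched), so that Corollary \ref{corollary: main cor} applies to $Q_{\mathrm{eq}}$.

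For $Q \leq_{\nil} Q_{\bullet \rightarrow \bullet}$ I would simply assemble the two results of this subsection proved just above: $\NI_Q(n) = |\mathcal{T}_Q(n)| + |\mathcal{P}_Q(n)| \leq 2\,|\mathcal{T}_Q(n)|$, using $|\mathcal{P}_Q(n)| \leq |\mathcal{T}_Q(n)|$, and $|\mathcal{T}_Q(n)| = O(a^n)$ for some $a \in (1,\infty)$. Since $\NI_{Q_{\bullet \rightarrow \bullet}}(n) > d^{\,n}$ for $n \gg 0$ with $d = \sqrt 5 / 2$ (see the proof of Proposition \ref{proposition: speicla case}), the $\lceil \log_d(\cdot)\rceil$ device from that proof upgrades $\NI_Q(n) = O(a^n)$ to $\NI_Q(n) = O\big(\NI_{Q_{\bullet \rightarrow \bullet}}(Kn)\big)$ with $K = \lceil\log_d a\rceil$, that is, $Q \leq_{\nil} Q_{\bullet \rightarrow \bullet}$.

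The reverse inequality $Q_{\bullet \rightarrow \bullet} \leq_{\nil} Q$ is where I expect the main obstacle to lie. The obvious approach -- imitating the injective ``spine-stretching'' map $M \mapsto \tilde M$ from the proof of Corollary \ref{corollary: main cor} -- fails here, because an acyclic central cycle carries no oriented path winding around it, so there is nothing for the spine to stretch along. I would instead detour through $Q_{\mathrm{eq}}$, exploiting that the coefficient quiver of a tree representation is unchanged under re-orientation: $|\mathcal{T}_Q(n)| = |\mathcal{T}_{Q_{\mathrm{eq}}}(n)|$ by the corollary on orientation-independence above; applying $|\mathcal{P}| \leq |\mathcal{T}|$ to $Q_{\mathrm{eq}}$ gives $|\mathcal{T}_{Q_{\mathrm{eq}}}(n)| \geq \tfrac12 \NI_{Q_{\mathrm{eq}}}(n)$; and since $Q_{\mathrm{eq}}$ has equioriented central cycle, Corollary \ref{corollary: main cor} gives $Q_{\mathrm{eq}} \approx_{\nil} Q_{\bullet \rightarrow \bullet}$, hence $\NI_{Q_{\mathrm{eq}}}(n) \geq c_0^{\,n}$ for some $c_0 \in (1,\infty)$ and $n \gg 0$. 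Chaining, $\NI_Q(n) \geq |\mathcal{T}_Q(n)| = |\mathcal{T}_{Q_{\mathrm{eq}}}(n)| \geq \tfrac12 c_0^{\,n}$ for $n \gg 0$, and one more application of the $\lceil\log\rceil$ device (comparing the purely exponential $\NI_{Q_{\bullet \rightarrow \bullet}}$ against $\tfrac12 c_0^{\,n}$; cf.\ Definition \ref{d.bigO}) gives $Q_{\bullet \rightarrow \bullet} \leq_{\nil} Q$. Together with the previous paragraph, $Q \approx_{\nil} Q_{\bullet \rightarrow \bullet}$.

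Finally I would deduce the biconditional. If $Q'$ is a proper pseudotree then $Q' \approx_{\nil} Q_{\bullet \rightarrow \bullet} \approx_{\nil} Q$ by what was just proved (note $Q_{\bullet \rightarrow \bullet}$ is itself a proper pseudotree). Conversely, assume $Q'$ is connected with $Q \approx_{\nil} Q'$, so $Q' \approx_{\nil} Q_{\bullet \rightarrow \bullet}$. Since $\NI_{Q_{\bullet \rightarrow \bullet}}$ is unbounded, $Q'$ has neither finite nor bounded representation type, so by \cite[Theorem 5.3]{jun2020quiver} and \cite[Theorem 5.14]{jun2020quiver} it is neither a tree nor a cycle quiver; and it cannot have $\operatorname{rank} H_1(Q',\ZZ) \geq 2$, for then \cite[Theorem 4.6, Proposition 5.4]{jun2020quiver} would give $Q' \approx_{\nil} \wild_2$, whereas $\NI_{\wild_2}$ grows faster than every exponential (e.g.\ $\NI_{\wild_2}(n) \geq n!$, by feeding a rigid single-path nilpotent partial injection into one loop and an arbitrary one into the other) while $\NI_{Q_{\bullet \rightarrow \bullet}}(n) = O\big((\tfrac{1+\sqrt5}{2})^n\big)$. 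A connected quiver that is none of these is a proper pseudotree, so $Q'$ is a proper pseudotree, as required. Apart from the lower-bound step in paragraph three, everything here is bookkeeping built on Corollary \ref{corollary: main cor} and the two results preceding it in this subsection.
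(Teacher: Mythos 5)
Your proof follows the paper's route: the upper bound $Q\le_{\nil}Q_{\bullet\rightarrow\bullet}$ from $\NI_Q(n)=|\mathcal{T}_Q(n)|+|\mathcal{P}_Q(n)|=O(a^n)$, and the lower bound by passing to the re-orientation $Q_{\mathrm{eq}}$ with equioriented central cycle and invoking Corollary \ref{corollary: main cor} together with the orientation-independence of $|\mathcal{T}_Q(n)|$ --- this is exactly the paper's argument (its auxiliary quiver $Q'$ is your $Q_{\mathrm{eq}}$). One intermediate step is overstated as written: $Q_{\mathrm{eq}}\approx_{\nil}Q_{\bullet\rightarrow\bullet}$ by itself only yields an exponential lower bound for $\NI_{Q_{\mathrm{eq}}}$ along an arithmetic progression of dimensions (those of the form $yn$), not $\NI_{Q_{\mathrm{eq}}}(n)\ge c_0^{\,n}$ for \emph{every} large $n$; but this detour is unnecessary, since $\mathcal{P}_{Q_{\mathrm{eq}}}=\emptyset$ gives the pointwise inequality $\NI_{Q_{\mathrm{eq}}}(n)=|\mathcal{T}_{Q_{\mathrm{eq}}}(n)|=|\mathcal{T}_Q(n)|\le\NI_Q(n)$, whence $Q_{\bullet\rightarrow\bullet}\le_{\nil}Q_{\mathrm{eq}}\le_{\nil}Q$ by transitivity of $\le_{\nil}$, which is precisely how the paper concludes. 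Your explicit handling of the ``only if'' direction is sound, though it uses the super-exponential growth of $\NI_{\wild_2}$, which the paper establishes only in the following subsection; also the parenthetical bound $\NI_{\wild_2}(n)\ge n!$ is too large for the construction you sketch (nilpotency forces the second partial injection to be ``descending'' relative to the path, giving Bell-number rather than factorial growth), but any super-exponential bound, e.g.\ the paper's $(n/2)!$, serves the same purpose.
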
 

\begin{proof} 
For each $n$, we have that
\[ 
\operatorname{NI}_Q(n) = |\mathcal{N}_Q(n)| = |\mathcal{T}_Q(n)| + |\mathcal{P}_Q(n)| = O(a^n),
\] 
with $a >1$. Hence, $Q \le_{\textrm{nil}} Q_{\bullet \xrightarrow[]{} \bullet}$. Conversely, let $Q'$ be a quiver with the same underlying graph as $Q$ but with an equioriented central cycle. Then $\NI_{Q'}(n) = |\mathcal{T}_Q(n)| \le \NI_Q(n)$ for all $n$, and so $Q' \le_{\textrm{nil}} Q$. But $Q_{\bullet \xrightarrow[]{} \bullet} \le_{\textrm{nil}} Q'$ by Corollary \ref{corollary: main cor}, from which the claim follows.
\end{proof}

\subsection{The growth of nilpotent indecomposables} 

In \cite[Questions 5.17, 5.18]{jun2020quiver} the first and third authors posed the following questions: 
\begin{enumerate} 
\item Let $Q$ and $Q'$ be proper pseudotrees. Does it follow that $Q\approx_{\nil} Q'$?
\item Are there any pseudotrees $Q$, with $Q \approx_{\nil} \wild_2$? Here, $\wild_2$ is the quiver with one vertex and two loops.
\end{enumerate} 
The results above show that the first question is true. It turns out that the second is false, because $\textrm{NI}_{\wild_2}(n)$ grows faster than any exponential function. To be precise, we have the following.

\begin{pro}
Let $Q=\wild_2$ be the two loop quiver. Then, $\NI_Q(n)\geq O((n/2)!)$. 
\end{pro}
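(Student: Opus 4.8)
The goal is to exhibit, in each dimension $n$, at least on the order of $(n/2)!$ pairwise non-isomorphic nilpotent indecomposable $\FF_1$-representations of $\wild_2$. By the dictionary between $\FF_1$-representations and windings $c:\Gamma\to Q$ (with $\Gamma$ acyclic for nilpotent representations), it suffices to construct an injectively-parametrized family of acyclic coefficient quivers over $\wild_2$ whose underlying $\FF_1$-representations are indecomposable. Writing $\alpha,\beta$ for the two loops of $\wild_2$, a coefficient quiver over $\wild_2$ is just an acyclic quiver $\Gamma$ on vertex set $\{1,\dots,n\}$ together with a $2$-coloring of its arrows by $\{\alpha,\beta\}$, subject to the winding condition: at each vertex there is at most one outgoing and at most one incoming arrow of each color. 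The plan is to use permutations to encode the family.

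**Key construction.** I would work with $n=2m$ and consider vertices split into two blocks, a "source block" $\{u_1,\dots,u_m\}$ and a "sink block" $\{w_1,\dots,w_m\}$. Put an $\alpha$-colored arrow $u_i\to w_i$ for every $i$ (a perfect matching), and, for a fixed permutation $\sigma\in S_m$, put a $\beta$-colored arrow $u_i\to w_{\sigma(i)}$ for every $i$. The winding condition is automatic: each $u_i$ has exactly one outgoing $\alpha$- and one outgoing $\beta$-arrow, each $w_j$ has exactly one incoming $\alpha$-arrow and (since $\sigma$ is a bijection) exactly one incoming $\beta$-arrow; there are no incoming arrows at the $u_i$ or outgoing arrows at the $w_j$, so the graph is acyclic. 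Call the resulting representation $M_\sigma$. First I would check that $M_\sigma$ is indecomposable: a direct-sum decomposition would correspond to partitioning the vertex set into two nonempty subsets, each closed under the bipartite adjacency given by the $\alpha$- and $\beta$-matchings; but $\{$block$\}$-connectivity of the "union matching" graph is governed by the cycle structure of $\sigma$ only when $m=1$... so in fact I must be more careful — I would instead take $\sigma$ to be an $m$-cycle, or, better, restrict to permutations for which the union of the identity matching and the $\sigma$-matching forms a single cycle on $\{w_1,\dots,w_m\}$ (equivalently $\sigma$ has a single orbit), which still leaves $(m-1)!$ choices. For such $\sigma$ the bipartite graph underlying $\Gamma$ is connected, hence $M_\sigma$ is indecomposable.

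**Counting and non-isomorphism.** Next I would bound the multiplicity with which a given isomorphism class arises. A coefficient isomorphism $\Gamma_{M_\sigma}\to\Gamma_{M_\tau}$ must preserve colors, sources/targets of arrows, and the bipartition into source/sink vertices (forced by in/out-degrees). Such an isomorphism is determined by a bijection of source vertices and a bijection of sink vertices intertwining both matchings; if it fixes the $\alpha$-matching and sends the $\sigma$-matching to the $\tau$-matching, then $\sigma$ and $\tau$ are conjugate by that relabeling. Since we are quotienting by at most $|\mathrm{Aut}|$-many relabelings and the number of relabelings of $\{1,\dots,m\}$ is $m!$, the number of isomorphism classes among $\{M_\sigma\}$ is at least $(\text{number of single-orbit }\sigma)/m! = (m-1)!/m!$... which is too lossy. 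The fix is standard: instead of a free matching, rigidify by also attaching a distinguishing gadget — e.g. append a directed $\alpha$-path of length $i$ hanging off $w_i$ (or mark vertex $w_i$ by an $\alpha$-tail of $i-1$ extra vertices), which is compatible with the winding condition and forces any coefficient isomorphism to fix each $w_i$ individually. This destroys all nontrivial automorphisms and makes $\sigma\mapsto M_\sigma$ injective on isomorphism classes, at the cost of changing the dimension from $2m$ to $O(m^2)$; to keep the dimension linear one instead uses $O(\log m)$ binary "flag" vertices per sink vertex, giving dimension $2m+O(m\log m)$, hence still at least $\bigl((n/2)/(1+o(1))\bigr)!$ which absorbs into $O((n/2)!)$ after adjusting constants (recall $\NI$-growth is measured up to the $\le_{\nil}$ rescaling, so a factor-$(1+o(1))$ change in the argument of the factorial is harmless). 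The upshot: $\NI_{\wild_2}(n)\ge (cn)!$ for some $c>0$, and since $(cn)!\ge O((n/2)!)$ (indeed it dominates any fixed exponential), the claim follows.

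**Main obstacle.** The genuinely delicate point is the rigidification: one must add enough combinatorial structure to kill automorphisms and separate isomorphism classes, while (i) staying inside the winding condition for $\wild_2$ (only two colors, at most one in/out arrow per color per vertex — this is a real constraint, since a naive "label each $w_i$ by $i$ distinct tails" may overrun the degree bound), (ii) keeping each representation indecomposable, and (iii) controlling the dimension overhead so that the factorial lower bound survives with argument $\ge n/2$ (or reducing to the observation that the precise argument is irrelevant up to $\approx_{\nil}$). Everything else — acyclicity, the winding condition for the matchings, and the reduction from coefficient isomorphisms to conjugacy — is routine bookkeeping with the framework already set up in the excerpt.
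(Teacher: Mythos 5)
Your overall strategy---parametrizing acyclic two-colored coefficient quivers over $\wild_2$ by permutations---is in the right spirit, and you correctly identify the crux: a bare pair of matchings collapses under isomorphism (all single-orbit $\sigma$ are conjugate, so without rigidification the family $\{M_\sigma\}$ contributes essentially one isomorphism class per dimension). The genuine gap is in your rigidification step. The gadgets you propose cost either $\Theta(m^2)$ extra vertices (tails of length $i$) or $\Theta(m\log m)$ (binary flags), so in total dimension $n$ your construction yields only on the order of $(c\sqrt{n})!$ or $(cn/\log n)!$ classes, not $(n/2)!$. The claim that this ``absorbs into $O((n/2)!)$ after adjusting constants'' via $\le_{\nil}$-rescaling fails twice: the loss in the argument of the factorial is a factor of $\Theta(\log n)$, not $1+o(1)$, and $\le_{\nil}$ only allows rescaling the argument by a fixed constant $y$, under which $(cn/\log n)!$ is still eventually smaller than $(n/2)!$; moreover the proposition is a plain lower bound on $\NI_Q(n)$, so an equivalence relation cannot be invoked to prove it. (Your weaker super-exponential bound would still suffice for the paper's application, namely that $\wild_2\not\approx_{\nil}Q_{\bullet\rightarrow\bullet}$, but it does not establish the stated estimate.)

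The missing idea, which is exactly how the paper proceeds, is to rigidify at zero dimensional cost: for $n=2k$, let one color (say red) form a directed path through all $2k$ vertices $2k\to 2k-1\to\cdots\to 1$, and let the other color (blue) give $k$ arrows from the top half $\{2k,\dots,k+1\}$ to the bottom half $\{k,\dots,1\}$ according to an arbitrary bijection. All arrows decrease the label, so the quiver is acyclic (nilpotent); the winding condition holds since each vertex meets at most one blue arrow in each direction and the red arrows form a path; the red path makes the coefficient quiver connected, hence the representation indecomposable; and since a color-preserving automorphism of a directed path is the identity, any coefficient isomorphism fixes every vertex, so distinct bijections give pairwise non-isomorphic representations. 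This produces $k!=(n/2)!$ isomorphism classes in dimension exactly $n$, with no gadgets and no counting losses --- the path plays simultaneously the roles of your connectivity restriction and your rigidifying gadget.
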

\begin{proof}	
We may assume that $n$ is an even number, say $n=2k$. We show that there exists a set of distinct isomorphism classes whose cardinality is $k!=(n/2)!$. We use the red color for one loop and the blue color for the other. To construct a coefficient quiver for $\wild_2$, there are total $2k$ vertices: 
\begin{equation}\label{eq: verticesonly}
		\begin{tikzcd}[arrow style=tikz,>=stealth,row sep=2em]
	2k    & (2k-1)   & \cdots &4  & 3  &    2   & 1
\end{tikzcd}
\end{equation}
We may first a string with $k$ red arrows as follows:
\begin{equation}\label{qqq}
	\begin{tikzcd}[arrow style=tikz,>=stealth,row sep=2em]
		2k   \arrow[r,red, bend right, swap,thick, dashed] & (2k-1) \arrow[r,red, bend right, swap,thick, dashed]  & \cdots \arrow[r,red, bend left, swap,thick, dashed] &4 \arrow[r,red, bend left, swap,thick, dashed]  & 3  \arrow[r,red,thick, dashed] &    2  \arrow[r,red, bend right, swap,thick, dashed]  & 1
	\end{tikzcd}
\end{equation}
Then, we add $k$ blue arrows as follows: we permute $A=\{2k,2k-1,2k-2,\dots,k+1\}$ (resp.~$B=\{k,k-1,\dots,1\}$) to make $A=\{a_1,\dots,a_{k+1}\}$ (resp.~$B=\{b_1,\dots,b_k\}$) and draw a blue arrow from $a_i$ to $b_i$. In this way, we obtain a class of coefficient quivers. One can check the number of such coefficient quivers is at least $k!=(n/2)!$, considering isomorphism classes. This proves our claim. 
\end{proof} 

This allows us to completely classify (finite connected) quivers with respect to the equivalence relation $\approx_{\textrm{nil}}$. More precisely, we have the following:

\begin{cor} 
Let $Q$ be a finite connected quiver. Then $Q \approx_{\textrm{nil}} Q'$, where $Q'$ is exactly one of the following four quivers: 
\begin{enumerate} 
\item $\wild_0 = \bullet$
\item $\wild_1 = \begin{tikzcd} \bullet \arrow[out=30,in=-30,loop] \end{tikzcd}$
\item $Q_{\bullet \xrightarrow[]{} \bullet} = \begin{tikzcd}
	\bullet \arrow[r] \arrow[out=150,in=210,loop] &	\bullet 
\end{tikzcd}$
\item $\wild_2 = \begin{tikzcd} \bullet \arrow[out=30,in=-30,loop] \arrow[out=150,in=210,loop] \end{tikzcd}$
\end{enumerate}  
Furthermore, $Q \approx_{\textrm{nil}} \wild_0$ if and only if $Q$ is a tree, $Q \approx_{\textrm{nil}} \wild_1$ if and only if $Q$ is of type $\tilde{\mathbb{A}}$, $Q \approx_{\textrm{nil}} Q_{\bullet \xrightarrow[]{} \bullet}$ if and only if $Q$ is a proper pseudotree, and $Q \approx_{\textrm{nil}} \wild_2$ otherwise.
\end{cor}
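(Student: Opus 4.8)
The plan is to reduce the statement to a case analysis on $\operatorname{rank} H_1(Q,\mathbb{Z})$ and then to invoke the classification results already assembled above. First I would record the elementary observation that every finite connected quiver $Q$ lies in exactly one of four mutually exclusive families: $Q$ is a tree; $Q$ is of type $\tilde{\mathbb{A}}$ (so $\operatorname{rank} H_1(Q,\mathbb{Z}) = 1$ and $Q$ is an orientation of a simple cycle); $Q$ is a proper pseudotree (so $\operatorname{rank} H_1(Q,\mathbb{Z}) = 1$ but $Q$ is not a simple cycle); or $\operatorname{rank} H_1(Q,\mathbb{Z}) \geq 2$. Exhaustiveness and mutual exclusivity are immediate from the definitions in Section \ref{section: preliminaries}.

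Next I would match each family to its representative. If $Q$ is a tree then $Q \approx_{\nil} \wild_0$ by \cite[Theorem 5.3]{jun2020quiver}; if $Q$ is of type $\tilde{\mathbb{A}}$ then $Q \approx_{\nil} \wild_1$ by \cite[Theorem 5.14]{jun2020quiver}; if $Q$ is a proper pseudotree then $Q \approx_{\nil} Q_{\bullet \rightarrow \bullet}$ by the final corollary of Subsection \ref{subsection: reversing arrows} (which rests on Corollary \ref{corollary: main cor}); and if $\operatorname{rank} H_1(Q,\mathbb{Z}) \geq 2$ then $Q \approx_{\nil} \wild_2$ by \cite[Theorem 4.6, Proposition 5.4]{jun2020quiver}. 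Since $\wild_0$ is a tree, $\wild_1$ is of type $\tilde{\mathbb{A}}$, $Q_{\bullet \rightarrow \bullet}$ is a proper pseudotree, and $\wild_2$ has $\operatorname{rank} H_1 = 2$, each of the four model quivers sits in its own family. Consequently, once the four representatives are shown to be pairwise $\approx_{\nil}$-inequivalent, transitivity of $\approx_{\nil}$ forces the ``furthermore'' clause: e.g.\ if $Q \approx_{\nil} \wild_0$ then $Q$ cannot lie in any of the other three families, hence $Q$ is a tree, and symmetrically for the remaining three.

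The only genuinely new step is the pairwise inequivalence of $\wild_0$, $\wild_1$, $Q_{\bullet \rightarrow \bullet}$, $\wild_2$. The separations involving $\wild_0$ and $\wild_1$ are cheap: $\wild_0$ has finite representation type whereas the other three do not, and $\wild_1$ has bounded representation type whereas $Q_{\bullet \rightarrow \bullet}$ and $\wild_2$ do not (both by \cite[Theorems 5.3, 5.14]{jun2020quiver}). The crucial comparison is $Q_{\bullet \rightarrow \bullet} \not\approx_{\nil} \wild_2$: by the recursion of Subsection \ref{subsection: one loop} (cf.\ Example \ref{example: one arrow and two arrows}) one has $\NI_{Q_{\bullet \rightarrow \bullet}}(n) = O(c^{n})$ with $c = (1+\sqrt{5})/2$, so for every fixed $y \in \NN$ the function $n \mapsto \NI_{Q_{\bullet \rightarrow \bullet}}(yn)$ is bounded above by an exponential; but the Proposition immediately preceding this corollary gives $\NI_{\wild_2}(n) \geq O((n/2)!)$, which outgrows every exponential, so no inequality $\NI_{\wild_2}(n) \leq x\,\NI_{Q_{\bullet \rightarrow \bullet}}(yn)$ can hold for $n \gg 0$; hence $\wild_2 \not\leq_{\nil} Q_{\bullet \rightarrow \bullet}$. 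I do not expect a serious obstacle here: the corollary is essentially a synthesis of the classification theorems for trees, cycles, and proper pseudotrees together with the factorial lower bound on $\NI_{\wild_2}$, and the only point requiring care is organizing the four-way partition so that it is both exhaustive and exclusive, then checking that the four model quivers realize the four distinct growth regimes (finite, bounded, exponential, super-exponential).
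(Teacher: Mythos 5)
Your proposal is correct and matches the paper's (implicit) argument: the corollary is presented there as an immediate synthesis of the tree/cycle classification theorems, Corollary \ref{corollary: main cor} with its acyclic-cycle extension, and the factorial lower bound on $\NI_{\wild_2}$, which is exactly how you organize it. The four-way partition, the matching of each class to its representative, and the separation of $Q_{\bullet \rightarrow \bullet}$ from $\wild_2$ via exponential versus super-exponential growth are all exactly the intended steps, so nothing is missing.
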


Using the terminology of \cite{jun2020quiver}, $Q\le \wild_0$ if and only if $Q$ has finite representation type over $\mathbb{F}_1$, and $Q \le \wild_1$ if and only if $Q$ has bounded representation type over $\mathbb{F}_1$. As a consequence of the results above, $Q \le_{\textrm{nil}} Q_{\bullet \xrightarrow[]{} \bullet}$ if and only if $\NI_Q = O(a^n)$ for some $a>1$ and $Q \approx_{\textrm{nil}} Q_{\bullet \xrightarrow[]{} \bullet}$ if and only if $\NI_Q \approx_{\textrm{nil}} a^n$ for some $a>1$.\footnote{See Remark \ref{r.functions} above.}


\section{Lie algebras of pseudotrees} \label{section: Lie algebra}

In this section, we let $Q$ be a (not necessarily proper) pseudotree unless otherwise stated. Let $\mathcal{T}_Q := \bigcup_{n \in \mathbb{N}}{\mathcal{T}_Q(n)}$ and  $\mathcal{P}_Q := \bigcup_{n \in \mathbb{N}}{\mathcal{P}_Q(n)}$, where $\mathcal{T}_Q(n)$ (resp.~$\mathcal{P}_Q(n)$) is the set of isomorphism classes of $n$-dimensional, nilpotent indecomposable tree (resp.~proper pseudotree) representations of $Q$. 

Let $\mathfrak{n}:=\mathfrak{n}_{Q,\nil}$ denote the Lie algebra of nilpotent indecomposable representations as in Section \ref{subsection: Hall algebras}, and $\mathfrak{p}:=\textrm{Span}(\mathcal{P}_Q)$. The following are straightforward from the definitions. 

\begin{lem}\label{l: ideal}
	With the same notation as above, the following hold. 
	\begin{enumerate}
		\item 
		If $0 \rightarrow X \rightarrow E \rightarrow Y \rightarrow 0$ is a short exact sequence in $\operatorname{Rep}(Q,\FF_1)_{\nil}$ and either $X$ or $Y$ is in $\mathcal{P}_Q$, then $E \in \mathcal{P}_Q$ as well. In particular, $\mathfrak{p}$ is a Lie ideal of $\mathfrak{n}$.
		\item 
		If $X$ and $Y$ are both in $\mathcal{P}_Q$, then $E \cong X\oplus Y$. In particular, $\mathfrak{p}$ is abelian.	
	\end{enumerate}
\end{lem}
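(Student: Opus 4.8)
The statement has two parts, both of which follow from the structural description of coefficient quivers under short exact sequences (Lemma \ref{lemma: lemma from p1}(3)). The plan is to translate everything into the combinatorics of $\Gamma_E$, $\Gamma_X$, $\Gamma_Y$ and their first homology.

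For part (1), recall that by Lemma \ref{lemma: lemma from p1}(3), given the short exact sequence $0 \to X \to E \to Y \to 0$, the coefficient quiver $\Gamma_E$ is obtained from $\Gamma_X \sqcup \Gamma_Y$ by adding some arrows, each directed from a sink of $\Gamma_Y$ to a source of $\Gamma_X$ (of the appropriate color). Since $E$ is indecomposable and nilpotent, $\Gamma_E$ is a connected acyclic pseudotree (using that $Q$ is a pseudotree, together with the results cited from \cite{jun2021coefficient}), so $\operatorname{rank} H_1(\Gamma_E,\mathbb{Z}) \le 1$. First I would observe that adding a single edge to a graph can only increase $\operatorname{rank} H_1$ by at most $1$, and more precisely: if a graph $G'$ is obtained from $G$ by adding edges one at a time, then $\operatorname{rank} H_1(G') \ge \operatorname{rank} H_1(G)$ since adding an edge either connects two distinct components (no change in $H_1$) or adds a cycle (rank increases by $1$), and in neither case does it decrease. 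Applying this with $G = \Gamma_X \sqcup \Gamma_Y$ and $G' = \Gamma_E$ gives
\[
\operatorname{rank} H_1(\Gamma_E,\mathbb{Z}) \ge \operatorname{rank} H_1(\Gamma_X,\mathbb{Z}) + \operatorname{rank} H_1(\Gamma_Y,\mathbb{Z}).
\]
If $X \in \mathcal{P}_Q$ (resp.\ $Y \in \mathcal{P}_Q$) then the right-hand side is at least $1$, forcing $\operatorname{rank} H_1(\Gamma_E,\mathbb{Z}) = 1$, i.e.\ $E \in \mathcal{P}_Q$ (it is already known to be indecomposable nilpotent). The "in particular" clause then follows: the Lie bracket of classes in $\mathfrak{n}$ is a signed sum of classes $[E]$ appearing as middle terms of extensions $0 \to X \to E \to Y \to 0$ with $X, Y$ indecomposable (this is the standard description of the Lie bracket in the Hall algebra from Section \ref{subsection: Hall algebras}), so $[\mathfrak{p}, \mathfrak{n}] \subseteq \mathfrak{p}$, and $\mathfrak{p}$ is a Lie ideal.

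For part (2), suppose $X, Y \in \mathcal{P}_Q$ and $0 \to X \to E \to Y \to 0$ is a short exact sequence in $\operatorname{Rep}(Q,\FF_1)_{\nil}$. Then the inequality above gives $\operatorname{rank} H_1(\Gamma_E,\mathbb{Z}) \ge 2$. But if $E$ were indecomposable, then (since $Q$ is a pseudotree and $E$ is nilpotent) $\Gamma_E$ would be a pseudotree, forcing $\operatorname{rank} H_1(\Gamma_E,\mathbb{Z}) \le 1$ — a contradiction. Hence $E$ is decomposable. To conclude $E \cong X \oplus Y$, I would argue that the extension must split: writing $E \cong E_1 \oplus \cdots \oplus E_m$ into indecomposables, each $\Gamma_{E_k}$ is a connected pseudotree, so $\operatorname{rank} H_1(\Gamma_{E_k},\mathbb{Z}) \le 1$; since $\operatorname{rank} H_1(\Gamma_E,\mathbb{Z}) = \sum_k \operatorname{rank} H_1(\Gamma_{E_k},\mathbb{Z}) \ge 2$, at least two summands have $H_1 = \mathbb{Z}$. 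Now by the structure of $\Gamma_E$ from Lemma \ref{lemma: lemma from p1}(3) — in particular that each added arrow goes from $\Gamma_Y$ to $\Gamma_X$, and that $\Gamma_X$ is predecessor-closed while $\Gamma_Y$ is successor-closed in $\Gamma_E$ — I would identify the unique cycle in $\Gamma_X$ and the unique cycle in $\Gamma_Y$; since both survive in $\Gamma_E$ and the newly added arrows cannot create a path connecting them through both directions (they all go one way, from $Y$ to $X$), the connected component of $\Gamma_E$ containing the $\Gamma_Y$-cycle contains no $\Gamma_X$-vertex lying on the $\Gamma_X$-cycle, and vice versa; this forces the new arrows to be "inessential" in the sense that $\Gamma_E = \Gamma_X \sqcup \Gamma_Y$ as a disjoint union with no added arrows, i.e.\ $E \cong X \oplus Y$. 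The "in particular, $\mathfrak{p}$ is abelian" clause is then immediate: the bracket $[[X],[Y]]$ for $X, Y \in \mathcal{P}_Q$ is computed from extensions of $X$ by $Y$ and of $Y$ by $X$, all of which split, so the structure constants all vanish.

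\textbf{Main obstacle.} The delicate point is the very last step of part (2): upgrading "$E$ is decomposable" to "$E \cong X \oplus Y$ exactly". Decomposability alone does not formally give splitting of the given short exact sequence. The cleanest route is probably to use the explicit combinatorial description: the added arrows in Lemma \ref{lemma: lemma from p1}(3) go from $\alpha$-sinks of $\Gamma_Y$ to $\alpha$-sources of $\Gamma_X$; if any such arrow is actually added, then it lies on some path in $\Gamma_E$, and one must check this is incompatible with $\Gamma_E$ decomposing in a way that preserves both homology classes. A careful case analysis — or an appeal to the fact that a pseudotree coefficient quiver has a unique embedded cycle and any added edge joining its $Y$-part to its $X$-part would either merge the two cycles into one component (giving an indecomposable summand with $\operatorname{rank} H_1 \ge 2$, impossible) or be redundant — should close this. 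I would want to state a short preliminary lemma isolating exactly this graph-theoretic fact before invoking it, rather than burying it in the proof.
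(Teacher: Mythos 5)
Your overall route is the same as the paper's: part (1) is Lemma \ref{lemma: lemma from p1}(3) combined with the fact that an indecomposable nilpotent representation of a pseudotree has a pseudotree coefficient quiver, and part (2) is the same first-Betti-number count; the paper's own proof consists of exactly these two observations stated tersely, so in substance your argument matches and is correct. One step in your part (2) is misargued, though: the claim that, because the added arrows all point from $\Gamma_Y$ to $\Gamma_X$, the component of $\Gamma_E$ containing the $\Gamma_Y$-cycle avoids the $\Gamma_X$-cycle is false as reasoned --- direct-sum decompositions correspond to connected components of the \emph{underlying undirected} graph of $\Gamma_E$, so the orientation of the added arrows is irrelevant, and since $\Gamma_X$ and $\Gamma_Y$ are connected, even a single added arrow places both cycles in one component. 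The clean argument is the one you sketch in your closing paragraph: if any arrow is added, the component containing it contains all of $\Gamma_X\sqcup\Gamma_Y$, hence gives an indecomposable nilpotent direct summand whose coefficient quiver has first homology of rank at least $2$, impossible over a pseudotree; so no arrow is added and $E\cong X\oplus Y$ (your alternative branch, in which an added edge is \emph{redundant}, never occurs). This rests on the standard equivalence, worth citing explicitly, that an $\FF_1$-representation is indecomposable exactly when its coefficient quiver is connected; the same point explains why your tacit assumption in part (1) that the middle term $E$ is indecomposable is the right reading of the statement (membership in $\mathcal{P}_Q$ presupposes indecomposability, and only indecomposable middle terms survive in the bracket of primitives).
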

\begin{proof}
	The first assertion directly follows from Lemma \ref{lemma: lemma from p1}. The second assertion is also clear since any indecomposable nilpotent representation of a pseudotree $Q$ is either in $\mathcal{T}_Q$ or $\mathcal{P}_Q$. 
\end{proof}

The quotient $\mathfrak{n}/\mathfrak{p}$ is a Lie algebra with a basis consisting of the cosets of elements in $\mathcal{T}_Q$.  By Proposition \ref{p.reflection}, there is a bijection $r : \mathcal{T}_Q \rightarrow \mathcal{T}_{Q'}$ for any quiver $Q'$ with underlying graph $\overline{Q}$, and hence the underlying vector space of $\mathfrak{n}/\mathfrak{p}$ only depends on the underlying graph $P:=\overline{Q}$. It will be convenient to write $\mathfrak{q}_P:= \mathfrak{n}/\mathfrak{p}$ to denote the underlying vector space of $\mathfrak{n}/\mathfrak{p}$. The Lie bracket on $\mathfrak{n}/\mathfrak{p}$ is then thought of as a linear map $[\cdot,\cdot]_Q : \mathfrak{q}_P \otimes \mathfrak{q}_P \rightarrow \mathfrak{q}_P$. 

\begin{mydef}
	Let $Q$ be a proper pseudotree and $C$ be the central cycle of $Q$. A representation $T \in \mathcal{T}_Q$ is called a \emph{spine} representation if $c_T^{-1}(C_0) \neq \emptyset$, where $c_T:\Gamma_T\to Q$ is a associated winding and $C_0$ is the set of vertices of $C$. Otherwise, $T$ is called a \emph{branch} representation. 
\end{mydef}


If $T,S \in \mathcal{T}_Q$, then the non-split short exact sequences 
\[ 
0 \rightarrow S \rightarrow E \rightarrow T \rightarrow 0
\] 
turn out to be quite constrained. Note that the following result can be sharpened by placing additional assumptions on the tree representations involved, but in its present form, it is strong enough for the purposes of this article. For this result, we write $Q$ as
\begin{equation}\label{eq: branches}
Q = \begin{tikzcd}[row sep =0.6em, column sep =0.5em]
	& T_1 \arrow[d,no head, dashed] & T_2 \arrow[d,no head, dashed] &  &  \\
	T_n \arrow[d,no head, dashed] & \bullet \arrow[dl, no head] & \bullet \arrow[l, no head] & \cdots \arrow[l,no head]& T_k \arrow[d,no head, dashed] \\
	\bullet \arrow[d, no head] &  &  &  & \bullet \arrow[ul, no head]\\
	\bullet \arrow[dr, no head] &  &  &  & \bullet \arrow[u, no head] \\
	T_{n-1} \arrow[u,no head, dashed] & \bullet \arrow[r, no head] & \cdots  \arrow[r, no head] & \bullet \arrow[ur, no head] & T_{k+1} \arrow[u,no head, dashed]  \\
	& T_{n-2} \arrow[u,no head, dashed]  & & T_{k+2} \arrow[u,no head, dashed]   \\
\end{tikzcd}     
\end{equation} 
where the orientation of each arrow can be arbitrary, and where each $T_i$ is a rooted tree.

\begin{lem}\label{l: tree ext}
	Let $Q$ be a pseudotree with central cycle $C$, and $S, T \in \mathcal{T}_Q$. Then the following hold: 
	\begin{enumerate} 
		\item If either $S$ or $T$ is a branch representation, then there are at most $\max\{ 1, |c_S^{-1}(C_0)|, |c_T^{-1}(C_0)|\}$ non-split short exact sequences $0 \rightarrow S \rightarrow E \rightarrow T \rightarrow 0$. In the case that such a sequence exists, $E \in \mathcal{T}_Q$ and any short exact sequence $0 \rightarrow T \rightarrow E' \rightarrow S \rightarrow 0$ splits. 
		\item If $S$ and $T$ are both spine representations, then there are either zero, one, or three non-split short exact sequences $0 \rightarrow S \rightarrow E \rightarrow T \rightarrow 0$. If there is exactly one non-split sequence, then $E \in \mathcal{T}_Q$ and there is at most one non-split short exact sequence $0 \rightarrow T \rightarrow E' \rightarrow S \rightarrow 0$. If there are exactly three such sequences, $E \in \mathcal{T}_Q$ for two sequences, $E \in \mathcal{P}_Q$ for the third, and any short exact sequence $0 \rightarrow T \rightarrow E' \rightarrow S \rightarrow 0$ splits. 
		\item In $\mathfrak{n}/\mathfrak{p}$, $[S,T]$ can be written as a sum of at most $\max\{ 2, |c_S^{-1}(C_0)|, |c_T^{-1}(C_0)| \}$ elements of $\mathcal{T}_Q$ with coefficients in $\mathbb{Z}$. 
	\end{enumerate}
\end{lem}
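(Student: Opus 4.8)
The plan is to analyze the non-split short exact sequences $0 \to S \to E \to T \to 0$ combinatorially using Lemma~\ref{lemma: lemma from p1}(3): such $E$ is obtained from $\Gamma_S \sqcup \Gamma_T$ by adding $\alpha$-colored arrows from $\alpha$-sinks of $\Gamma_T$ to $\alpha$-sources of $\Gamma_S$, one arrow per color $\alpha \in Q_1$ that is used. The key point is that since $S, T$ are tree representations, the only way to create a cycle (and hence land in $\mathcal{P}_Q$ rather than $\mathcal{T}_Q$) is to add arrows lying over the central cycle $C$; and for the winding condition to be respected while keeping $E$ indecomposable, the number of arrows we can add is severely limited by how $\Gamma_S$ and $\Gamma_T$ sit over $C$. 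First I would establish the branch case (part (1)): if say $T$ is a branch representation, then $c_T^{-1}(C_0) = \emptyset$, so no added arrow over $C$ can have its source in $\Gamma_T$-part issues — more precisely, any arrow added in forming $E$ goes from an $\alpha$-sink in $\Gamma_T$ to an $\alpha$-source in $\Gamma_S$, and since at most one new arrow per color can be added and indecomposability forces the result to be connected, I would count: a nonsplit extension corresponds to a choice of a single new arrow (color $\alpha$, plus endpoints), and the endpoint in the branch side is forced, so the count is bounded by $\max\{1, |c_S^{-1}(C_0)|, |c_T^{-1}(C_0)|\}$; connectivity together with $\Gamma_S, \Gamma_T$ being trees and no $C$-arrow being addable forces $H_1(\Gamma_E,\mathbb{Z}) = 0$, i.e. $E \in \mathcal{T}_Q$, and the reverse sequence $0 \to T \to E' \to S \to 0$ splits because the required gluing arrow (from a sink of $\Gamma_S$ to a source of $\Gamma_T$) cannot exist once the forward one does, by a source/sink parity argument along the unique path.

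Next I would handle part (2), the case where both $S$ and $T$ are spine representations. Here $c_S^{-1}(C)$ and $c_T^{-1}(C)$ are each a (possibly single-vertex) oriented subpath of the cycle, i.e. the "spines" of $S$ and $T$. Forming a connected indecomposable $E$ requires adding exactly one arrow gluing $\Gamma_T$ to $\Gamma_S$; the candidates are: (a) a non-$C$ arrow, of which there is at most one compatible choice in a pseudotree with the spines fixed (since off-cycle branches attach tree-like), giving $E \in \mathcal{T}_Q$; or (b) a $C$-colored arrow $\alpha$. If a $C$-arrow $\alpha$ glues the finish of $T$'s spine to the start of $S$'s spine, we get one tree extension; there may be a second $C$-arrow option gluing in the "other direction" around the cycle (again a tree); and if the two spines of $S$ and $T$ together already cover all of $C$ except one arrow, then adding that last arrow closes the cycle and produces $E \in \mathcal{P}_Q$. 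This is exactly the trichotomy: zero, one, or three nonsplit sequences, with the "three" case being two tree extensions plus one pseudotree extension. I would make the "at most three" bound rigorous by noting that the winding condition allows at most one new arrow of each color, that all but at most three colors are excluded by source/sink considerations, and that in the three-arrow case the two $\Gamma_E \in \mathcal{T}_Q$ extensions force the reverse extension to split (a cycle-closing obstruction). The claim about at most one reverse sequence in the "exactly one" case is the same source/sink parity argument as in part~(1).

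Finally, part (3) is a bookkeeping consequence: in $\mathfrak{n}/\mathfrak{p}$, the Lie bracket $[S,T]$ is computed in the Hall algebra as $[S,T] = \sum_E (\text{number of extensions } 0\to S\to E\to T\to 0)\,[E] - \sum_{E'} (\text{number of extensions } 0\to T\to E'\to S\to 0)\,[E']$, and then we project modulo $\mathfrak{p}$, killing the $\mathcal{P}_Q$ terms. By parts (1) and (2), the total number of $\mathcal{T}_Q$-extensions in both directions is at most $2$ when both are spine (one forward tree in the "exactly one" case contributes $1$, or two forward trees in the "three" case contribute $2$, with the reverse splitting), and at most $\max\{1,|c_S^{-1}(C_0)|,|c_T^{-1}(C_0)|\} + (\text{reverse, which is }0)$ in the branch case; taking the max with $2$ to absorb all cases uniformly gives the bound $\max\{2, |c_S^{-1}(C_0)|, |c_T^{-1}(C_0)|\}$, with coefficients lying in $\mathbb{Z}$ (in fact each $[E]$ appears with coefficient $\pm$(an extension count), but after the mod-$\mathfrak{p}$ projection the relevant coefficients are small integers). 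The main obstacle I expect is part~(2): carefully enumerating the $C$-colored gluing arrows and proving that exactly the trichotomy zero/one/three occurs — in particular ruling out "two" and bounding by "three" — which requires a precise case analysis of how the two oriented spine-subpaths of $C$ can be positioned relative to each other (disjoint, overlapping, or jointly covering all but one edge of $C$) and checking the winding axiom in each configuration. Everything else reduces to the structural description of $\Gamma_E$ from Lemma~\ref{lemma: lemma from p1} plus elementary graph-theoretic arguments about trees and a single cycle.
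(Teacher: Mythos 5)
Your overall strategy (reduce everything to Lemma \ref{lemma: lemma from p1}(3) and bound the number of arrows that can be added to $\Gamma_S \sqcup \Gamma_T$ using the fact that $\Gamma_E$ must again be a pseudotree) is the same as the paper's, and your treatment of part (1) and the bookkeeping for part (3) are in the right spirit. However, part (2) contains a genuine error. You assert that ``forming a connected indecomposable $E$ requires adding exactly one arrow gluing $\Gamma_T$ to $\Gamma_S$'' and that the extension with $E \in \mathcal{P}_Q$ arises when a single $C$-colored arrow ``closes the cycle.'' This cannot happen: $\Gamma_S$ and $\Gamma_T$ sit inside $\Gamma_E$ as disjoint full subquivers on complementary vertex sets, so $\Gamma_S\sqcup\Gamma_T$ is a forest with two components, and adding exactly one arrow always yields a tree, never a quiver with $H_1\cong\mathbb{Z}$. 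The pseudotree extension occurs precisely when \emph{two} arrows can be added simultaneously, and the $0/1/3$ trichotomy comes from the admissible nonempty subsets of a two-element set of addable arrows: either arrow alone gives the two tree extensions, both together give the unique extension in $\mathcal{P}_Q$. This is exactly how the paper argues; your enumeration (``two single $C$-arrows plus one cycle-closing arrow'') both misidentifies which $E$ lies in $\mathcal{P}_Q$ and makes your splitting argument for the reverse sequences in the ``three'' case rest on the wrong configuration, and it would also fail to explain why the count cannot be, say, two.

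A smaller but related gap: in part (1) your justification that only one arrow can ever be added (``at most one new arrow per color'' plus connectivity) does not rule out adding two arrows of different colors. The reason, implicit in the paper, is that two added arrows would create a cycle in $\Gamma_E$ passing through the branch-colored added arrows, whereas any cycle in the coefficient quiver of a nilpotent indecomposable representation of a pseudotree must lie over the central cycle $C$ (the winding condition forces images of paths to be non-backtracking, and a non-backtracking closed walk in $\overline{Q}$ cannot use branch edges). You gesture at this in your opening paragraph, but it is the step that must carry the weight; once it is made explicit, part (1) goes through, part (2) should be redone with the two-arrow mechanism above, and part (3) then follows by the bookkeeping you describe.
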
 

\begin{proof} 
	Let $0 \rightarrow S \rightarrow E \rightarrow T \rightarrow 0$ be a non-split short exact sequence. Then $\Gamma_E$ is obtained from the disjoint union $\Gamma_S \sqcup \Gamma_T$ by adding certain $\alpha$-colored arrows from $\alpha$-sinks of $\Gamma_T$ to $\alpha$-sources of $\Gamma_S$, where $\alpha \in Q_1$. Since $\Gamma_E$ is a not-necessarily proper pseudotree, at most $2$ arrows can be added from $\Gamma_T$ to $\Gamma_S$.  
	
	\noindent (1): If either $S$ or $T$ is a branch representation, then from Figure \eqref{eq: branches} one can observe that any arrow connecting $\Gamma_T$ to $\Gamma_S$ must necessarily lie in $c_E^{-1}(Q_1\setminus C_1)$. Hence, the connecting arrow must lie in $T_i$ for a unique $i$, and connect one vertex of $T_i$ to another. In particular, if there exists a non-split short exact sequence $0 \rightarrow S \rightarrow E \rightarrow T \rightarrow 0$, then $E$ is obtained by adding exactly one arrow from $\Gamma_T$ to $\Gamma_S$. In this case, there will be no arrow from $\Gamma_S$ to $\Gamma_T$, and so any short exact sequence $0 \rightarrow T \rightarrow E' \rightarrow S \rightarrow 0$ will split. Now, since only one arrow is added to $\Gamma_S \sqcup \Gamma_T$, $\Gamma_E \in \mathcal{T}_Q$, and the number of non-split short exact sequences is at most $\max\{ 1, c_S^{-1}(C_0), c_T^{-1}(C_0)\}$. This proves the first assertion. 
	
	\noindent(2): Suppose that $S$ and $T$ are both spine representations. Since at most two arrows may be added, there are at most $3$ ways to add these arrows which result in a connected quiver $\Gamma_E$. Furthermore, the only time when there are exactly three non-split short exact sequences $0 \rightarrow S \rightarrow E \rightarrow T \rightarrow 0$ is when two arrows may be added from $\Gamma_T$ to $\Gamma_S$. Adding both arrows results in a quiver which is a proper pseudotree, whereas adding only one arrow results in an element in $\mathcal{T}_Q$. In this case there is no way to add an arrow from $\Gamma_S$ to $\Gamma_T$, and so any short exact sequence $0 \rightarrow T \rightarrow E' \rightarrow S \rightarrow 0$ will split. Otherwise only one arrow may be added, in which case the resulting representation will be in $\mathcal{T}_Q$. This proves the second assertion. 
	
	\noindent (3): The last assertion follows from $(1)$ and $(2)$.
\end{proof}  

In general, the task of explicitly computing products $[N]\cdot[M] \in H_{Q,\nil}$ or commutators $[[N],[M]] \in\mathfrak{n}_{Q,\nil}$ is expected to be challenging. It is not even clear how many non-zero terms such expressions will have, when expanded as a linear combination of $\mathbb{F}_1$-representations of $Q$. The results in this section give us some insight into the behavior of commutators, at least in the case that $Q$ is a not-necessarily-proper pseudotree. 

\begin{cor} 
Let $Q$ be a not-necessarily proper pseudotree, and let $M$ and $N$ be indecomposable nilpotent $\mathbb{F}_1$-representations of $Q$. Then the following hold. 
\begin{enumerate} 
\item If $Q$ is a tree, then there exists a natural number $K$ such that $\max\{\dim(M),\dim(N) \} \geq K$ implies $[[N],[M]] = 0$.
\item If $Q$ is of type $\tilde{\mathbb{A}}$, then there exist natural numbers $K, B$ such that $\max\{\dim(M),\dim(N) \} \geq K$ implies $[[N],[M]]$ is a $\mathbb{Z}$-linear combination of at most $B$ indecomposable nilpotent representations. 
\item If $Q$ is a proper pseudotree, then there exists a natural number $K$ such that \\
 $\max\{\dim(M),\dim(N) \} \geq K$ implies $[[N],[M]]$ is a $\mathbb{Z}$-linear combination of at most \\
 $\max\{\dim(M),\dim(N) \}$ indecomposable nilpotent representations. 
\end{enumerate}
\end{cor}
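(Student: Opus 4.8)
The plan is to first make the commutator explicit in the Hall algebra and then estimate the number of terms case by case. For indecomposable nilpotent $M$ and $N$, write $g^R_{X,Y}$ for the number of short exact sequences $0 \to Y \to R \to X \to 0$ in $\operatorname{Rep}(Q,\FF_1)_{\nil}$; then the Hall product formula gives $[N]\cdot[M]-[M]\cdot[N]=\sum_R (g^R_{N,M}-g^R_{M,N})[R]$, the sum running over all isomorphism classes of nilpotent representations. This element is primitive, hence lies in $\mathfrak{n}$, and since the delta functions $\{[R]\}_R$ form a basis of $H_{Q,\nil}$ while $\mathfrak{n}$ is spanned by the classes of indecomposables, the coefficients indexed by decomposable $R$ must vanish. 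Therefore $[[N],[M]]=\sum_{R \text{ indecomposable}}(g^R_{N,M}-g^R_{M,N})[R]$ with integer coefficients, and it suffices to bound the number of indecomposable $R$ admitting a non-split extension of $M$ by $N$ or of $N$ by $M$. Any such $R$ has $\dim R=\dim M+\dim N$, and by Lemma \ref{lemma: lemma from p1} its coefficient quiver $\Gamma_R$ is built from the disjoint union of the connected quivers $\Gamma_M$ and $\Gamma_N$ by adjoining $k\geq 1$ arrows; a standard count of first Betti numbers yields $\operatorname{rank}H_1(\Gamma_R)=\operatorname{rank}H_1(\Gamma_M)+\operatorname{rank}H_1(\Gamma_N)+(k-1)$, which is at most $1$ because $Q$ is a pseudotree.

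Parts (1) and (2) are then immediate. If $Q$ is a tree it has finite representation type over $\FF_1$, so $\operatorname{Rep}(Q,\FF_1)_{\nil}$ has only finitely many indecomposables; taking $K$ larger than all of their dimensions makes the hypothesis $\max\{\dim M,\dim N\}\geq K$ vacuous, so (1) holds. If $Q$ is of type $\tilde{\mathbb{A}}$ it has bounded representation type, so there are $B,N_0$ with $\NI_Q(n)\leq B$ for all $n\geq N_0$; since any contributing $R$ satisfies $\dim R=\dim M+\dim N\geq \max\{\dim M,\dim N\}$, taking $K:=N_0$ forces $\dim R\geq N_0$, whence at most $B$ indecomposables can occur, proving (2).

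For (3), let $C$ be the central cycle of $Q$ and recall that every indecomposable object of $\operatorname{Rep}(Q,\FF_1)_{\nil}$ lies in $\mathcal{T}_Q$ or $\mathcal{P}_Q$. If $M,N\in\mathcal{P}_Q$ then $\mathfrak{p}$ is abelian by Lemma \ref{l: ideal}(2), so $[[N],[M]]=0$ (this is also forced by the Betti identity, which gives $\operatorname{rank}H_1(\Gamma_R)\geq 2$ for any non-split extension). If $M\in\mathcal{P}_Q$ and $N\in\mathcal{T}_Q$ (or symmetrically), then $[[N],[M]]\in\mathfrak{p}$ by Lemma \ref{l: ideal}(1); here $\operatorname{rank}H_1(\Gamma_M)=1$ and $\operatorname{rank}H_1(\Gamma_N)=0$, so the Betti identity forces $k=1$, i.e.\ every contributing $R$ arises by adjoining a single arrow to $\Gamma_M\sqcup\Gamma_N$, and a count of such gluings in the spirit of Lemma \ref{l: tree ext} bounds the number of resulting isomorphism classes by $\max\{\dim M,\dim N\}$. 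Finally, if $M,N\in\mathcal{T}_Q$, then Lemma \ref{l: tree ext}(1)--(2), applied to non-split extensions in both directions, bounds the total number of distinct middle terms (whether in $\mathcal{T}_Q$ or $\mathcal{P}_Q$) by $\max\{3,|c_M^{-1}(C_0)|,|c_N^{-1}(C_0)|\}\leq\max\{3,\dim M,\dim N\}$. Taking $K$ to be the largest of the constants produced above together with $3$, we conclude that in every case $[[N],[M]]$ is a $\mathbb{Z}$-linear combination of at most $\max\{\dim M,\dim N\}$ indecomposable nilpotent representations whenever $\max\{\dim M,\dim N\}\geq K$.

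The main obstacle is the mixed case $M\in\mathcal{P}_Q$, $N\in\mathcal{T}_Q$: one must show that a tree representation can be glued onto a proper pseudotree representation along a single arrow in at most $\max\{\dim M,\dim N\}$ essentially different ways. This requires an analogue of Lemma \ref{l: tree ext} in which one of the two representations already carries the central cycle; as in that lemma, the argument is a classification of the arrows that may be added to a disjoint union of coefficient quivers so as to keep the result a connected, acyclic pseudotree, but the presence of the cycle inside $\Gamma_M$ and the possibility of many $\alpha$-sinks and $\alpha$-sources lying over a high-degree vertex of $Q$ make this bookkeeping the genuinely technical step. A minor additional point is that the additive constants appearing above (the ``$3$'', and analogous small terms) must be absorbed into the threshold $K$, which is harmless because the bound is only asserted once $\max\{\dim M,\dim N\}$ is large.
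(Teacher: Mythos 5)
Your treatment of (1), (2), the case $M,N\in\mathcal{P}_Q$, and the case $M,N\in\mathcal{T}_Q$ is sound and matches the paper's route (finite type for trees, bounded type for type $\tilde{\mathbb{A}}$, Lemma \ref{l: ideal}(2), and Lemma \ref{l: tree ext}(3) together with $|c_M^{-1}(C_0)|\le\dim M$). However, the mixed case $M\in\mathcal{T}_Q$, $N\in\mathcal{P}_Q$ is a genuine gap, and you say so yourself: you assert that the single-arrow gluings are bounded by $\max\{\dim M,\dim N\}$ ``in the spirit of Lemma \ref{l: tree ext}'' and then concede in your closing paragraph that the needed analogue of that lemma --- for a gluing in which one of the two coefficient quivers already carries the central cycle --- is the unproven technical step. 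As written, part (3) is therefore not established: Lemma \ref{l: tree ext} is stated only for $S,T\in\mathcal{T}_Q$, and the Betti-number argument only tells you that exactly one arrow is adjoined, not how many isomorphism classes of middle terms can arise, which is precisely the quantity to be bounded.

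The paper closes this gap not by reproving a count but by a reduction that you missed: if $N\in\mathcal{P}_Q$, then $c_N^{-1}(\alpha)\neq\emptyset$ for every $\alpha\in C_1$; fixing one such $\alpha$ and deleting a single $\alpha$-colored arrow from $\Gamma_N$ produces a tree representation $N'\in\mathcal{T}_Q$ for which $[[N'],[M]]$ and $[[N],[M]]$ have the same number of non-zero terms. This reduces the mixed case to the tree--tree case, where Lemma \ref{l: tree ext}(3) applies verbatim, and the bound $\max\{3,|c_M^{-1}(C_0)|,|c_N^{-1}(C_0)|\}\le\max\{\dim M,\dim N\}$ (for the dimensions large) finishes the argument. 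To repair your proof you would either have to supply this reduction (including the verification that deleting the cycle arrow preserves the number of non-zero commutator terms) or actually prove the mixed-gluing analogue of Lemma \ref{l: tree ext} that you only sketch; in its current form the proposal leaves the central case of statement (3) unproved.
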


\begin{proof} 
Assertion (1) follows from the fact that if $Q$ is a tree, then $Q$ is of finite representation type over $\mathbb{F}_1$. Similarly, assertion (2) follows from the fact that if $Q$ is of type $\tilde{\mathbb{A}}$, then $Q$ is of bounded representation type over $\mathbb{F}_1$. See \cite{jun2020quiver} for more details. For assertion (3), first note that Lemma \ref{l: ideal}(2) implies that we may assume without loss of generality $M \in \mathcal{T}_Q$. If $N \in \mathcal{P}_Q$, then $c_N^{-1}(\alpha ) \neq \emptyset$ for any $\alpha \in C_1$, where $C$ is the central cycle of $Q$. Fixing an arrow $\alpha \in C_1$ and deleting exactly one $\alpha$-colored arrow from $\Gamma_N$, we obtain a representation $N' \in \mathcal{T}_Q$ such that $[[N'],[M]]$ and $[[N],[M]]$ have the same number of non-zero terms. Hence, we may also assume without loss of generality that $N \in \mathcal{T}_Q$. Then by Lemma \ref{l: tree ext}(3) $[[N],[M]]$ is a sum of at most $\max\{3,|c_M^{-1}(C_0)|, |c_N^{-1}(C_0)|\}$ nilpotent indecomposable representations, and the result follows.
\end{proof}

In light of the above result, it is natural to pose the following question.
\begin{question}   
	Let $Q$ be a connected quiver, with $M$ and $N$ indecomposable nilpotent $\mathbb{F}_1$-representations. What is an upper bound for $|\operatorname{supp}([M,N])|$? 
\end{question} 

The question above is posed with the understanding that nilpotent indecomposable representations form a basis for $\mathfrak{n}$. In particular, they generate $\mathfrak{n}$: as the following result demonstrates, however, they are not necessarily a minimal generating set. The theorem below proves some fundamental structural results about the Lie algebras $\mathfrak{n}$ and $\mathfrak{n}/\mathfrak{p}$. This result directly extends \cite[Propositions 6.3, 6.7]{jun2021coefficient} to the case of proper pseudotrees.

\begin{mythm} \label{theorem: Lie main thm}
	Let $Q$ be a not-necessarily proper pseudotree. Let $\mathcal{T}_Q$, $\mathcal{P}_Q$, $\mathfrak{n}$ and $\mathfrak{n}/\mathfrak{p}$ be as defined above. Then the following hold: 
	\begin{enumerate} 
		\item As a Lie algebra, $\mathfrak{n}$ is generated by $\mathcal{T}_Q$. 
		\item As a Lie algebra, the isomorphism class of $\mathfrak{n}/\mathfrak{p}$ does not depend on the orientation of $Q$.
	\end{enumerate}
\end{mythm}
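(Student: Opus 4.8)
The plan is to establish (1) and (2) separately, in both cases combining Lemma~\ref{l: tree ext} with the Hall-algebra description of the bracket on $\mathfrak n$, so that $[[S],[T]]=[S]\cdot[T]-[T]\cdot[S]$ is computed via subrepresentation counts as in Section~\ref{subsection: Hall algebras}, using that this commutator is primitive and hence a $\ZZ$-combination of indecomposables.

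For (1): since $\mathfrak n$ has basis $\mathcal T_Q\sqcup\mathcal P_Q$, it is enough to show that each $[E]$ with $E\in\mathcal P_Q$ lies in the Lie subalgebra $\mathfrak g$ generated by $\mathcal T_Q$, and we may assume the central cycle $C$ of $Q$ is acyclic (otherwise $\mathcal P_Q=\emptyset$). For $E\in\mathcal P_Q$ the coefficient quiver $\Gamma_E$ is a pseudotree with $H_1(\Gamma_E,\ZZ)=\ZZ$, so it is a single undirected cycle $Z$ with rooted trees attached along it; a short winding argument (no two arrows of $Z$ may map to a common $Q$-arrow with a shared endpoint, so $c_E(Z)$ does not backtrack and hence cannot detour off $C$) shows that $c_E$ maps $Z$ onto $C$. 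Choose $b\in Z$ with both incident $Z$-arrows pointing into $b$ and $c_E(b)$ a sink of $C$ (possible because $C$ is acyclic), let $S$ be the subrepresentation of $E$ supported on $b$ together with the trees of $\Gamma_E$ rooted at $b$, and put $T=E/S$. Then $S,T\in\mathcal T_Q$ are both spine representations, and $\Gamma_E$ is obtained from $\Gamma_S\sqcup\Gamma_T$ by adjoining the two arrows into $b$, both directed from $\Gamma_T$ to $\Gamma_S$, so $0\to S\to E\to T\to 0$ is an instance of the ``three non-split sequences'' case of Lemma~\ref{l: tree ext}(2). In that case $[S]\cdot[T]$ is supported on the decomposable class $[S\oplus T]$, while $[T]\cdot[S]=c_0[S\oplus T]+c_1[E_1]+c_2[E_2]+c_3[E]$ with $E_1,E_2\in\mathcal T_Q$ and $c_1,c_2,c_3\in\ZZ_{>0}$. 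Since $[[S],[T]]$ is primitive the $[S\oplus T]$-terms cancel, so $[[S],[T]]=-c_1[E_1]-c_2[E_2]-c_3[E]$ and hence $[E]=-c_3^{-1}\bigl([[S],[T]]+c_1[E_1]+c_2[E_2]\bigr)\in\mathfrak g$.

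For (2): it suffices to build a Lie isomorphism $\mathfrak n_Q/\mathfrak p_Q\xrightarrow{\ \sim\ }\mathfrak n_{r_\alpha Q}/\mathfrak p_{r_\alpha Q}$ for each $\alpha\in Q_1$, since any two orientations of $\overline Q$ are related by a finite sequence of arrow reversals. The bijection $r_\alpha\colon\mathcal T_Q\to\mathcal T_{r_\alpha Q}$ of Proposition~\ref{p.reflection} does \emph{not} extend to a Lie map, because reversing $\alpha$ interchanges the sub- and quotient roles in every extension whose connecting arrow is $\alpha$-coloured, which flips a sign. The remedy is to twist: set $\phi_\alpha([T]):=(-1)^{|c_T^{-1}(\alpha)|}[r_\alpha T]$ for $T\in\mathcal T_Q$ and extend linearly; this is a linear isomorphism, and I claim it is a Lie map. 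Indeed, by Lemma~\ref{lemma: lemma from p1}(3) together with an edge count in trees, the indecomposable terms of $[S]\cdot[T]-[T]\cdot[S]$ lying in $\mathcal T_Q$ are exactly the tree representations obtained by gluing $\Gamma_S$ and $\Gamma_T$ along a single arrow, with integer coefficients given by subrepresentation counts; the $\mathcal P_Q$-terms (coming from two-arrow gluings) die modulo $\mathfrak p$ on both sides and are irrelevant. Reversing $\alpha$ gives a bijection on such single-arrow gluings: gluings along a non-$\alpha$ arrow are untouched and satisfy $|c_E^{-1}(\alpha)|=|c_S^{-1}(\alpha)|+|c_T^{-1}(\alpha)|$, while gluings along an $\alpha$-arrow have $|c_E^{-1}(\alpha)|=|c_S^{-1}(\alpha)|+|c_T^{-1}(\alpha)|+1$ and exchange which of $\Gamma_S,\Gamma_T$ is the sub. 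In both cases the extra factor in $\phi_\alpha$ exactly cancels the sign introduced by the interchange, and the subrepresentation counts match, so $\phi_\alpha[[S],[T]]_Q=[\phi_\alpha[S],\phi_\alpha[T]]_{r_\alpha Q}$; thus $\phi_\alpha$ is a Lie isomorphism and (2) follows.

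The main obstacle is the sign twist in (2): one must recognize that $r_\alpha$ flips sub/quotient roles precisely along $\alpha$-coloured connecting arrows, and then carefully verify the parity identity $|c_E^{-1}(\alpha)|\equiv|c_S^{-1}(\alpha)|+|c_T^{-1}(\alpha)|+\varepsilon$ in tandem with the compatibility of the subrepresentation counts under reversal. In (1), the only subtle step is the structural description of $\Gamma_E$ for $E\in\mathcal P_Q$ and the verification that the ``two arrows into a sink of $Z$'' splitting really lands in the three-sequence case of Lemma~\ref{l: tree ext}(2); this lies close to what is already established in \cite{jun2021coefficient}.
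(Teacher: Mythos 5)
Your proposal is correct and takes essentially the same route as the paper: for (1) you cut $\Gamma_E$ at a sink lying over a sink of the acyclic central cycle and recover $[E]$ from the commutator of the two resulting tree representations via the extension analysis of Lemma~\ref{l: tree ext} (the paper cuts off only the single vertex over the sink, while you include its branch trees and divide by the Hall coefficient --- a harmless, in fact slightly more careful, variant), and for (2) you use exactly the paper's sign-twisted map $S \mapsto (-1)^{|c_S^{-1}(\alpha)|}\,r_{\alpha}S$ with the same two-case parity verification for gluing arrows with $\beta \neq \alpha$ and $\beta = \alpha$.
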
 
\begin{proof} 
We first prove assertion (1). If $Q$ is a tree, then there is nothing to show. Suppose that $Q$ is a proper pseudotree and $C$ is the central cycle of $Q$. If $C$ is equioriented then $\mathcal{P}_Q = \emptyset$ and there is nothing to show.
	
Suppose that $C$ is acyclic. Then $C$ must contain a sink $v \in C_0$, which is the target of two distinct arrows $\alpha, \beta \in C_1$. Let $M \in \mathcal{P}_Q$. Then $c_M^{-1}(x) \neq \emptyset$ for all $x \in C_0$. In particular, there exists a vertex $u \in c_M^{-1}(v)$. Let $\tilde{\alpha}$ and $\tilde{\beta}$ be the arrows in $c_M^{-1}(C_1)$ which are adjacent to $u$. Then the quiver $\Gamma$ obtained from $\Gamma_M$ by deleting $\tilde{\alpha}$, $\tilde{\beta}$ and $u$ is the coefficient quiver of a tree representation $T \in \mathcal{T}_Q$. Let $T' \in \mathcal{T}_Q$ be the representation whose coefficient quiver is the full subquiver of $\Gamma_M$ with vertex set $(\Gamma_M)_0 \setminus T_0$. Then  
	\[ 
	[T]\cdot [T'] = [S_v \oplus T] + [T\cup\tilde{\alpha}] + [T\cup\tilde{\beta}] + [M],
	\] 
	where $T\cup \tilde{\alpha}$ (resp.~$T\cup \tilde{\beta}$) is the coefficient quiver obtained by gluing back $\tilde{\alpha}$ (resp.~$\tilde{\beta}$) to $T$. We also have the following. 
	\[  
	[T']\cdot [T] = [T' \oplus T].
	\] 
	It follows that in $\mathfrak{n}$, 
	\[ 
	[[T],[T']] = [T\cup\tilde{\alpha}] + [T\cup\tilde{\beta}] + [M].
	\] 
	Hence, we have that $[M] = [[T],[T']] - [T\cup\tilde{\alpha}] - [T\cup\tilde{\beta}]$, from which the first claim follows. 
	
	For the second claim, pick an arrow $\alpha \in Q_1$. Let $Q' = r_{\alpha}Q$, the quiver obtained from $Q$ by reversing the orientation of $\alpha$. It suffices to show that there exists a vector space isomorphism $\phi : \mathfrak{q}_P \rightarrow \mathfrak{q}_P$ satisfying 
	\begin{equation}\label{eq: lie alg}
	[\phi (S), \phi (T)]_{Q'} = \phi\left([ S,T]_Q \right)
	\end{equation}
	for all $S,T \in \mathcal{T}_Q$. For each $S \in \mathcal{T}_Q$, we set $\epsilon(S) = (-1)^{|c_S^{-1}(\alpha)|}$. This extends uniquely to define a linear map $\epsilon : \mathfrak{q}_P \rightarrow \mathbb{C}$. We claim that the linear map $\phi : \mathfrak{q}_P \rightarrow \mathfrak{q}_P$ defined by 
	\[ 
	\phi(S) = \epsilon(S)S,\quad \forall~S\in\mathcal{T}_Q
	\]  
	 is a vector space isomorphism satisfying \eqref{eq: lie alg}. To see this, pick an arrow $\beta \in Q_1$ and define 
	\begin{equation}\label{eq: vector iso}
	(\cdot )\xrightarrow[\beta]{Q} (\cdot ) : \mathfrak{q}_P\otimes \mathfrak{q}_P \rightarrow \mathfrak{q}_P
	\end{equation}
	as follows: for $S, T \in \mathcal{T}_Q$, we define $S\xrightarrow[\beta]{Q}T$ to be the sum of all coefficient quivers obtained by adding a $\beta$-colored arrow from $\Gamma_S$ to $\Gamma_T$. In particular, $S\xrightarrow[\beta]{Q}T = 0$ if no such coefficient quivers exist. 
	Then we may write 
	\begin{equation}\label{eq: lie bracket}
	[S,T]_Q = \sum_{\beta \in Q_1}{\left(S\xrightarrow[\beta]{Q}T - T\xrightarrow[\beta]{Q}S \right)}.
	\end{equation}  
	We now compute 
	\begin{align*}
		\phi\left([S,T]_Q \right) & = \phi \left(  \sum_{\beta \in Q_1}{\left(S\xrightarrow[\beta]{Q}T - T\xrightarrow[\beta]{Q}S \right)} \right) \\ 
		& = \sum_{\beta \in Q_1}{\left[ \epsilon\left(S\xrightarrow[\beta]{Q}T \right)S\xrightarrow[\beta]{Q}T - \epsilon\left(T\xrightarrow[\beta]{Q}S  \right) T\xrightarrow[\beta]{Q}S \right]}
	\end{align*} 
	and 
	\begin{align*} 
		[\phi(S),\phi(T)]_{Q'} & = \epsilon(S)\epsilon(T)[S,T]_{Q'} \\
		&= \sum_{\beta \in Q'_1}{\left(\epsilon(S)\epsilon(T)S\xrightarrow[\beta]{Q'}T - \epsilon(S)\epsilon(T)T\xrightarrow[\beta]{Q'}S \right)},
	\end{align*}  
	where we understand $\phi(S)$ and $\phi(T)$ as representations of $Q'$ by reversing the orientation of each $\alpha$-colored arrows in $\Gamma_S$ and $\Gamma_T$. We will be done if we can show the following
	\begin{equation}\label{eq:123}
		\epsilon\left(S\xrightarrow[\beta]{Q}T \right)S\xrightarrow[\beta]{Q}T - \epsilon\left(T\xrightarrow[\beta]{Q}S  \right) T\xrightarrow[\beta]{Q}S = \epsilon(S)\epsilon(T)S\xrightarrow[\beta]{Q'}T - \epsilon(S)\epsilon(T)T\xrightarrow[\beta]{Q'}S
	\end{equation}
	for all $\beta \in Q_1$. There are two cases.
	
	\noindent \underline{Case 1:} Suppose that $\beta \neq \alpha$ and $S\xrightarrow[\beta]{Q}T=\sum M_i$. Then for each $M_i$, we have
	\[
	\varepsilon(M_i)=(-1)^{|c_{M_i}^{-1}(\alpha)|}=(-1)^{|c_{S}^{-1}(\alpha)|+|c_{T}^{-1}(\alpha)|}=\varepsilon(S)\varepsilon(T).
	\] 
	It follows that 
	\[
	\epsilon\left(S\xrightarrow[\beta]{Q}T \right)S\xrightarrow[\beta]{Q}T = \sum \varepsilon (M_i)M_i =\sum \varepsilon(S)\varepsilon(T)M_i = \varepsilon(S)\varepsilon(T)\sum M_i= \epsilon(S)\epsilon(T)S\xrightarrow[\beta]{Q'}T. 
	\]
	Similarly, we have
	\[
	\epsilon\left(T\xrightarrow[\beta]{Q}S \right)T\xrightarrow[\beta]{Q}S = \epsilon(S)\epsilon(T)T\xrightarrow[\beta]{Q'}S.
	\]
	This proves \eqref{eq:123} in this case. 
	
	\noindent \underline{Case 2:} Suppose that $\beta = \alpha$,  $S\xrightarrow[\alpha]{Q}T=\sum M_i$, and $T\xrightarrow[\alpha]{Q}S=\sum N_i$. In this case, since we glued $S$ and $T$ (or $T$ and $S$) through the arrow $\alpha$, under the bijection in Section \ref{subsection: reversing arrows}, we have
	\[
	S\xrightarrow[\alpha]{Q'}T=\sum N_i, \quad T\xrightarrow[\alpha]{Q'}S=\sum M_i. 
	\] 
	Moreover, for each $M_i$, we have
	\[
	\varepsilon(M_i)=(-1)^{|c_{S}^{-1}(\alpha)|+|c_{T}^{-1}(\alpha)|+1}=-\varepsilon(S)\varepsilon(T).
	\] 
	Likewise, 
	\[
	\varepsilon(N_i)=-\varepsilon(S)\varepsilon(T).
	\]
	It follows that
	\[
	\epsilon\left(S\xrightarrow[\beta]{Q}T \right)S\xrightarrow[\beta]{Q}T = \sum \varepsilon (M_i)M_i =\sum -\varepsilon(S)\varepsilon(T)M_i = -\epsilon(S)\epsilon(T)T\xrightarrow[\beta]{Q'}S. 
	\]
	Similarly, 
	\[
	\epsilon\left(T\xrightarrow[\beta]{Q}S \right)T\xrightarrow[\beta]{Q}S = -\epsilon(S)\epsilon(T)S\xrightarrow[\beta]{Q'}T. 
	\]
	This proves \eqref{eq:123} in this case.  
\end{proof}

\begin{rmk}
Theorem \ref{theorem: Lie main thm}(1) shows that $\mathfrak{n}$ and $H_{Q,\nil}$ are generated by the tree representations. Within the literature, such representations may be interpreted as unramified tree modules over path algebras, see for instance \cite{lorscheid2015schubert}. Various notions of tree modules have been studied in the literature, see for instance \cite{krause1991maps, ringel1998exceptional, weist2010tree, weist2012tree, kinser2013trees, lorscheid2015schubert}. Such representations are of interest because they are \emph{absolutely indecomposable}, i.e. $M \in \mathcal{T}_Q$ implies $k\otimes_{\mathbb{F}_1}M$ is indecomposable for any field $k$. The properties of absolutely indecomposable $\mathbb{F}_1$-representations will be the topic of future investigations.  
\end{rmk}

In light of the above remark, it is natural to pose the following question.

\begin{question}  
	For which connected quivers $Q$ is $\mathfrak{n}$ generated (as a Lie algebra) by the tree representations?
\end{question}




\section{Coverings and Contractions} \label{section: coverings and contractions}


\subsection{Covering of a quiver}

For a given quiver $Q$, inspired by covering theory of quivers, we construct a family of coefficient quivers $c:\Gamma \to Q$ which have a nice grading distinguishing vertices. We first recall the definition of a covering of a quiver. 

\begin{mydef}\cite[Section 3]{franzen2021rationality}
Let $c:\Gamma \to Q$ be a quiver map. $c$ is said to be a covering if the following hold:
\begin{enumerate}
	\item 
$c$ is surjective on vertices and arrows. 
\item 
For any $i \in Q_0$ and $k \in \Gamma_0$ with $c_0(k)=i$, we have the following bijections:
\begin{equation}\label{eq: bij1}
\{\beta \in \Gamma_1 \mid s(\beta)=k\} \xleftrightarrow{1-1} \{\alpha \in Q_1 \mid s(\alpha)=i\},
\end{equation}
\begin{equation}\label{eq: bij2}
	\{\beta \in \Gamma_1 \mid t(\beta)=k\} \xleftrightarrow{1-1} \{\alpha \in Q_1 \mid t(\alpha)=i\}
\end{equation}
\end{enumerate}
A universal cover of $Q$ is a cover $c:\Gamma \to Q$, where $\Gamma$ is a tree. 
\end{mydef}

One can easily see that a covering defines an $\FF_1$-representation.

\begin{lem}\label{lemma: covering is winding}
Any covering $c:\Gamma \to Q$ is a winding. 
\end{lem}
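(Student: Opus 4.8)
The statement to prove is that any covering $c : \Gamma \to Q$ is a winding, i.e.\ that for all $\alpha, \beta \in \Gamma_1$ with $\alpha \neq \beta$, the equality $c(\alpha) = c(\beta)$ forces $s(\alpha) \neq s(\beta)$ and $t(\alpha) \neq t(\beta)$. The plan is to argue by contradiction directly from the bijection conditions \eqref{eq: bij1} and \eqref{eq: bij2} in the definition of a covering.

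First I would suppose, for contradiction, that there exist distinct arrows $\alpha, \beta \in \Gamma_1$ with $c(\alpha) = c(\beta)$ but with $s(\alpha) = s(\beta)$ (the case $t(\alpha) = t(\beta)$ is entirely symmetric, using \eqref{eq: bij2} in place of \eqref{eq: bij1}). Write $k := s(\alpha) = s(\beta) \in \Gamma_0$ and $i := c_0(k) \in Q_0$; since $c$ is a quiver map, $s(c_1(\alpha)) = c_0(s(\alpha)) = c_0(k) = i$, so the arrow $c_1(\alpha) = c_1(\beta)$ lies in the set $\{\alpha' \in Q_1 \mid s(\alpha') = i\}$ appearing on the right-hand side of \eqref{eq: bij1}. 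Both $\alpha$ and $\beta$ lie in the set $\{\beta' \in \Gamma_1 \mid s(\beta') = k\}$ on the left-hand side, and both map to the same element $c_1(\alpha) = c_1(\beta)$ under the restriction of $c_1$. Now the key point is that the bijection \eqref{eq: bij1} is precisely the restriction of $c_1$ to $\{\beta' \in \Gamma_1 \mid s(\beta') = k\}$ — I should state this explicitly, since it is the content of the covering axiom that this restriction is a bijection, in particular injective. Injectivity then yields $\alpha = \beta$, contradicting our assumption.

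The target case is handled identically: if $t(\alpha) = t(\beta) =: k$, set $i := c_0(k)$, note $t(c_1(\alpha)) = c_0(t(\alpha)) = i$, and apply injectivity of the bijection \eqref{eq: bij2}, which is the restriction of $c_1$ to $\{\beta' \in \Gamma_1 \mid t(\beta') = k\}$, to conclude $\alpha = \beta$. This completes the proof.

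There is essentially no obstacle here; the only thing that requires a word of care is making sure the reader sees that the bijections \eqref{eq: bij1}, \eqref{eq: bij2} are realized by $c_1$ itself (this is implicit in the phrasing of the definition, where the arrow $\beta$ with $s(\beta)=k$ is matched with its image $c_1(\beta)$), so that their injectivity is exactly the separation property needed. Everything else is a direct unwinding of the definitions, and the proof is a few lines long.
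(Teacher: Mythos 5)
Your proof is correct and follows essentially the same route as the paper's: both deduce $\alpha=\beta$ from the injectivity of the bijection \eqref{eq: bij1} (resp.\ \eqref{eq: bij2}) restricted from $c_1$, handling source and target symmetrically. Your explicit remark that the bijections must be realized by $c_1$ itself is the only detail the paper leaves implicit, and it is the right reading of the definition.
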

\begin{proof}
Suppose that $c_1(\beta)=c_1(\beta')$ and $s(\beta)=s(\beta')=k$. Then, with $i=c_0(k)$, from \eqref{eq: bij1} we have $\beta=\beta'$. Likewise, when $t(\beta)=t(\beta')$, we have $\beta=\beta'$. 
\end{proof}

The following example shows that windings are strictly more general than coverings.

\begin{myeg}
\[
F:\Gamma=\left(
\begin{tikzcd}
	1 \arrow[r,"\alpha"] & 2 \arrow[r, "\beta"]& 3
\end{tikzcd}
\right)
\to Q=\left(
\begin{tikzcd}
	1 \arrow[r,"\alpha"] & 2 \arrow[r,shift right,swap, "\beta"] \arrow[r,shift left, "\gamma"]& 3
\end{tikzcd}
\right)
\]
In this case, we have $F_0(3)=3$, but $\{\beta \in \Gamma_1 \mid t(\beta)=3\} =\{\beta\}$ whereas $\{\alpha \in Q_1 \mid t(\alpha)=3)\}= \{\beta,\gamma\}$, in particular $F$ is not a covering. But, clearly $F$ is an winding.
\end{myeg}

The following is proven in \cite[Lemma 3]{franzen2021rationality}. 

\begin{lem}
Let $Q$ be a quiver. 
\begin{enumerate}
	\item 
Let $c:\Gamma \to Q$ be a universal cover and $d:D \to Q$ be any cover. Then, there exists a quiver map $p:\Gamma \to D$ such that $c=d\circ p$. 
\item 
If $Q$ is connected then $Q$ admits a universal cover. 
\end{enumerate}
\end{lem}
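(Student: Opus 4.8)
The plan is to reduce both statements to the classical covering theory of graphs, carried out at the level of walks. The single technical tool underlying everything is \emph{unique path lifting}: if $d:D\to Q$ is a covering, $p=\gamma_1^{\epsilon_1}\cdots\gamma_r^{\epsilon_r}$ is a walk in $Q$ (each $\gamma_i\in Q_1$, traversed forward if $\epsilon_i=+1$ and backward if $\epsilon_i=-1$) starting at $i\in Q_0$, and $k\in D_0$ satisfies $d_0(k)=i$, then there is a unique walk $\tilde p$ in $D$ starting at $k$ with $d(\tilde p)=p$. I would prove this by induction on $r$, lifting the next letter $\gamma_i^{\epsilon_i}$ by means of the bijection \eqref{eq: bij1} (when $\epsilon_i=+1$) or \eqref{eq: bij2} (when $\epsilon_i=-1$), each of which supplies the unique arrow of $D$ over $\gamma_i$ with the prescribed source, respectively target. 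The consequence I will lean on is that the lift of a backtracking walk backtracks as well, so lifting is compatible with reduction of walks; in particular the endpoint of $\tilde p$ depends only on the reduced form of $p$.

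For part (2), assume $Q$ is connected and fix a base vertex $q_0\in Q_0$. Let $\Gamma_0$ be the set of reduced walks in $Q$ based at $q_0$. For $w\in\Gamma_0$ ending at a vertex $v$ and each $\alpha\in Q_1$, I add an arrow $w\to w\alpha$ over $\alpha$ whenever $s(\alpha)=v$ and $w\alpha$ is still reduced, and an arrow $w\bar{\alpha}\to w$ over $\alpha$ whenever $t(\alpha)=v$ and $w\bar{\alpha}$ is reduced; I set $c_0(w)=v$ and let $c_1$ be the labeling just described, which is visibly a quiver map. Then I would verify, by a short case analysis on whether the last letter of $w$ is forward or backward: (i) $c$ is surjective on vertices and arrows, using connectedness of $Q$ to produce a reduced walk to any prescribed vertex; (ii) the local bijections \eqref{eq: bij1} and \eqref{eq: bij2} hold at every $w$ — the point is that $w\alpha$ fails to be reduced exactly when $w$ ends in $\bar{\alpha}$, in which case the ``delete the last letter'' arrow over $\alpha$ takes the place of the missing ``append $\alpha$'' arrow, so exactly one arrow of $\Gamma$ with source $w$ lies over each $\alpha$ with $s(\alpha)=v$, and dually for targets; (iii) $\Gamma$ is a tree — it is connected because a reduced walk $w$ is joined to the trivial walk by the edge path that reads off $w$ one letter at a time, and it is acyclic because reading off a reduced closed walk in $\Gamma$ based at the trivial walk recovers, by construction, the (necessarily trivial) reduced walk in $Q$ that it spells, so the trivial walk lies on no nontrivial reduced closed walk, and in a connected graph this forces there to be none at all. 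This exhibits the required universal cover $c:\Gamma\to Q$.

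For part (1), let $c:\Gamma\to Q$ be a universal cover (so $\Gamma$ is a tree, hence connected and simply connected) and $d:D\to Q$ any covering. Fix $x_0\in\Gamma_0$ and, using surjectivity of $d_0$, pick $y_0\in D_0$ with $d_0(y_0)=c_0(x_0)$. For $x\in\Gamma_0$, let $\rho_x$ be the unique reduced walk in $\Gamma$ from $x_0$ to $x$; then $c(\rho_x)$ is a walk in $Q$ from $d_0(y_0)$ to $c_0(x)$, and I define $p_0(x)$ to be the endpoint of its unique lift to $D$ starting at $y_0$. For an arrow $\beta\in\Gamma_1$ from $x$ to $x'$, I define $p_1(\beta)$ to be the unique arrow of $D$ over $c_1(\beta)$ with source $p_0(x)$ (via \eqref{eq: bij1}); its target is forced to be $p_0(x')$ because $\rho_x\beta$ reduces to $\rho_{x'}$ and lifting respects reduction. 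It is then immediate that $p=(p_0,p_1)$ is a quiver map and that $d\circ p=c$.

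The step I expect to be the main obstacle is establishing, in part (2), that the constructed $\Gamma$ is genuinely a tree: connectedness is easy, but acyclicity has to be extracted carefully from the reduced-walk bookkeeping, and the corresponding simple-connectedness of $\Gamma$ is precisely what guarantees in part (1) that $\rho_x$, hence $p_0(x)$, is well defined. Both of these, together with the well-definedness of the construction in part (1) more generally, rest on the unique path-lifting property and its compatibility with reduction recorded at the outset; the remaining verifications are routine, if slightly tedious, distinctions between forward and backward traversal of arrows.
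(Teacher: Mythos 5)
Your argument is correct. There is nothing internal to compare it against: the paper does not prove this lemma at all, but quotes it from \cite{franzen2021rationality} (Lemma 3 there). What you give is the standard covering theory of graphs adapted to quivers --- unique letter-by-letter lifting of walks via the local bijections \eqref{eq: bij1}--\eqref{eq: bij2}, the reduced-walk model for the universal cover, and the definition of $p$ by lifting $c(\rho_x)$ --- which is essentially the argument in the cited source and in the classical literature, so your write-up makes the lemma self-contained rather than outsourced. Two small points are worth recording explicitly. In the acyclicity step of part (2), the precise observation needed is that a non-backtracking closed walk in $\Gamma$ based at the trivial walk spells a \emph{reduced} walk in $Q$: a deletion step always removes the last letter of the current vertex, and since the edge of $\Gamma$ joining a reduced walk $w$ to $w$ with one letter appended or deleted is unique, an append immediately undone (or a deletion immediately re-done) would traverse the same edge twice, i.e.\ backtrack; granting this, returning to the trivial walk forces the spelled reduced walk to be empty, as you assert. (Alternatively: every vertex other than the trivial walk has exactly one neighbour of strictly smaller length, so a vertex of maximal length on a non-backtracking closed walk immediately gives a contradiction.) Finally, note that your $\Gamma$ is infinite whenever $\overline{Q}$ contains a cycle, so the lemma must be read in the category of not-necessarily-finite quivers; this caveat is inherent in the statement as the paper uses it (its own finite construction $\Gamma_e$ in Construction \ref{construction} is a different, truncated cover) and is not a defect of your proof.
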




The following proposition characterizes quivers whose coefficients quivers have a non-degenerate nice grading. 

\begin{pro}
Let $Q$ be a quiver. $Q$ has no loop at a vertex if and only if any winding $c:\Gamma \to Q$ has a non-degenerate nice grading. 
\end{pro}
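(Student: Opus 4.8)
The plan is to prove both directions by a direct combinatorial argument on the arrows of $\Gamma$. For the ``only if'' direction, assume $Q$ has no loops, and let $c:\Gamma \to Q$ be an arbitrary winding. I would first choose an enumeration of the vertices of $Q$, or more conceptually, pick any injective function $w:Q_0 \to \mathbb{Z}$ with the property that $w(i) \neq w(j)$ whenever $i \neq j$; since $Q$ has no loops, every arrow $\alpha \in Q_1$ satisfies $s(\alpha) \neq t(\alpha)$, so $w(s(\alpha)) \neq w(t(\alpha))$. The tentative definition of the grading is $\partial := w \circ c_0 : \Gamma_0 \to \mathbb{Z}$, i.e. $\partial(v) = w(c_0(v))$. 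I would then check the two required conditions. Niceness: if $\alpha,\beta \in \Gamma_1$ with $c_1(\alpha) = c_1(\beta) =: e$, then $c_0(s(\alpha)) = s(e) = c_0(s(\beta))$ and likewise for targets, so $\partial(s(\alpha)) = w(s(e)) = \partial(s(\beta))$ and $\partial(t(\alpha)) = w(t(e)) = \partial(t(\beta))$; hence the two differences $\partial(s(\alpha)) - \partial(t(\alpha))$ and $\partial(s(\beta)) - \partial(t(\beta))$ are literally equal, which is even stronger than \eqref{equation: winding} demands. Non-degeneracy: for any $\beta \in \Gamma_1$ with $c_1(\beta) = \alpha \in Q_1$, we have $\partial(t(\beta)) - \partial(s(\beta)) = w(t(\alpha)) - w(s(\alpha)) \neq 0$ since $t(\alpha) \neq s(\alpha)$ and $w$ is injective. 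This finishes one direction.

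For the ``if'' direction, I would argue by contrapositive: suppose $Q$ has a loop $\alpha$ at a vertex $i$, i.e. $s(\alpha) = t(\alpha) = i$. I need to produce a single winding $c:\Gamma \to Q$ admitting no non-degenerate nice grading. The natural candidate is to take $\Gamma$ to have two vertices $u_1, u_2$ (both mapping to $i$) and a single arrow from $u_1$ to $u_2$ colored $\alpha$ — but I should double-check this is genuinely a winding (it is: the winding condition is vacuous with only one $\alpha$-arrow) and, more importantly, that it admits no non-degenerate nice grading. Wait — with only one $\alpha$-arrow there is no constraint forcing degeneracy. The correct construction is to build $\Gamma$ containing a \emph{monochromatic oriented path} of $\alpha$-arrows, say $u_1 \xrightarrow{\alpha} u_2 \xrightarrow{\alpha} u_3$, on vertices all mapping to $i$; this is still a winding (the two $\alpha$-arrows have distinct sources and distinct targets). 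For a nice grading $\partial$, condition \eqref{equation: winding} applied to these two arrows with common color $\alpha$ gives $\partial(u_1) - \partial(u_2) = \partial(u_2) - \partial(u_3)$. More flexibly, I should use an $\alpha$-cycle to force degeneracy: take $\Gamma$ with vertices $u_1,\dots,u_m$ all mapping to $i$ and arrows $u_1 \xrightarrow{\alpha} u_2 \xrightarrow{\alpha} \cdots \xrightarrow{\alpha} u_m \xrightarrow{\alpha} u_1$ forming an oriented cycle (a valid winding). Then niceness forces $\partial(u_j) - \partial(u_{j+1})$ to be a constant $\delta$ independent of $j$, and summing around the cycle gives $m\delta = 0$, so $\delta = 0$; hence $\partial(t(\beta)) - \partial(s(\beta)) = -\delta = 0$ for each $\alpha$-arrow $\beta$, violating non-degeneracy. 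Thus this $\Gamma$ admits no non-degenerate nice grading, completing the contrapositive.

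The main obstacle I anticipate is getting the witness quiver in the converse direction exactly right: a single loop-colored arrow imposes no constraint, a path imposes only a ``constant slope'' constraint (which can be satisfied non-degenerately), so I need the \emph{cyclic} configuration to actually force the slope to be zero. I should also make sure that the $\alpha$-cycle on vertices all over $i$ really is a legitimate winding — it is, because any two distinct $\alpha$-arrows in the cycle have distinct sources and distinct targets — and that it really corresponds to an $\FF_1$-representation of $Q$ (a nilpotency issue does not arise since non-degenerate nice gradings are not required to come from acyclic $\Gamma$). A minor secondary point: in the forward direction I am implicitly using that a finite quiver $Q$ has only finitely many vertices, so an injective $w : Q_0 \to \mathbb{Z}$ exists trivially; no subtlety there. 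Everything else is a routine verification.
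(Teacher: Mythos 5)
Your proposal is correct and takes essentially the same route as the paper: the forward direction is exactly the paper's construction, namely pulling back an injective labelling of $Q_0$ along $c_0$, which is nice because arrows of the same colour have equal source-labels and equal target-labels, and non-degenerate because $Q$ has no loops. For the converse the paper's witness is even simpler than your monochromatic $m$-cycle: it just takes the inclusion of a subquiver of $Q$ containing the loop $\alpha$, which is a winding in which the loop-coloured arrow $\beta$ has $s(\beta)=t(\beta)$, so $\partial(t(\beta))-\partial(s(\beta))=0$ for \emph{every} grading and non-degeneracy fails immediately, with no telescoping argument needed (your cycle argument is also valid).
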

\begin{proof}
$(\Leftarrow):$ We first note $Q$ has no loop at a vertex if and only if $c:Q' \hookrightarrow Q$ has a non-degenerate nice grading for all subquivers $Q'$ of $Q$. Now, the first statement is clear as any inclusion $c:Q' \hookrightarrow Q$ is a winding. 

$(\Rightarrow):$ For a given winding $c:\Gamma \to Q$, we can directly define a nice grading $\partial$ on $\Gamma$ as follows. We first label vertices of $Q$ as $v_1,\dots, v_n$. For each vertex $w \in \Gamma_0$, we let $\partial(w)=i$ of and only if $c(w)=v_i$. Since $Q$ does not have a loop at any vertex, $\partial:\Gamma_0 \to \mathbb{Z}$ is a non-degenerate nice grading. 
\end{proof}

For any quiver $Q$, one can construct a winding $c:\Gamma \to Q$ which has a nice grading distinguishing vertices. This construction depends on a choice of an arrow of $Q$, and hence creates a family of windings distinguishing vertices. Note that this construction is a covering of $Q$, and any subquiver of $\Gamma$ has a nice grading distinguishing vertices. 

\begin{construction}\label{construction}
Let $Q$ be a quiver. Let's fix an arrow $e \in Q_1$ and we label vertices of $Q$ as $1,\dots, |Q_0|$. Let $\tilde{Q}_e$ be a quiver obtained from $Q$ by removing $e$ while keeping all vertices. Let $\ell \in \{1,\dots,|Q_0|-1\}$. Let $\Gamma_e$ be a quiver defined as follows:
\begin{equation}\label{eq:example constuction}
(\Gamma_e)_0:=\bigsqcup_{1\leq k \leq \ell} (\tilde{Q}_{e,k})_0, 
\end{equation}
where each $\tilde{Q}_{e,k}=\tilde{Q}_e$. For the arrows $(\Gamma_e)_1$, we keep all arrows of $\tilde{Q}_{e,k}$, and for each pair $(k,k+1)$, we add an arrow from a vertex $s(e)$ of $\tilde{Q}_{e,k}$ to $t(e)$ of $\tilde{Q}_{e,k+1}$, where we use the same labeling as $Q$.
\end{construction}

\begin{myeg}\label{example: family of ngdv}
Consider the following quiver (with the given labeling of vertices):
\[
Q=\begin{tikzcd}[column sep =0.7em]
 	&		4  \arrow[ddr,out=-20,in=65] & \\
	&	3 \arrow[dr] \arrow[u,"e"] & \\
	1 \arrow[ur] \arrow[rr]	& &2
\end{tikzcd}
\]
Then, $\Gamma_e$ as in \eqref{eq:example constuction} becomes the following:
\[
\Gamma_e=\begin{tikzcd}[column sep =0.9em]
	&		\bullet  \arrow[ddr,out=-20,in=65] & & & \bullet  \arrow[ddr,out=-20,in=65]&& & \bullet  \arrow[ddr,out=-20,in=65]&\\
	&	\bullet \arrow[dr] \arrow[rrru, red]& &  & \bullet \arrow[rrru, red] \arrow[dr]& & &\bullet \arrow[dr]& \\
	\bullet \arrow[ur] \arrow[rr]	& &\bullet &  \bullet \arrow[ur] \arrow[rr]&  & \bullet  & \bullet \arrow[ur] \arrow[rr]& & \bullet&
\end{tikzcd}
\]
where the red arrows are newly added for $\Gamma_e$. Now, there is a natural winding $c:\Gamma_e \to Q$. Namely, we send $\tilde{Q}_{e,k}$ to $Q$ ``identically'', and send the red arrows to $e$. 
\end{myeg}

One can easily see, as in Example \ref{example: family of ngdv}, that there is a winding $c:\Gamma_e \to Q$ for any quiver $Q$, where $\Gamma_e$ is as in \eqref{eq:example constuction}. In fact, the following is straightforward from the definition. 

\begin{pro}
Let $Q$ be a quiver, and $c:\Gamma_e \to q$ be a natural winding as explained above. Then $\Gamma_e$ is a covering of $Q$. 
\end{pro}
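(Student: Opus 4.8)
The plan is to check the two defining conditions of a covering directly for the natural quiver map $c\colon\Gamma_e\to Q$ of Construction \ref{construction}: first, that $c$ is surjective on vertices and on arrows; second, the local bijection conditions \eqref{eq: bij1}--\eqref{eq: bij2}, namely that for every $i\in Q_0$ and every $k\in(\Gamma_e)_0$ with $c_0(k)=i$ the map $c_1$ restricts to bijections between the set of arrows of $\Gamma_e$ out of (resp.\ into) $k$ and the set of arrows of $Q$ out of (resp.\ into) $i$. We already know $c$ is a winding, but that is a consequence of being a covering by Lemma \ref{lemma: covering is winding}, not something used below.

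Surjectivity is immediate. Since $(\Gamma_e)_0=\bigsqcup_{1\le k\le\ell}(\tilde Q_{e,k})_0$ with each $\tilde Q_{e,k}$ a copy of $\tilde Q_e$, and $\tilde Q_e$ has the same vertex set as $Q$, the restriction of $c_0$ to any single copy is already a bijection onto $Q_0$. The arrows of $\Gamma_e$ are the internal arrows of the copies $\tilde Q_{e,k}$, which $c_1$ carries onto $Q_1\setminus\{e\}$ (the arrow set of $\tilde Q_e$), together with the arrows joining consecutive copies, which $c_1$ sends to $e$; so all of $Q_1$ is hit. For the local condition, fix $i$ and let $k$ be the vertex labelled $i$ in the copy $\tilde Q_{e,m}$. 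The arrows of $\Gamma_e$ with source $k$ are of two kinds: (i) the internal arrows of $\tilde Q_{e,m}$ out of $k$, which $c_1$ maps bijectively onto $\{\alpha\in Q_1\setminus\{e\}:s(\alpha)=i\}$ since $\tilde Q_{e,m}$ is an isomorphic copy of $\tilde Q_e$; and (ii) the added arrows out of $k$, of which there is exactly one --- running to the $t(e)$-labelled vertex of the neighbouring copy --- when $i=s(e)$, and none otherwise, each sent by $c_1$ to $e$. Combining (i) and (ii) yields a bijection onto $\{\alpha\in Q_1:s(\alpha)=i\}$: if $i\neq s(e)$ this set excludes $e$ and only (i) contributes, while if $i=s(e)$ the single arrow of (ii) matches the single extra element $e$. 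The statement for incoming arrows is proved identically, with $s$ and $s(e)$ replaced by $t$ and $t(e)$. This gives both conditions, so $c$ is a covering.

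The step needing real care --- and the one I expect to be the main obstacle --- is (ii): one must read off from the layout of the arrows between consecutive copies $\tilde Q_{e,k}$ and $\tilde Q_{e,k+1}$ in Construction \ref{construction} that \emph{every} vertex of $\Gamma_e$ over $s(e)$ emits exactly one added arrow and \emph{every} vertex over $t(e)$ receives exactly one, so that the arrow $e$ absent from the internal copies is restored bijectively in each local star; in particular one should double-check the vertices of the first and last copy. Everything else reduces to the fact that each $\tilde Q_{e,k}$ is an isomorphic copy of $\tilde Q_e$, so the verification is otherwise pure bookkeeping.
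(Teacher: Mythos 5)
Your step (ii) is not a routine bookkeeping check: it is exactly where the argument breaks down, and your closing caveat about ``the first and last copy'' identifies a genuine obstruction rather than a detail to be verified. In Construction \ref{construction} the added ($e$-coloured) arrows only join consecutive copies, i.e.\ they run from the $s(e)$-vertex of $\tilde{Q}_{e,k}$ to the $t(e)$-vertex of $\tilde{Q}_{e,k+1}$ for $1\le k\le \ell-1$; there is no added arrow into the $t(e)$-labelled vertex of the first copy and no added arrow out of the $s(e)$-labelled vertex of the last copy. Consequently the claim you assert in the body of the proof --- that \emph{every} vertex over $s(e)$ emits exactly one added arrow and \emph{every} vertex over $t(e)$ receives exactly one --- is false at these two boundary vertices, and the sets in \eqref{eq: bij1} and \eqref{eq: bij2} there differ in cardinality by one (the missing element being $e$). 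Concretely, in Example \ref{example: family of ngdv} the vertex labelled $4$ in the first copy has no incoming arrow at all, while the vertex $4$ of $Q$ has the incoming arrow $e$; dually, the vertex $3$ of the last copy has only one outgoing arrow while $3$ has two in $Q$. A further degenerate case: if $\ell=1$ there are no added arrows whatsoever, so even surjectivity on arrows (which you call immediate) fails, since nothing maps to $e$.

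So the proposal does not establish the local bijection conditions under the paper's literal definition of a covering, and it cannot, because for the construction as written those conditions genuinely fail at the extremal copies; one would have to either modify the construction (e.g.\ add a wraparound $e$-coloured arrow from the $s(e)$-vertex of $\tilde{Q}_{e,\ell}$ to the $t(e)$-vertex of $\tilde{Q}_{e,1}$, producing a finite cyclic cover) or relax the covering condition at the two boundary vertices. The paper itself gives no argument to compare against --- the proposition is stated as ``straightforward from the definition'' with no proof --- so your careful attempt is valuable precisely because it surfaces this discrepancy; but as a proof of the statement as written, it has a gap that is not fixable without changing either the construction or the definition.
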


Our next step is to equip $\Gamma_e$ is a nice grading distinguishing vertices as follows:
\begin{enumerate}
	\item 
Let $L$ be labeling of the vertices of $Q$. We assign an integer $m(\alpha)$ to each arrow $\alpha \in Q_1$ as $L(t(\alpha)) - L(t(\alpha))$ except $e$. We assign a ``large enough number'' to $e$ so that we can assign different integers to the vertices of the copies of $\tilde{Q}_e$ (see Example \ref{example: covering example}).
\item 
Now, we define $\partial:(\Gamma_e)_0 \to \mathbb{Z}$. For $\tilde{Q}_{e,1}$, we use $L$, i.e., for each $u \in (\tilde{Q}_{e,1})_0$, we define $\partial(u)=L(u)$. For $\tilde{Q}_{e,2}$, we let $\partial(t(e))=\partial(s(e))+m(e)$. Then, we define $\partial$ for the remaining vertices of $\tilde{Q}_{e,2}$ by using $m(\alpha)$ so that $m(\alpha)=\partial(t(\alpha)) - \partial(s(\alpha))$. 
\item 
Then, one iterate this process to define $\partial:(\Gamma_e)_0 \to \mathbb{Z}$. 
\end{enumerate}

\begin{myeg}\label{example: covering example}
The following is the case when $e=9$ from Example \ref{example: family of ngdv}. 
\[
\Gamma=\begin{tikzcd}[column sep =0.9em]
	&		4  \arrow[ddr,out=-20,in=65] & & & 12 \arrow[ddr,out=-20,in=65]&& & 20 \arrow[ddr,out=-20,in=65]&\\
	&	3 \arrow[dr] \arrow[rrru]& &  & 11 \arrow[rrru] \arrow[dr]& & &19 \arrow[dr]& \\
	1 \arrow[ur] \arrow[rr]	& &2 &  9 \arrow[ur] \arrow[rr]&  & 10 & 17 \arrow[ur] \arrow[rr]& & 18&
\end{tikzcd}
\]
\end{myeg}

From the above, we have the following proposition. 

\begin{pro}\label{proposition: main proposition}
Let $Q$ be a quiver, $e \in Q_1$, and $\Gamma_e$ be as in \eqref{eq:example constuction}. Then, the natural winding map $c:\Gamma_e \to Q$ has a nice grading which distinguishes vertices. 
\end{pro}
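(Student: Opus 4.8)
The plan is to write down the grading $\partial$ by an explicit closed formula and then check the two defining properties by hand. Fix the vertex labeling $L : Q_0 \to \{1,\dots,|Q_0|\}$ used in Construction \ref{construction}, and for $u \in Q_0$ and $1 \le k \le \ell$ write $u^{(k)}$ for the copy of $u$ in the $k$-th block $\tilde{Q}_{e,k}$ of \eqref{eq:example constuction}. Let $M$ denote the ``large enough'' integer implicit in the choice of $m(e)$ (so $m(e) = M + L(t(e)) - L(s(e))$), and set
\[
\partial(u^{(k)}) := L(u) + (k-1)M .
\]
First I would note that $c : \Gamma_e \to Q$ is a winding: if $\alpha \ne \beta$ are arrows of $\Gamma_e$ with $c(\alpha)=c(\beta)$, then either they are two distinct copies $\gamma^{(k)},\gamma^{(j)}$ of a single non-$e$ arrow $\gamma$ of $Q$, or they are two distinct connecting arrows $s(e)^{(k)}\to t(e)^{(k+1)}$; in both cases $k \ne j$, so the two arrows have sources (and targets) lying in different blocks and hence distinct. (Alternatively, invoke the preceding proposition that $\Gamma_e$ is a covering together with Lemma \ref{lemma: covering is winding}.) So the notion of a nice grading makes sense for $c$.

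Next I would verify that $\partial$ is a nice grading. For an arrow $\alpha \in Q_1$ with $\alpha \ne e$, its preimages under $c$ are precisely the arrows $\alpha^{(k)} : s(\alpha)^{(k)} \to t(\alpha)^{(k)}$, $1 \le k \le \ell$, and
\[
\partial(s(\alpha^{(k)})) - \partial(t(\alpha^{(k)})) = L(s(\alpha)) - L(t(\alpha)),
\]
independent of $k$. For $\alpha = e$, the preimages are the connecting arrows $\delta_k : s(e)^{(k)} \to t(e)^{(k+1)}$, $1 \le k \le \ell-1$, and
\[
\partial(s(\delta_k)) - \partial(t(\delta_k)) = L(s(e)) - L(t(e)) - M ,
\]
again independent of $k$. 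Since every arrow of $\Gamma_e$ belongs to exactly one of these two families, condition \eqref{equation: winding} holds, so $\partial$ is a nice grading; it is in fact the grading obtained from the recursive recipe following Construction \ref{construction}, since that recipe is characterized by $\partial(t(\beta)) - \partial(s(\beta)) = m(c(\beta))$ for every arrow $\beta$ of $\Gamma_e$, which is exactly what the two displays above record.

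Finally --- and this is the only place where the size of $m(e)$ matters, hence the crux of the argument --- I would show $\partial$ distinguishes vertices once $M$ is large. If $\partial(u^{(k)}) = \partial(v^{(j)})$ then $L(u) - L(v) = (j-k)M$; but $|L(u)-L(v)| \le |Q_0|-1$ while $|j-k| \le \ell - 1 \le |Q_0|-2$, so taking $M \ge |Q_0|$ forces $j=k$, whence $L(u)=L(v)$ and thus $u=v$ because $L$ is injective on $Q_0$. Therefore $\partial$ is injective on $(\Gamma_e)_0$, which is precisely the definition of distinguishing vertices. I do not expect a genuine obstacle here: the work is entirely bookkeeping, the two small pitfalls being (i) pinning down how large $m(e)$ (equivalently $M$) must be, and (ii) confirming that the closed formula for $\partial$ agrees with the inductive description in the text --- both handled above.
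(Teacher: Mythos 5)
Your proof is correct and follows essentially the same route as the paper: it uses exactly the grading from Construction \ref{construction} (your closed formula $\partial(u^{(k)})=L(u)+(k-1)M$ reproduces the paper's recursive recipe, e.g.\ Example \ref{example: covering example} with $M=8$), verifies niceness on the two families of arrows, and checks injectivity on vertices. The only difference is that you make the paper's ``large enough'' choice of $m(e)$ quantitative (any $M\geq |Q_0|$ works), which is a welcome but not substantively different refinement.
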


\begin{rmk}
	In fact, one could iterate Construction \ref{construction} as much as one wants. Also, one could remove more edges (rather than just one edge) to do the same construction under which Proposition \ref{proposition: main proposition} still holds. 	
\end{rmk}

\subsection{Contraction of coefficient quivers}

Next, we investigate how contractions and restrictions play a role in constructing a nice sequence distinguishing vertices. Let $Q$ be a quiver, and $c:\Gamma \to Q$ a winding. Let $A \subseteq Q_1$. Then, we have an induced winding $c|_A:\Gamma[c^{-1}(A)] \to Q$, where $\Gamma[c^{-1}(A)]$ is the induced subgraph on $c^{-1}(A)$. Also we have the following map:
\begin{equation}\label{eq: quotient map}
c/A:\Gamma/c^{-1}(A) \to Q/A. 
\end{equation}
The following example shows that \eqref{eq: quotient map} does not have to be a winding in general. 

\begin{myeg}
	\[
c:\Gamma=\left(
\begin{tikzcd}
	\bullet \arrow[r,"\alpha"] \arrow[d,swap,"\beta"] & \bullet \arrow[d,"\beta"] \\ 
	\bullet \arrow[r,swap,"\alpha"] & \bullet
\end{tikzcd}
\right)
\longrightarrow Q=\left(
\begin{tikzcd}
\bullet \arrow[out =330,in=390,loop,swap,"\beta"] \arrow[out=210,in=150,loop,"\alpha"]
\end{tikzcd}
\right)
\]
Let $A=\{\beta\}$. Then, we have the following:
\[
c/A:\Gamma/c^{-1}(A)= \left(
\begin{tikzcd}
 \bullet \arrow[r,shift right,swap, "\alpha"] \arrow[r,shift left, "\alpha"]& \bullet
\end{tikzcd}
\right) \longrightarrow Q/A=\left(\begin{tikzcd}
 \bullet \arrow[out=210,in=150,loop, "\alpha"] 
\end{tikzcd}\right)
\]
which is not an winding. 
\end{myeg}

Let $Q$ be a quiver, and $c:\Gamma \to Q$ a winding. For a nice grading $\partial:\Gamma_0 \to \mathbb{Z}$, one can obtain a function $\hat{\partial}:\Gamma_1 \to \mathbb{Z}$ defined by $\hat{\partial}(\alpha)=\partial (t(\alpha)) - \partial(s(\alpha))$ for $\alpha \in \Gamma_1$. The function $\hat{\partial}$ satisfies the condition \eqref{equation: winding}. Conversely, for a function $\hat{\partial}:\Gamma_1 \to \mathbb{Z}$ satisfying the condition \eqref{equation: winding}, one can find a corresponding nice grading $\partial$ up to addition of a constant. More generally, for a given nice sequence $\partial=\{\partial_i\}_{i \in \mathbb{N}}$, we obtain a sequence of functions $\{\hat{\partial}_i\}_{i \in \mathbb{N}}$, where $\hat{\partial}_i:\Gamma_1 \to \mathbb{Z}$ for each $i \in \mathbb{N}$ and they satisfies the condition $(3)$ in Definition \ref{definition: winding}. Conversely, a sequence of functions $\{\hat{\partial}_i\}_{i \in \mathbb{N}}$ recovers $\partial=\{\partial_i\}_{i \in \mathbb{N}}$ up to addition of constants.

\begin{mydef}
With the same notation as above, we say that $\partial=\{\partial_0,\dots,\partial_n\}$ frees $\alpha \in \Gamma_1$ if we can define a $\{\partial_0,\dots,\partial_n\}$-nice grading $\partial_{n+1}$ such that there is no $\beta \in \Gamma_1$ such that $c(\alpha)=c(\beta)$, $\partial_i(s(\alpha))=\partial_i(s(\beta))$, and $\partial_i(t(\alpha))=\partial_i(t(\beta))$ for $i=0,\dots,n+1$.
\end{mydef}

\begin{myeg}
	Let $Q = \wild_2$, with arrow set $Q_1 = \{\alpha_1,\alpha_2\}$. Let $M$ be the representation with quiver 
\[ 
\Gamma_M = 
\begin{tikzcd} 
	\bullet \arrow[r,"\alpha_2"] & \bullet \arrow[r,"\alpha_1"] & \bullet & \bullet \arrow[l,"\alpha_2",swap] & \bullet \arrow[l,"\alpha_1",swap] 
\end{tikzcd}
\]
In this case, any nice grading frees $\alpha_2$. 
\end{myeg}

\begin{rmk}\label{remark: free remark}
Let $c:\Gamma \to Q$ be a winding, and $\partial=\{\partial_0,\dots,\partial_n\}$ be a nice sequence distinguishing vertices. Then, $\partial$ frees all arrows in $\Gamma_1$ since the condition \eqref{eq: condition} cannot happen in this case. Conversely, if a nice sequence $\partial=\{\partial_0,\dots,\partial_n\}$ frees all arrows in $\Gamma_1$, then $\partial$ distinguishes vertices. 
\end{rmk}

\begin{pro}\label{proposition: contraction, restriction}
Let $Q$ be a quiver, and $c:\Gamma \to Q$ a winding. Let $A \subseteq Q_1$ such that
\[
c/A:\Gamma/c^{-1}(A) \to Q/A \quad \textrm{and} \quad c|_A:\Gamma[c^{-1}(A)] \to Q
\]
are windings which have a nice sequence distinguishing vertices. Then, $\Gamma$ has a nice sequence distinguishing vertices. 
\end{pro}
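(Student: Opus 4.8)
The plan is to produce a nice sequence on $\Gamma$ by concatenating a sequence pulled back from the contraction with the given sequence on the restriction. Fix a nice sequence $\{\bar\partial_0,\dots,\bar\partial_n\}$ on $\Gamma/c^{-1}(A)$ distinguishing its vertices, and a nice sequence $\{\partial'_0,\dots,\partial'_m\}$ on $\Gamma[c^{-1}(A)]$ distinguishing its vertices. Write $\pi:\Gamma_0\to(\Gamma/c^{-1}(A))_0$ for the map sending a vertex to its $c^{-1}(A)$-connected component, and set $\partial_i:=\bar\partial_i\circ\pi$ for $0\le i\le n$. First I would check that $\{\partial_0,\dots,\partial_n\}$ is a nice sequence on $\Gamma$: if $c(\alpha)=c(\beta)\in A$ then $\pi(s(\alpha))=\pi(t(\alpha))$ and $\pi(s(\beta))=\pi(t(\beta))$, so $\hat\partial_i(\alpha)=0=\hat\partial_i(\beta)$; if $c(\alpha)=c(\beta)\notin A$ then $\alpha,\beta$ descend to arrows $\bar\alpha,\bar\beta$ of $\Gamma/c^{-1}(A)$ with $(c/A)(\bar\alpha)=(c/A)(\bar\beta)$, and the hypotheses on the $\partial_{i'}$ translate verbatim into those on the $\bar\partial_{i'}$, so niceness of $\{\bar\partial_i\}$ gives $\hat\partial_i(\alpha)=\hat\partial_i(\beta)$; the same two cases show that $\partial_0$ is a genuine nice grading.

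Next I would form the concatenation $\{\partial_0,\dots,\partial_n,\partial'_0,\dots,\partial'_m\}$ and argue it remains a nice sequence on $\Gamma$. Since the head is already nice, it suffices to check that each $\partial'_j$ is nice relative to all of $\partial_0,\dots,\partial_n,\partial'_0,\dots,\partial'_{j-1}$. So let $\alpha\ne\beta\in\Gamma_1$ with $c(\alpha)=c(\beta)$ whose sources and targets agree under every earlier grading. If $c(\alpha)=c(\beta)\in A$, then $\alpha,\beta$ are arrows of $\Gamma[c^{-1}(A)]$ whose sources and targets agree under $\partial'_0,\dots,\partial'_{j-1}$, so niceness of $\{\partial'_j\}$ on $\Gamma[c^{-1}(A)]$ yields $\hat\partial'_j(\alpha)=\hat\partial'_j(\beta)$. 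If instead $c(\alpha)=c(\beta)\notin A$, then because the sources agree under every $\partial_i=\bar\partial_i\circ\pi$ and $\{\bar\partial_i\}$ distinguishes the vertices of $\Gamma/c^{-1}(A)$, we get $\pi(s(\alpha))=\pi(s(\beta))$, and likewise $\pi(t(\alpha))=\pi(t(\beta))$; hence $\bar\alpha$ and $\bar\beta$ are arrows of $\Gamma/c^{-1}(A)$ with the same color, the same source and the same target, and since $c/A$ is a \emph{winding} this forces $\bar\alpha=\bar\beta$, whence $\alpha=\beta$, a contradiction. So this case is vacuous, and the niceness condition holds.

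I expect this last step to be the crux of the argument: the whole point of prepending the \emph{entire} distinguishing sequence of $\Gamma/c^{-1}(A)$ is that it collapses condition \eqref{eq: condition} so that, for the appended gradings, it can only be activated by $A$-colored arrows — and there the restriction's sequence already controls the relevant differences. This is also the one place where the winding hypothesis on $c/A$ (rather than it being merely a quiver map) is genuinely used; without it, two distinct parallel arrows of the same color in $\Gamma/c^{-1}(A)$ could make the $\notin A$ case non-vacuous and break the argument.

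Finally, the concatenated sequence distinguishes the vertices of $\Gamma$: its tail $\{\partial'_0,\dots,\partial'_m\}$ already does, since $\Gamma[c^{-1}(A)]$ has vertex set $\Gamma_0$ (and any vertex lying on no $A$-colored arrow is a singleton $c^{-1}(A)$-component, hence separated from all others by the head). Equivalently, by Remark \ref{remark: free remark}, the head frees every non-$A$-colored arrow of $\Gamma$ while the tail frees every $A$-colored arrow, so the whole sequence frees all of $\Gamma_1$ and therefore distinguishes vertices. The remaining points — that distinct non-$A$-colored arrows of $\Gamma$ stay distinct after contraction, and the routine bookkeeping of descending arrows and gradings through $\pi$ — are straightforward.
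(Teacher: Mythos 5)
Your proposal is correct and follows essentially the same route as the paper: lift the contraction's distinguishing sequence to $\Gamma$ (constant on $c^{-1}(A)$, i.e.\ zero edge-differences there), append the restriction's sequence, and observe that for the appended gradings condition \eqref{eq: condition} can only be triggered by $A$-colored arrows — which is exactly the paper's statement that the lifted head ``frees'' the arrows outside $c^{-1}(A)$, with your use of the winding hypothesis on $c/A$ making that step explicit. The only (harmless) divergence is at the end: the paper appends one extra grading to separate the two vertex classes and invokes Remark \ref{remark: free remark}, whereas you distinguish vertices directly via the singleton-component observation.
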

\begin{proof}
Let $\partial=\{\partial_0,\dots,\partial_k\}$ be a nice sequence distinguishing vertices for $c/A:\Gamma/c^{-1}(A) \to Q/A$. Each $\partial_i$ can be lifted to define a grading $\hat{\partial}_i:\Gamma_1 \to \mathbb{Z}$ by setting $\hat{\partial}_i(\alpha)=0$ for any $\alpha \in c^{-1}(A)$. Let $\partial_i':\Gamma_0 \to \mathbb{Z}$ be a grading obtained by $\hat{\partial}_i$. In this case, one can easily check that the lifted sequence of gradings $\partial' =\{\partial_0',\dots,\partial_k'\}$ is a nice sequence which frees the arrows of $\Gamma$ which are not in $c^{-1}(A)$. 

Now, we restrict $\Gamma$ to $\Gamma[c^{-1}(A)]$. From the assumption, we obtain a nice sequence distinguishing vertices $\{\partial_{k+1},\dots,\partial_n\}$. Again, this sequence can be lifted to define a nice sequence $\partial'' =\{\partial_{k+1}',\dots,\partial_n'\}$ which frees the arrows in $c^{-1}(A)$. In particular, by adding one more grading if needed, we may assume that $\partial_n(u) \neq \partial_n(v)$ for any vertices $u$ of $\Gamma/c^{-1}(A)$ and $v \in \Gamma[c^{-1}(A)]$. Now, one can easily check that $(\partial_0',\dots,\partial_n')$ is a nice sequence distinguishing vertices. 
\end{proof}

If $c:\Gamma \to Q$, $c/A:\Gamma/c^{-1}(A) \to Q/A$, and $c|_A:\Gamma[c^{-1}(A)]	\to Q$ are windings and $\Gamma$ has a nice sequence distinguishing vertices, then does $c/A$ also have a nice sequence distinguishing vertices? The following example shows that it is not the case. 

\begin{myeg}
Consider the following quivers $Q$ and $\Gamma$. 
\[
	\Gamma=\begin{tikzcd}[row sep=0.3cm, column sep=0.3cm]
		 & \bullet \arrow[r,"\alpha"] & \bullet \arrow[r,"1"] & \bullet \arrow[r,"2"] & \bullet \arrow[r,"3"] & \bullet \arrow[r,"\beta"] &\bullet \arrow[dr,"6"] & \\
		  &  &  &  &&& &\bullet \arrow[d,"5"]  \\
		\bullet \arrow[uur,"\gamma"] &  &  &  &&&& \bullet \arrow[d,"4"] \\
		\bullet  \arrow[u,"4"]&  &  & & &&& \bullet \arrow[d,"\gamma"] \\
		\bullet  \arrow[u,"5"]&  &  &  &&&& \bullet  \arrow[d,"7"]\\
		\bullet  \arrow[u,"6"]&  &  &  &&&& \bullet \arrow[ld,"8"] \\
	 & \bullet \arrow[lu,"\beta"]  & \bullet \arrow[l,"3"]  & \bullet \arrow[l,"2"]& \bullet \arrow[l,"1"] & \bullet \arrow[l,"\alpha"]&\bullet \arrow[l,"9"]&
	\end{tikzcd} 
\]
\vspace{0.3cm}

\[
Q=\begin{tikzcd}
	& \bullet \arrow[d,"9"]&  &\bullet \arrow[rd,"2"]& \\
\bullet \arrow[ru,"8"] & \bullet \arrow[l,"7"] \arrow[rr,"\alpha"] & & \bullet \arrow[u,"1"] \arrow[dl,"\beta"] & \bullet \arrow[l,"3"] \\
& & \bullet \arrow[ul,"\gamma"] \arrow[rd,"6"] & & \\
	& \bullet \arrow[ur,"4"] &  &\bullet \arrow[ll,"5"]  &
\end{tikzcd}
\]
Let $c:\Gamma \to Q$ be an winding sending each arrow of $\Gamma$ to the corresponding arrow in $Q$. From \cite[Lemma 5.18]{jun2021coefficient}, $c$ has a nice sequence distinguishing vertices. Let $A=\{7,8,9\}$. Clearly, $c|_A$ is a winding with a nice sequence distinguishing vertices. However, one can easily check that $c/A$ is a winding which does not have a nice sequence distinguishing vertices (again from \cite[Lemma 5.18]{jun2021coefficient}).
\end{myeg}


\bibliography{quiver}\bibliographystyle{alpha}

\end{document}